\newcolumntype{L}[1]{>{\raggedright\let\newline\\\arraybackslash\hspace{0pt}}m{#1}}
\newcolumntype{C}[1]{>{\centering\let\newline\\\arraybackslash\hspace{0pt}}m{#1}}
\newcolumntype{R}[1]{>{\raggedleft\let\newline\\\arraybackslash\hspace{0pt}}m{#1}}
\definecolor{myBlue}{RGB}{30,144,255} 
\definecolor{myGreen}{RGB}{69,169,0} 
\definecolor{myRed}{RGB}{165,12,42} 
\definecolor{myOrange}{RGB}{225,92,22} 
\newcommand{\cC}{\mathcal{C}}
\newcommand{\cQ}{\mathcal{Q}}
\newcommand{\cL}{\mathcal{L}}
\newcommand{\cS}{\mathcal{S}}
\newcommand{\cX}{\mathcal{X}}
\newcommand{\cT}{\mathcal{T}}
\newcommand{\dd}{\textrm{d}}
\newcommand{\hb}{\hat{b}}
\newcommand{\he}{\hat{e}}
\newcommand{\be}{\bar{e}}
\newcommand{\bE}{\bar{E}}
\newcommand{\mR}{\mathbb{R}}
\newcommand{\tu}{\tilde{u}}
\newcommand{\tU}{\tilde{U}}
\newcommand{\vx}{\vct{x}}
\newcommand{\vvf}{\vvct{f}}
\newcommand{\vvx}{\vvct{x}}
\newcommand{\vvy}{\vvct{y}}
\newcommand{\vvu}{\vvct{u}}
\newcommand{\dx}{\;\dd\vx}
\newcommand{\dt}{\;\dd t}
\def\Nb{\mathrm{N}}
\newcommand{\vct}[1]{\mathbf{#1}}
\newcommand{\vvct}[1]{\underline{#1}}
\newtheorem{thm}{Theorem}[section]
\newtheorem{lem}[thm]{Lemma}
\newtheorem{cor}[thm]{Corollary}
\newtheorem{rem}[thm]{Remark}
\newcommand\addplotgraphicsnatural[2][]{%
	\begingroup
	\pgfqkeys{/pgfplots/plot graphics}{#1}%
	\setbox0=\hbox{\includegraphics{#2}}%
	%
	\pgfmathparse{\wd0/(\pgfkeysvalueof{/pgfplots/plot graphics/xmax} - \pgfkeysvalueof{/pgfplots/plot graphics/xmin})}%
	\let\xunit=\pgfmathresult
	\pgfmathparse{\ht0/(\pgfkeysvalueof{/pgfplots/plot graphics/ymax} - \pgfkeysvalueof{/pgfplots/plot graphics/ymin})}%
	\let\yunit=\pgfmathresult
	%
	\xdef\marshal{%
		\noexpand\pgfplotsset{unit vector ratio={\xunit\space \yunit}}%
	}%
	\endgroup
	%
	\marshal
	\addplot graphics[#1] {#2};
}   
\begin{document}

\title[Fast Mass Lumped Multiscale Wave Propagation Modelling]{Fast Mass Lumped Multiscale Wave Propagation Modelling}
\author[S.~Geevers and R.~Maier]{Sjoerd Geevers$^\dagger$, Roland Maier$^\ddagger$}
\date{\today}
\address{${}^{\dagger}$ Faculty of Mathematics, University of Vienna, Oskar-Morgenstern-Platz 1, 1090 Vienna, Austria}
\email{sjoerd.geevers@univie.ac.at}
\address{${}^{\ddagger}$ Department of Mathematical Sciences, Chalmers University of Technology and University of Gothenburg, Chalmers Tv\"argata 3, 412 96 G\"oteborg, Sweden}
\email{roland.maier@chalmers.se}

%
%
\begin{abstract}
In this paper, we investigate the use of a mass lumped fully explicit time stepping scheme for the discretisation of the wave equation with underlying material parameters that vary at arbitrarily fine scales.
We combine the leapfrog scheme for the temporal discretisation with the multiscale technique known as Localized Orthogonal Decomposition for the spatial discretisation.
To speed up the method and to make it fully explicit, a special mass lumping approach is introduced that relies on an appropriate interpolation operator. 
This operator is also employed in the construction of the Localized Orthogonal Decomposition and is a key feature of the approach. 
We prove that the method converges with second order in the energy norm, with a leading constant that does not depend on the scales at which the material parameters vary.
We also illustrate the performance of the mass lumped method in a set of numerical experiments.
\end{abstract}

\maketitle

{\small\textbf{Keywords.} explicit time stepping, mass lumping, wave equation,  multiscale method
}

{\small\textbf{AMS subject classification.}
	65M12, 
	65M60, 
	35L05  
}

\section{Introduction}

Explicit finite element time stepping schemes are popular methods for wave propagation modelling due to their efficiency, scalability, and ease of implementation. However, the time step size of such schemes is dictated by the size of the smallest element in the spatial discretisation. This condition becomes very restrictive when the spatial material parameters vary at a very fine scale, since classical finite element require a very fine mesh to resolve the information at these fine scales. 

Multiscale methods can circumvent this problem by preprocessing the spatial material parameters and/or the finite element approximation spaces in a sophisticated manner. The actual simulation can then be performed accurately on a coarse mesh, with a mesh size that no longer depends on the fine scales of the spatial parameters. This significantly reduces the number of degrees of freedom and number of time steps of the actual simulation.

In the context of multiscale wave propagation, the Heterogeneous Multiscale Method (HMM) \cite{EE03,EE05} has been used for the spatial discretisation in~\cite{AbdG11,EngHR11} and in~\cite{EngHR12,AbdGS14,ArjR14}, where the latter focus on long-time wave propagation.
However, the HMM is an example of a multiscale method based on analytical homogenisation theory and therefore requires structural assumptions such as scale separation and periodicity. 
To avoid these assumptions, so-called numerical homogenisation approaches emerged; see, e.g.,~\cite{AltHP21} for an overview. These methods do not rely on results from homogenisation theory and provably work for minimal structural assumptions. In the context of the wave equation, this has for instance been examined in~\cite{OwhZ08}. The approach therein, however, requires a set of assumptions (Cordes-type conditions) that are hard to verify and 
the method has a large computational overhead due to the global auxiliary (but time-independent) problems that need to be solved. Similarly, also the approach in~\cite{JiaEG10,JiaE12} is based on auxiliary global problems that can be precomputed. 
The method uses the Multiscale Finite Element Method (MsFEM) introduced in~\cite{HowW97}, which constructs a set of multiscale basis functions with prescribed values on element boundaries. 
The method in~\cite{OwhZB14} allows for localization of the auxiliary problems, but these local problems involve more complex biharmonic equations. 
Based on an interpretation of numerical homogenisation within a game theoretical approach, the use of so-called gamblets has been considered in~\cite{OwhZ17} in a multi-level setting. 
Yet another multiscale method in~\cite{ChuEL14} relies on a generalized MsFEM which is based on the solution of local spectral problems and a discontinuous Galerkin ansatz. 
An overview of different multiscale methods in connection with the wave equation is presented in~\cite{AbdH17b}.

In the present work, we base our construction on the multiscale method known as Localized Orthogonal Decomposition (LOD)~\cite{MalP14,HenP13,MalP20}, which constructs an adapted approximation space based on the solution of localized elliptic sub-problems and provably works under minimal structural assumptions on 
the spatial material parameters. 
The LOD has been successfully applied to time-harmonic wave propagation problems, such as the Helmholtz equation; see e.g.,~\cite{GalP15,Pet17,PetV20,MaiV20}. Further, the approach has been studied for the wave equation with an implicit time discretisation~\cite{AbdH17} and also in connection with an explicit leapfrog method~\cite{PetS17,MaiP19}. A very favourable side effect for the combination of the LOD with an explicit scheme is the relaxation of the time step restriction which leads to a complexity reduction in both space and time. 

Here, we start from these latter ideas and aim at further reducing the computation costs by the use of a suitable lumping strategy. 
The overall goal hereby is to keep optimal convergence rates with a rather relaxed time step restriction and a significant speed-up due to the replacement of the mass matrix by a suitable diagonal approximation. 
A common technique to lump the mass matrix into a diagonal matrix is by taking the sum of each row. 
Another technique is to evaluate the mass matrix entries using a quadrature rule where the quadrature points coincide with the finite element nodes. These techniques are often equivalent. Mass lumping techniques for the wave equation have been studied for quadrilateral and hexahedral elements \cite{seriani94,komatitsch98}, also known as spectral elements, triangular elements \cite{cohen95,cohen01,mulder96,chin99,mulder13,cui17,liu17}, and tetrahedral elements \cite{mulder96,chin99,geevers18}. A rigorous error analysis and conditions for when mass lumping preserves optimal convergence rates are presented in \cite{geevers18,geevers19}. The analysis, however, fails when the mass matrix involves a spatial material parameter that varies at a scale much finer than the mesh size. We therefore introduce here an adapted interpolation operator that is then employed for the lumping strategy as well as in the construction of the LOD method such that the resulting lumped multiscale approach maintains a second-order convergence rate in the energy norm.

The rest of the paper is organised as follows. In Section~\ref{s:fem}, we introduce the heterogeneous wave equation and discuss a classical finite element approximation and a standard mass lumping strategy, which are only reliable if fine oscillations of the material parameters are resolved. We therefore introduce a mass lumped multiscale approach in Section~\ref{s:multiscale} based on the LOD method and a leapfrog discretisation in time. The error of the method is rigorously analysed in Section~\ref{s:error}, which marks the main contribution of this work. We then present numerical illustrations in Section~\ref{s:numerics}, before we summarise our conclusions in Section~\ref{s:conclusion}.\\

\noindent\textbf{Notation.} Throughout this work, we use the following notation. We let $L^2(\Omega)$ denote the Lebesgue space of square-integrable functions on the domain $\Omega$, $H^k(\Omega)$ the Sobolev space of functions in $L^2(\Omega)$ of which all partial derivatives up to order $k$ are also in $L^2(\Omega)$, $H^1_0(\Omega)$ the space of functions in $H^1(\Omega)$ with zero trace on the boundary $\partial\Omega$, and $L^{\infty}(0,T;X)$, with $X$ a Banach space, the Bochner space of functions $u\colon(0,T)\rightarrow X$ such that $\|u(\cdot,t)\|_X$ is uniformly bounded on $(0,T)$. We abbreviate $\|\cdot\|_0 = \|\cdot\|_{L^2(\Omega)}$, $\|\cdot\|_k = \|\cdot\|_{H^k(\Omega)}$, as well as $\|\cdot\|_{\infty,k} = \|\cdot\|_{L^\infty(0,T;H^k(\Omega))}$. Restrictions of the norms to subdomains $S \subseteq \Omega$ are denoted with $\|\cdot\|_{k;S}$ and $\|\cdot\|_{\infty,k;S}$. The corresponding semi-norms are denoted by $|\cdot|_{\bullet}$. Further, we use $C$ for some positive constant that does not depend on the mesh widths $H,\,h$ or the fine scale on which the 
spatial coefficients $\alpha$ and $\beta$ (which represent the material parameters) vary, but may depend on the diameter of the domain $\Omega$, the bounds $\alpha_{\min}$, $\alpha_{\max}$, $\beta_{\min}$, and $\beta_{\max}$, and the regularity of the mesh.
Note that $C$ might change from line to line in the estimates.

\section{Classical finite element method for the wave equation}\label{s:fem}

\subsection{Wave equation}
Let $\Omega\subset\mR^d$ be a bounded polyhedral domain in $d$ dimensions with a Lipschitz boundary $\partial\Omega$ and let $(0,T)$ be a time interval. We consider the wave equation
\begin{subequations}\label{eq:exactSol}
\begin{align}
\partial_t^2 u &= \beta^{-1} \nabla\cdot(\alpha\nabla u) + f \qquad&&\text{in }\Omega\times(0,T), \label{eq:exactSol1a} \\
u &= 0 &&\text{on }\partial\Omega\times(0,T), \\
u|_{t=0 } &= 0 &&\text{in }\Omega, \\
\partial_tu|_{t=0} &= 0 &&\text{in }\Omega,
\end{align}
\end{subequations}
where $u=u(\vx,t)$ is the wave field that needs to be computed, $\partial_t$ denotes the time derivative, $\nabla=(\partial_1,\partial_2,\dots,\partial_d)^t$ denotes the vector of partial derivatives $\partial_i$ with respect to direction $x_i$ (i.e., $\nabla$ is the gradient operator and $\nabla\cdot$ is the divergence operator), $\beta=\beta(\vx):0<\beta_{\min}<\beta(\vx)<\beta_{\max}<\infty$ and $\alpha=\alpha(\vx):0<\alpha_{\min}<\alpha(\vx)<\alpha_{\max}<\infty$ denote bounded spatial coefficients, and $f=f(\vx,t)$ denotes the source term.

\begin{rem}
The acoustic wave equation can be written in the form of \eqref{eq:exactSol}. In that case, $u$ denotes the pressure field and we have $\alpha=\rho^{-1}$ and $\beta=\rho^{-1}c^{-2}$, where $\rho$ denotes the mass density and $c$ the wave propagation speed.
\end{rem}

Assuming that $f\in L^\infty(0,T;L^2(\Omega))$, the weak formulation of~\eqref{eq:exactSol} can be stated as follows: find $u\in L^\infty(0,T;H^1_0(\Omega))$, with $\partial_t u\in L^\infty(0,T;L^2(\Omega))$ and $\partial_t(\beta\partial_tu)\in L^\infty(0,T;H^{-1}(\Omega))$, such that $u|_{t=0}=\partial_tu|_{t=0}\equiv 0$ and such that
\begin{align}
\label{eq:WF}
\langle \partial_t(\beta\partial_t u), w \rangle + (\alpha\nabla u,\nabla w) = (\beta f,w) \qquad \text{for all $w\in H_0^1(\Omega)$ and a.e. $t\in(0,T)$,}
\end{align}
where $\langle\cdot,\cdot\rangle$ denotes the pairing of distributions in $H^{-1}(\Omega)$ and functions in $H^1_0(\Omega)$, and $(\cdot,\cdot)$ denotes the $L^2(\Omega)$ (or $L^2(\Omega)^d$) inner product. 
Well-posedness of this problem follows from the proof of \cite[Theorem 8.1, Remark 8.2]{LioM72} by replacing the inner product $(\cdot,\cdot)$ by the weighted $L^2(\Omega)$ inner product $(\beta\cdot,\cdot)$.

If $\partial_tf\in L^\infty(0,T;L^2(\Omega))$ and $f|_{t=0}\equiv 0$, then $\partial_t^2u\in L^{\infty}(0,T;L^2(\Omega))$; see Lemma \ref{lem:regEst}. We can then rewrite \eqref{eq:WF} as
\begin{align}
\label{eq:WF2}
b(\partial_t^2u, w) + a( u, w) = b(f,w) \qquad \text{for all $w\in H_0^1(\Omega)$ and a.e. $t\in(0,T)$,}
\end{align}
where
\begin{align*}
b(u,w)&:= \int_{\Omega} \beta u w \;\dd\vx, \qquad a(u,w) := \int_{\Omega} \alpha(\nabla u)\cdot(\nabla w) \;\dd\vx.
\end{align*}
In the next subsection, we discuss a discretisation of~\eqref{eq:WF2} with classical finite elements.

\subsection{Classical finite elements}

Let $\cT_H$ denote a regular quasi-uniform simplicial/square/cubic mesh of $\Omega$ of mesh width $H$. In particular, we assume that
\begin{align*}
\max_{e\in\cT_H} H_e = H, \quad \frac{\max_{e\in\cT_H} H_e}{\min_{e\in\cT_H} H_e} \leq \kappa, \quad \max_{e\in\cT_H} \frac{H_e}{\rho_e} \leq \gamma,
\end{align*}
for some positive constants $\kappa$ and $\gamma$, where $H_e$ denotes the diameter of the element $e$ and $\rho_e$ denotes the diameter of the largest sphere that can be contained in $e$.
Furthermore, let $U_H\subset H_0^1(\Omega)$ denote the corresponding linear finite element space, i.e.
\begin{align*}
U_H &:= \{ u\in H_0^1(\Omega) \;:\; u|_e \in U_e \text{ for all } e\in\cT_H  \},
\end{align*}
where $U_e$ denotes the element space. For simplicial elements, we set $U_e=P_1(e)$, where $P_k(e)$ denotes the space of all polynomials on $e$ of degree $k$ or less, and for square/cubic meshes, we set $U_e=Q_1(e)$, where $Q_k(e)$ denotes the space of polynomials in $\mathrm{span}\{x_1^{i_1}x_2^{i_2}\cdots x_d^{i_d}\;|\; i_1,i_2,\dots,i_d\leq k\}$ on $e$. The classical finite element method can be formulated as finding $u_H\colon[0,T]\rightarrow U_H$ such that $u_H|_{t=0}=\partial_tu_H|_{t=0}\equiv 0$, and such that
\begin{align}
\label{eq:FEM}
b(\partial_t^2u_H(\cdot,t), w) + a( u_H(\cdot,t), w) = b(f(\cdot,t),w) \qquad\text{for all $w\in U_H$ and $t\in(0,T)$.}
\end{align}
Equation~\eqref{eq:FEM} can be rewritten as a system of ordinary differential equations. Let $\cX_H$ denote the set of all vertices of $\cT_H$ and let $\{\vx_i\}_{i=1}^{N_x}=\cX_H\setminus\partial\Omega$ be an ordering of all the vertices in the interior of $\Omega$. Also, let $\phi_i\in U_H$ denote the nodal basis function corresponding to the node $\vx_i$, i.e., 
\begin{align}
\label{eq:nodalBF}
\phi_i(\vx_j) &= \begin{cases}
1, &i=j, \\
0, &i\neq j.
\end{cases}
\end{align}
Next, define matrices $M,A\in\mR^{N_x\times N_x}$ and vector functions $\vvu, \vvf^*\colon[0,T]\rightarrow\mR^{N_x}$ as follows:
\begin{align*}
M_{ij} &:= b(\phi_i,\phi_j), \qquad\quad A_{ij} := a(\phi_i,\phi_j), \\
\vvu_{i}(t) &:= u_H(\vx_i,t), \qquad \vvf^*_i(t) := b(f(\cdot,t),\phi_i).
\end{align*}
Equation \eqref{eq:FEM} can then be rewritten as the following system of ordinary differential equations:
\begin{align}
\label{eq:ODE}
M\partial_t^2\vvu(t) + A\vvu(t) = \vvf^*(t) \qquad\text{for all } t\in(0,T).
\end{align}
For the time discretisation, the term $\partial_t^2\vvu(t)$ can be approximated using a central difference scheme. Let $N_t$ denote the number of time steps, let $\Delta t = T/N_t$ be the time step size, $t^n:=n\Delta t$, and let $\vvu^n$ represent $\vvu(t^n)$. We can approximate $\partial_t^2\vvu(t^n)$ by the central difference $(\vvu^{n+1}-2\vvu^n+\vvu^{n-1})/\Delta t^2$. Equation \eqref{eq:ODE} then becomes
\begin{align*}
M \frac{\vvu^{n+1}-2\vvu^n+\vvu^{n-1}}{\Delta t^2} + A\vvu^n = \vvf^*(t^n)  \qquad\text{for any $n \in \mathbb{N}$ with $0<n<N_t$.}
\end{align*}
This scheme is also known as the \emph{leapfrog} scheme. 
Rewriting this equation, we obtain the following time-stepping scheme,
\begin{align}
\label{eq:ODE2}
\vvu^{n+1} &= -\vvu^{n-1} + 2\vvu^n  + \Delta t^2 (-M^{-1}A\vvu^n + \vvf(t^n)) \qquad\text{for $n \in \mathbb{N}$, $0<n<N_t$,}
\end{align}
where $\vvf(t):=M^{-1}\vvf^*(t)$. The fully discrete finite element method can be formulated as finding $\vvu^n\in\mR^{N_x}$ for $n=0,1,\dots,N_t$, such that $\vvu^0=\vvu^1=0$, and such that \eqref{eq:ODE2} is satisfied.

Note that, in order to compute $\vvu^{n+1}$ using \eqref{eq:ODE2}, one needs to solve a linear system of the form $M\vvx=\vvy$. For large meshes, solving this linear system of equations is computationally inefficient unless $M$ is a diagonal matrix. To circumvent this problem, one can use a technique known as mass lumping, which will be discussed in the next subsection.

\subsection{Mass lumping}

To obtain a suitable diagonal matrix $M$, we can replace the bilinear operator $b(\cdot,\cdot)$ by a discrete variant $b_H(\cdot,\cdot)$, defined by
\begin{align}
\label{eq:MLproduct}
b_H(u,w) := \int_{\Omega} I_H(\beta u w) \;\dd\vx,
\end{align}  
where $I_H\colon\cC^0(\overline\Omega)\rightarrow U_H$ denotes the interpolation operator that interpolates a continuous function sampled at the vertices $\cX_H$ by a function in $U_H$. The vector function $\vvf$ and the matrix $M$ can then be written as
\begin{align*}
\qquad \vvf_i(t) = f(\vx_i,t),\qquad M_{ij} = 
\begin{cases}
\beta(\vx_i) \int_{\Omega} \phi_i\;\dd\vx,& i=j, \\
0, & i\neq j. 
\end{cases}
\end{align*}
The matrix $M$ is commonly known as the mass matrix and the technique of obtaining a diagonal mass matrix is called mass lumping.

We can rewrite the mass lumped finite element method as follows: find $u_H^n\in U_H$, for $n=0,1,\dots,N_t$, such that $u_H^0=u_H^1\equiv 0$ and such that
\begin{align*}
u_H^{n+1} = -u_H^{n-1} + 2u_H^n  + \Delta t^2 (-\cL_H u_H^n + I_Hf(\cdot,t^n)) \qquad  \text{for } n=1,2,\dots,N_t-1,
\end{align*}
where $u_H^n$ represents $u_H(\cdot,t^n)$ and where $\cL_H\colon U_H\rightarrow U_H$ is defined by
\begin{align*}
(\cL_H u)(\vx_i) = \frac{a( u, \phi_i)}{\beta(\vx_i) \int_{\Omega} \phi_i \;\dd \vx} \qquad\text{for all } i\leq N_x.
\end{align*}

It is well-known that -- in case of sufficiently smooth $\partial\Omega$, $\alpha$, $\beta$, and $f$ -- the first-order finite element method (with lumping) converges at an optimal order \cite{baker1976}, i.e., a linear convergence rate in the $L^{\infty}(0,T;H^1(\Omega))$ norm. However, if $\partial\Omega$, $\alpha$, and $\beta$ contain features of a much finer scale than the mesh width $H$, the classical error estimates are no longer valid for the $H$ values of interest and the expected convergence rate will only be observed when $H$ resolves the fine scales. To improve the accuracy of the finite element approximation, one can replace the finite element space $U_H$ by an adapted finite element space that takes into account local fine-scale features of the coefficients. Here, we employ the multiscale technique known as Localized Orthogonal Decomposition (LOD) that was presented in~\cite{MalP14} and further developed in~\cite{HenP13}. The method is introduced in the following section.

\section{Mass lumped multiscale approach for the wave equation}\label{s:multiscale}

\subsection{The Localized Orthogonal Decomposition method}

The construction of the LOD is heavily based upon a local quasi-interpolation operator $\Pi_H\colon H^1_0(\Omega) \to U_H$ which fulfils the following stability and interpolation properties,
\begin{subequations}
\begin{align}
\|H^{-1}\,(1 - \Pi_H)v\|_{0;e} + |\Pi_H v|_{1;e} &\leq C_\mathrm{I}\, |v|_{1;\Nb(e)}, && v \in H^1_0(\Omega),\,\; e \in \cT_H,\label{eq:IH1loc}\\ 
\|\Pi_H v\|_{0;e} &\leq C_\mathrm{I}\, \|v\|_{0;\Nb(e)}, && v \in L^2(\Omega),\;\, e \in \cT_H, \label{eq:IH2loc}\\
\Pi_H \circ \Pi_H &= \Pi_H, \label{eq:IH3}
\end{align}
\end{subequations}
where $\Nb(e)$ denotes the neighbourhood of an element $e \in \cT_H$ defined by
\begin{equation*}
\Nb(e) := \bigcup\bigl\{\hat{e}\in\cT_H\,\colon\,\overline{\hat{e}}\cap\overline{e} \neq\emptyset\bigr\}.
\end{equation*} 
For later use, we also define the \emph{neighbourhood of order $\ell \in \mathbb{N}$} (or \emph{$\ell$-patch}) of $e \in \cT_H$ by
\begin{equation*}
\Nb^\ell(e)=\Nb(\Nb^{\ell-1}(e)),\quad \ell \geq 2,\quad \text{ and }\quad\Nb^1(e)=\Nb(e).
\end{equation*}
Going back to the operator $\Pi_H$, we can now define a decomposition of a function $v \in H^1_0(\Omega)$ into a coarse-scale part $\Pi_Hv \in U_H$ and a fine-scale part $(1-\Pi_H)v \in W_H := \ker \Pi_H$. The idea of the method is to enhance any function $v_H \in U_H$ by adding an appropriate function in $W_H$. 
This is done in a localized way, i.e., for a fixed localization parameter $\ell \in \mathbb{N}$ and an element $e \in \cT_H$, we define for a function $v \in H^1_0(\Omega)$ its \emph{local correction} $\cQ^\ell_ev \in W_H(\Nb^\ell(e))$ by
\begin{equation}\label{eq:loccor}
a(\cQ^\ell_ev,w) = \int_e \alpha \nabla v \cdot \nabla w \dx \qquad\text{for all $w \in W_H(\Nb^\ell(e))$,}
\end{equation} 
where 
\begin{equation*} 
W_H(\Nb^\ell(e)):=\{w\in W_H \,: \,w\vert_{\Omega \setminus \Nb^\ell(e)}=0 \}
\end{equation*}
denotes the restriction of the kernel space $W_H$ to $\Nb^\ell(e)$. 
The \emph{global correction} $\cQ^\ell\colon H^1_0(\Omega) \to W_H$ is finally defined by 
\begin{equation*}\label{eq:globcor}
\cQ^\ell v := \sum_{e \in \cT_H} \cQ^\ell_e v.
\end{equation*}
We emphasize that the operator $\cQ^\ell$ fulfils the stability property $\|\cQ^\ell v\|_1 \leq C \,\| v\|_1$ for all $v \in H^1_0(\Omega)$.

We can now define a new (multiscale) test and ansatz space $\tU^\ell_H := \cS^\ell_H(U_H)$, with the multiscale operator $\cS_H^\ell := 1 - \cQ^\ell$. Note that a basis of $\tU^\ell_H$ can be obtained from classical nodal basis functions via $\cS_H^\ell\phi_i$, $i = 1, \ldots N_x$. In particular, $\dim (\tU^\ell_H) = \dim (U_H)$.

The localization parameter $\ell$ is crucial to avoid solving global problems when constructing the multiscale operator. It is, however, important to appropriately couple $\ell$ to the mesh parameter $H$ as discussed in Section~\ref{ss:locerr} below. Nevertheless, non-localized corrections (i.e., $\ell = \infty$) are an important tool for the theoretical error analysis. In particular, the following orthogonality property holds naturally in the non-localized setting,
\begin{equation}\label{eq:ortho}
a(\cS_H v, w) = 0 \qquad\text{for any $v \in H^1_0(\Omega)$ and $w \in W_H$, }
\end{equation}
where here and in the following we abbreviate $\cS_H := \cS_H^\infty$.
In other words, $\tU_H:=\cS_H(U_H)$ and $W_H$ form an orthogonal decomposition of $H_0^1(\Omega)=\tU_H \oplus W_H$ with respect to the inner product $a(\cdot, \cdot)$. This orthogonality gives the method its name. 

The semi-discrete formulation with the LOD multiscale space for the spatial discretisation can be written as finding $\tu^\ell_H\colon[0,T]\rightarrow \tU^\ell_H$ such that $\tu^\ell_H|_{t=0} =\partial_t\tu^\ell_H|_{t=0} \equiv 0$ and
\begin{align}
\label{eq:LOD}
b(\partial_t^2\tu^\ell_H(\cdot,t), w) + a(\tu^\ell_H(\cdot,t), w) = b(f(\cdot,t),w) \qquad\text{for all $w\in \tU^\ell_H$ and $t\in(0,T)$. }
\end{align}
It follows from~\cite[Theorem~4.7]{AbdH17} that, if $\beta \equiv 1$, $f$ is sufficiently smooth, and $\ell \geq C|\log H|$, the error between the solution $u$ of~\eqref{eq:WF} and the solution $\tu_H^\ell$ of~\eqref{eq:LOD} can be estimated by
\begin{align}
\label{eq:errBoundLOD}
 \| u-\tu^\ell_H \|_{\infty,1} \leq C\,H^2,
\end{align}
where $C$ in~\eqref{eq:errBoundLOD} depends on $\alpha_{\min},\,\alpha_{\max},\,\Omega,\,d$, and norms of $f$, but not on the fine scale on which the coefficient $\alpha$ varies. 
Such a bound also holds for the case $\beta \not\equiv 1$ if a specific $\beta$-dependent operator $\Pi_H$ is used, as investigated in Section~\ref{s:error}. 

For mass lumping, one can no longer use the discrete inner product $b_H(\cdot,\cdot)$ as defined in \eqref{eq:MLproduct}, since $\beta$ is not necessarily continuous in the very general setting and therefore the integrand in \eqref{eq:MLproduct} might not be well-defined. Further, the adapted basis functions $\cS^\ell_H\phi_i$ may no longer satisfy the property of nodal basis functions given in \eqref{eq:nodalBF}. To apply mass lumping to the LOD method and maintain an error bound of the form \eqref{eq:errBoundLOD}, one needs to carefully choose the discrete inner product $b_H(\cdot,\cdot)$ and the projection operator $\Pi_H$. A suitable mass lumping technique will be presented and analysed in the next subsection.

\subsection{The mass lumped LOD method}

For $k \in \mathbb{N}_0$, let $U_{H,k}^{D}$ denote the \emph{discontinuous} piece-wise polynomial function space given by
\begin{align*}
U^D_{H,k} &:= \{ u\in L^2(\Omega) \;:\; u|_e \in U_{e,k} \,\forall e\in\cT_H\},
\end{align*}
where $U_{e,k}=P_k(e)$ for simplicial elements, and $U_{e,k}=Q_k(e)$ for square/cubic elements. Observe that $U_H=U_{H,1}^D\cap H_0^1(\Omega)$. Also, let $\Pi^D_{H,k}\colon L^2(\Omega)\rightarrow U_{H,k}^D$ denote the weighted $L^2(\Omega)$ projection operator defined by
\begin{align*}
b(\Pi_{H,k}^Du,w) = b(u,w) \qquad\text{for all } w\in U^D_{H,k}.
\end{align*}
Furthermore, for each element $e\in\cT_H$, let $\cX_e$ denote the set of vertices of $e$ and let $I_e\colon\cC^0(\overline e)\rightarrow U_e$ denote the interpolation operator that interpolates a function sampled at the vertices of $e$ by a polynomial in $U_e$. We define the following discrete variant of the bilinear operator $b$:
\begin{align*}
b_H'(u, w) := \sum_{e\in\cT_H} \int_e \beta I_e(uw)\;\dd\vx, \qquad u,w\in U_{H,1}^D.
\end{align*}
Note that
\begin{align*}
b_H'(\phi_i,\phi_j) = \begin{cases}
\int_\Omega \beta\phi_i \;\dd\vx, &i=j, \\
0, &\text{otherwise}.
\end{cases}
\end{align*}
We also define the averaging operator $\Pi^\mathrm{av}_{H}\colon U_{H,1}^D\rightarrow U_H$ by the property that
\begin{align*}
b_H'(\Pi_H^\mathrm{av}u, w) = b_H'(u,w) \qquad\text{ for all } w\in U_H.
\end{align*}
This is equivalent to writing
\begin{align}
\label{eq:defPiHav2}
(\Pi_H^\mathrm{av}u)(\vx_i) = \frac{ \sum_{e:\vx_i \in\overline{e}} (\int_{e} \beta\phi_i \;\dd\vx)u_e(\vx_i) }{ \int_{\Omega} \beta\phi_i \;\dd\vx} \qquad\text{ for all } i\leq N_x,
\end{align}
where $u_e:= u|_e$. In other words, $\Pi^\mathrm{av}_H u$ is obtained by taking a weighted average of the values at the vertices of a discontinuous function $u\in U_{H,1}^D$.

Next, we define the projection operator $\Pi_H\colon L^2(\Omega)\rightarrow U_H$ as $\Pi_H:=\Pi_H^\mathrm{av}\circ\Pi_{H,1}^D$. 
Further, we introduce the semi-discrete mass lumped LOD method that seeks $\tu_H^\ell := \cS_H^{\ell} u^\ell_H\colon [0,T]\rightarrow \tU^\ell_H$, where 
$u^\ell_H\colon[0,T]\rightarrow U_H$ is such that $u^\ell_H|_{t=0}=\partial_tu^\ell_H|_{t=0} \equiv 0$ and 
\begin{align}
\label{eq:MLLOD2}
b_H'(\partial_t^2 u^\ell_H(\cdot,t), w) + a(\cS^\ell_Hu^\ell_H(\cdot,t), \cS^\ell_H w) &= b_H'(\Pi_Hf(\cdot,t) ,w) \qquad\text{for all $w\in U_H$, $t\in(0,T)$.}
\end{align}
With the equality $\Pi_H\cS^\ell_H w = w$ for all $w \in U_H$, 
we can reformulate~\eqref{eq:MLLOD2} as 
\begin{align}
\label{eq:MLLOD}
b_H'(\Pi_H \partial_t^2\tu^\ell_H(\cdot,t), \Pi_Hw) + a(\tu^\ell_H(\cdot,t),w) &= b_H'(\Pi_Hf(\cdot,t),\Pi_Hw) \quad\text{for all $w\in \tU^\ell_H$, $t\in(0,T)$.}
\end{align} 
The corresponding fully discrete mass lumped LOD method can be stated as finding $\tu_H^{\ell,n}\in \tU_H^{\ell}$ for $n=0,1,\dots,N_t$ such that $\tu_H^{\ell,0}=\tu_H^{\ell,1}\equiv 0$ and
\begin{align}
\label{eq:MLLODfull}
b_H'\left(\Pi_H \frac{\tu^{\ell,n+1}_H - 2\tu^{\ell,n}_H + \tu^{\ell,n-1}_H}{\Delta t^2}, \Pi_Hw \right) + a(\tu^{\ell,n}_H,w) &= b_H'(\Pi_Hf(\cdot,t^n),\Pi_Hw) 
\end{align}
for all $w\in \tU_H^{\ell}$ and $n=1,2,\dots,N_t-1$. Here, $\tu_H^{\ell,n}$ approximates $\tu^\ell_H(\cdot,t^n)$.
As before, this is equivalent to writing $\tu_H^{\ell,n} = \cS_H^{\ell} u_H^{\ell,n}$, where $u_H^{\ell,n}\in U_H$ is defined such that $u_H^{\ell,0}=u_H^{\ell,1}\equiv 0$ and 
\begin{align}
\label{eq:MLLODfull2}
b_H'\left(\frac{u^{\ell,n+1}_H - 2u^{\ell,n}_H + u^{\ell,n-1}_H}{\Delta t^2}, w \right) + a(\cS_H^{\ell} u^{\ell,n}_H, \cS_H^{\ell} w) &= b_H'(\Pi_Hf(\cdot,t^n),w) 
\end{align}
for all $w\in U_H^{\ell}$ and $n=1,2,\dots,N_t-1$.
We can also rewrite \eqref{eq:MLLODfull2} as
\begin{align}
\label{eq:MLLODfull3}
u_H^{\ell,n+1} &= -u_H^{\ell,n-1} + 2u_H^{\ell,n}  + \Delta t^2 (-\cL^\ell_{H} u_H^{\ell,n} + \Pi_Hf(\cdot,t^n)),
\end{align}
where $\cL_{H}^\ell\colon U_H\rightarrow U_H$ is defined by
\begin{align*}
(\cL_H^\ell u)(\vx_i) = \frac{a(\cS^\ell_Hu,\cS^\ell_H\phi_i)}{\int_\Omega \beta\phi_i \;\dd\vx} \qquad\text{for all $i\leq N_x$.}
\end{align*}
A detailed error analysis for the method presented in~\eqref{eq:MLLODfull} is given in the following section. 

\begin{rem}[Right-hand side]
Note that we have $\Pi_H f$ in~\eqref{eq:MLLOD2}--\eqref{eq:MLLODfull3} to ensure the necessary continuity for the operator $I_e$ in the definition of $b_H'$. If $f$ is regular enough in space, e.g., $H^2$-regular if $d \leq 3$, one can instead use $f$ directly without an influence on the convergence rate. 
\end{rem}

\begin{rem}[Correct averaging operator]
The averaging strategy used in \eqref{eq:defPiHav2} averages the values at a given node based on a $\beta$-dependent weighting. This averaging -- and therefore the explicit definition of the interpolation operator $\Pi_H$ -- is essential in order to obtain the striven second-order convergence rate. In particular, the `classical' definition of the operator $\Pi_H$ that is based on equal weighting (see \cite{ErnG17}) and that is commonly used in the context of the LOD may lead to a sub-optimal convergence rate as also illustrated in Section~\ref{s:numerics}.
\end{rem}

\section{Error analysis}\label{s:error}

This section is devoted to a complete error analysis for the mass lumped LOD method as given in \eqref{eq:MLLODfull}. First, we state the main result as a theorem below. The proof is given in Section \ref{ss:proofthm} and relies on the preliminary results and error estimates obtained in Sections \ref{ss:preresults}--\ref{ss:temperr}.

\begin{thm}[Error of the mass lumped LOD method]\label{thm:fullerr}
Let $u$ be the exact solution of \eqref{eq:WF2} and let $\tu_h^{\ell,n}$, with $\ell\in\mathbb{N}$, be the mass lumped LOD approximation given by \eqref{eq:MLLODfull}. Assume that $\partial_t^k f \in L^{\infty}(0,T;L^2(\Omega))$ for $k\leq 3$, $\partial_t^{k}f \in L^{\infty}(0,T;H^1_0(\Omega))$ for $k\leq1$, and $\partial_t^{k}f|_{t=0}\equiv 0$ for $k\leq 2$.
Further, assume that the CFL condition
\begin{equation}\label{eq:CFL}
1 - \frac12 (1 + C_\mathrm{loc}^2\,\ell^{d}\exp(-c_\mathrm{dec}2\ell))\,\frac{\alpha_{\max}}{\beta_{\min}} C_b^2 C^2_\mathrm{inv}H^{-2}\Delta t^2 \geq \delta
\end{equation}
holds true for some $\delta > 0$, with $C_b$ the constant in Lemma~\ref{lem:bH}, $C_\mathrm{loc},\,c_\mathrm{dec}$ the constants in Lemma~\ref{lem:locerrEll}, and $C_\mathrm{inv}$ the constant in~\eqref{eq:invIneq}. 
Then, we have the following error estimate: 
\begin{equation}\label{eq:finalerr}
\begin{aligned}
\max_{1\leq n\leq N_t}\|\Pi_H D_{\Delta t}u(\cdot,t^n) - \Pi_H D_{\Delta t}\tu_{H}^{\ell,n}\|_0 + & \max_{0\leq n\leq N_t} \|u(\cdot,t^n) - \tu_{H}^{\ell,n}\|_1
\leq \\
&\qquad C\,(H^2 + \Delta t^2 + \ell^{d/2}\exp(-c_\mathrm{dec}\ell)) \cdot \texttt{\upshape data},
\end{aligned}
\end{equation}
with the notation $D_{\Delta t} v_H(
\cdot,t^n) = \Delta t^{-1}(v_H(\cdot,t^n) - v_H(\cdot,t^{n-1}))$ and $D_{\Delta t}v_H^n = {\Delta t}^{-1}(v_H^n-v_H^{n-1})$, and where 
\begin{align} \label{eq:data}
\texttt{\upshape data} =  \| f \|_{\infty,1} + T \| \partial_t f \|_{\infty,1} + T \| \partial_t^2 f \|_{\infty,0} + T^2 \| \partial_t^3 f \|_{\infty,0} + T\| f \|_{\infty,0} + T^2 \| \partial_t f \|_{\infty,0}.
\end{align}
Note that the constant $C$ in~\eqref{eq:finalerr} depends on $\alpha_{\min},\,\alpha_{\max},\,\beta_{\min},\,\beta_{\max}$, and in particular on the contrast $\max\{\alpha_{\max},\beta_{\max}\}/\min\{\alpha_{\min},\beta_{\min}\}$. 
\end{thm}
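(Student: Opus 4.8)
The plan is to follow the classical three-step paradigm for error analysis of discretised wave equations: (i) split the total error into a spatial (LOD) part and a temporal (leapfrog) part via a suitable projection, (ii) control the localization error separately, and (iii) combine everything with a discrete energy/stability argument for the fully discrete scheme. First I would introduce the \emph{elliptic projection} onto the non-localized multiscale space $\tU_H$, i.e. the operator $P_H\colon H^1_0(\Omega)\to\tU_H$ defined by $a(P_Hv,w)=a(v,w)$ for all $w\in\tU_H$; by the orthogonality \eqref{eq:ortho} this is just $P_H = \cS_H\circ\Pi_H$ restricted appropriately, and the $L^2$-type error $\|(1-P_H)v\|_0$ will be bounded by $CH^2|v|_{2}$-type quantities using the stability/interpolation properties \eqref{eq:IH1loc}–\eqref{eq:IH3} of $\Pi_H$ together with the $\beta$-weighted structure built into $\Pi_H=\Pi_H^{\mathrm{av}}\circ\Pi_{H,1}^D$. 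The regularity needed to make $|u|_2$, $|\partial_t u|_2$, etc. meaningful will come from Lemma~\ref{lem:regEst} and the assumptions on $f$; note these estimates must be phrased purely in terms of $\|f\|$-norms (not $\|u\|_2$) since $u$ itself is only as regular as the rough coefficients allow — this is exactly why the LOD machinery is needed, and the projection error estimate must be the coefficient-robust one coming from the LOD theory rather than a naive $H^2$ bound.

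Next I would treat the semi-discrete mass lumped LOD solution $\tu_H^\ell$ of \eqref{eq:MLLOD} and compare it to the projection of $u$. Writing $\theta := P_H u - \tu_H^\ell \in \tU_H$ (ignoring localization for the moment, i.e. $\ell=\infty$), one derives an error equation of the form $b_H'(\Pi_H\partial_t^2\theta,\Pi_Hw) + a(\theta,w) = (\text{consistency terms})$ for all $w\in\tU_H$. The consistency terms collect: the difference between $b$ and the lumped $b_H'$ (controlled by Lemma~\ref{lem:bH} with constant $C_b$, which is where the quadrature/lumping error $O(H^2)$ enters), the difference between $f$ and $\Pi_Hf$ on the right-hand side (controlled by the projection-error estimate applied to $f$ and $\partial_t^2 f$, hence the $\|f\|_{\infty,1}$ and $\|\partial_t^k f\|$ terms in \texttt{data}), and the term involving $\partial_t^2(1-P_H)u$ (controlled by the projection error applied to $\partial_t^2 u$, requiring the regularity $\partial_t^3 f$-type assumption). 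Testing with $w=\partial_t\theta$ and using the coercivity of $a$ and the uniform equivalence of $b_H'(\Pi_H\cdot,\Pi_H\cdot)$ with $\|\Pi_H\cdot\|_0^2$ (again Lemma~\ref{lem:bH}), a Gronwall argument yields $\|\theta\|_{\infty,1} + \|\Pi_H\partial_t\theta\|_{\infty,0} \le C(H^2+\|\text{consistency}\|)$, giving the semi-discrete spatial error at rate $H^2$.

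For the temporal part I would then compare the semi-discrete $\tu_H^\ell$ to the fully discrete $\tu_H^{\ell,n}$ of \eqref{eq:MLLODfull}. Here the standard leapfrog analysis applies: the truncation error of the central difference is $O(\Delta t^2)$ in terms of $\|\partial_t^4 \tu_H^\ell\|$-type norms, which are bounded via the discrete scheme in terms of the data — this is the origin of the $T^2\|\partial_t^3 f\|_{\infty,0}$ and $T^2\|\partial_t f\|_{\infty,0}$ contributions. The crucial point is the discrete energy identity for \eqref{eq:MLLODfull}: defining the discrete energy $E^n = \|\cS_H^\ell D_{\Delta t}u_H^{\ell,n}\|_a^2 - \tfrac{\Delta t^2}{4}(\text{curvature term}) + \|D_{\Delta t}\Pi_H\tu_H^{\ell,n}\|_{b_H'}^2$ (or the equivalent positive-definite modification), one shows it is conserved up to the forcing, and positivity of the modified energy is \emph{exactly} guaranteed by the CFL condition \eqref{eq:CFL} — the factor $(1+C_{\mathrm{loc}}^2\ell^d\exp(-2c_{\mathrm{dec}}\ell))\alpha_{\max}\beta_{\min}^{-1}C_b^2C_{\mathrm{inv}}^2H^{-2}$ is precisely the bound on the operator norm of $\cL_H^\ell$ (via the inverse inequality \eqref{eq:invIneq}, the $b_H'$-equivalence, and the localized-correction stability Lemma~\ref{lem:locerrEll}), so the threshold $\delta$ keeps the energy coercive. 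Finally, the localization error $\tu_H - \tu_H^\ell$ (for finite $\ell$) is estimated via Lemma~\ref{lem:locerrEll} at rate $\ell^{d/2}\exp(-c_{\mathrm{dec}}\ell)$, propagated through the same Gronwall/energy argument. Combining the three contributions by the triangle inequality yields \eqref{eq:finalerr}.

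The main obstacle I anticipate is the interplay between mass lumping and the LOD structure in the consistency analysis: because the lumped bilinear form $b_H'$ is only defined on $U_{H,1}^D$ and the multiscale functions $\cS_H^\ell\phi_i$ do \emph{not} lie in that space, one must consistently work with the "coordinate" representation $u_H^\ell\in U_H$ and the operator $\Pi_H$, and carefully verify that $\Pi_H\cS_H^\ell w = w$ for $w\in U_H$ and that all the projection/interpolation error estimates survive the composition $\Pi_H = \Pi_H^{\mathrm{av}}\circ\Pi_{H,1}^D$ — in particular that the $\beta$-weighted averaging in \eqref{eq:defPiHav2} is what makes the lumping error genuinely $O(H^2)$ rather than $O(H)$, which is the content of Lemma~\ref{lem:bH} and must be invoked with care. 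A secondary technical point is bookkeeping the regularity: each time derivative of $f$ in \texttt{data} must be traced back to a specific term (projection of $\partial_t^2 u$, leapfrog truncation needing $\partial_t^4\tu_H^\ell$, etc.), and the factors of $T$ and $T^2$ must be tracked through the discrete Gronwall inequality.
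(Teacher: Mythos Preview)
Your three-part decomposition (semi-discrete spatial error, localization error, temporal error) matches the paper's overall architecture, and your handling of the temporal and localization pieces is essentially what the paper does in Lemmas~\ref{lem:locerr} and~\ref{lem:tempErr}. The semi-discrete analysis, however, takes a different route from the paper's, and in that route you have glossed over the step that actually delivers the $O(H^2)$ rate.

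The paper does \emph{not} introduce an elliptic projection $P_H$. Instead, Lemma~\ref{lem:errsemidisc} exploits a structural feature of the non-localized LOD: for $\ell=\infty$ the semi-discrete equation \eqref{eq:MLLOD} holds not just for $w\in\tU_H$ but for \emph{all} $w\in H_0^1(\Omega)$, since every term vanishes for $w\in W_H$ (by the $a$-orthogonality~\eqref{eq:ortho} and by $\Pi_H w=0$). One then subtracts \eqref{eq:WF2} directly, obtaining an error equation whose entire right-hand side is $r_H(f-\partial_t^2 u,w)$ with $r_H:=b-b_H'(\Pi_H\cdot,\Pi_H\cdot)$. The $O(H^2)$ bound on $r_H$ is Lemma~\ref{lem:rH} --- not Lemma~\ref{lem:bH}, which you cite twice but which is only the norm equivalence of $b_H'$. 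Testing with $\partial_t e_H$ and integrating by parts in time (not Gronwall) finishes the argument.

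Your elliptic-projection route can be salvaged, but the term you describe as ``controlled by the projection error applied to $\partial_t^2 u$'' is the problem: a naive $L^2$ estimate gives only $\|(1-P_H)v\|_0\le CH|v|_1$, because no Aubin--Nitsche duality is available with rough $\alpha$. Hence $b(\partial_t^2(1-P_H)u,w)$ is a priori only $O(H)$. The fix is to note that $(1-P_H)v\in W_H=\ker\Pi_H$, so in fact $b((1-P_H)v,w)=r_H((1-P_H)v,w)$, and \emph{then} Lemma~\ref{lem:rH} supplies the missing factor of $H$. Once you insert this observation the projection $P_H$ becomes superfluous and your argument reduces to the paper's.
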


\begin{rem}
For moderate choices of $\ell$, the CFL condition~\eqref{eq:CFL} basically reads
\begin{equation*}
\Delta t \leq C\sqrt{{\beta_{\min}}/{\alpha_{\max}}}\, H.
\end{equation*}
\end{rem}

\begin{rem}
\label{rem:scalel}
If $\ell$ is of the form $\ell=C |\log(H)|$, with a sufficiently large constant $C$, then the error estimate in Theorem~\ref{thm:fullerr} is of the form
\begin{align*}
\max_{1\leq n\leq N_t}\|\Pi_H D_{\Delta t}u(\cdot,t^n) - \Pi_H D_{\Delta t}\tu_{H}^{\ell,n}\|_0 + & \max_{0\leq n\leq N_t} \|u(\cdot,t^n) - \tu_{H}^{\ell,n}\|_1
\leq C (H^2 + \Delta t^2)\cdot \texttt{\upshape data}.
\end{align*}
\end{rem}

\begin{rem} \label{rem:fnorms}
If $\ell$ is of the form as in Remark \ref{rem:scalel} and if the norms of $f$ in \eqref{eq:data} are bounded independently of the scales at which $\alpha$ and $\beta$ vary, i.e., 
\begin{align*}
\sum_{i=0}^3 \|\partial_t^i f\|_{\infty,0} + \sum_{i=0}^1 \|\partial_t^i f\|_{\infty,1} &\leq C,
\end{align*}
then the error estimate in Theorem \ref{thm:fullerr} is of the form
\begin{align*}
\max_{1\leq n\leq N_t}\|\Pi_H D_{\Delta t}u(\cdot,t^n) - \Pi_H D_{\Delta t}\tu_{H}^{\ell,n}\|_0 + & \max_{0\leq n\leq N_t} \|u(\cdot,t^n) - \tu_{H}^{\ell,n}\|_1
\leq C (H^2+\Delta t^2).
\end{align*}
\end{rem}

\begin{rem}[Initial conditions]\label{rem:initial}
In this work, we restrict ourselves to zero initial conditions in order to improve readability. However, it is important to mention that non-zero initial conditions can be considered as well if appropriate regularity assumptions are met; see, e.g., \cite{AbdH17, MaiP19}. In this case, the approximation at time $t^1$ needs to be chosen more carefully to retain the above convergence rates. Finally, we note that for non-zero initial conditions, one can relax the zero initial conditions on $f$.
\end{rem}

\begin{rem}[Boundary condition on $f$]\label{rem:boundary}
To obtain a second-order convergence rate, we require that $f$ is zero on the boundary. If this is not the case, it can still be shown that the convergence rate is of first order. This was for instance investigated and observed in~\cite{MaiP19} for the particular case of $\beta \equiv 1$ without mass lumping.
\end{rem}
As mentioned in Remark \ref{rem:scalel}, the choice $\ell \approx |\log H|$ is enough to obtain a second-order convergence rate in $H$ and $\Delta t$. Practical experiments, however, indicate that a fixed parameter $\ell$ is often already enough and an adjustment with $H$ is not always required.

Before we turn to proving the main error estimates, we state and prove several preliminary results in the following subsection.

\subsection{Preliminary results}
\label{ss:preresults}

\begin{lem}
\label{lem:a}
The bilinear operator $a$ is an inner product on $H_0^1(\Omega)$ and the corresponding norm $\|u\|_{a}:=\sqrt{a(u,u)}$ is equivalent to the $H^1(\Omega)$ norm in the sense that
\begin{align*}
\alpha_{\min}^{1/2}C_a^{-1}\|u\|_1 \leq \|u\|_{a} \leq \alpha_{\max}^{1/2}C_a\|u\|_1 \qquad\text{for all } u\in H_0^1(\Omega).
\end{align*}
for some positive constant $C_a=C_a(\Omega)$ that does not depend on the mesh or the coefficient $\alpha$, but does depend on the diameter of $\Omega$.
\end{lem}

\begin{proof}
The result follows immediately from the definition of $a$, the fact that $\alpha_{\min}\leq \alpha \leq \alpha_{\max}$, and the Poincar\'e inequality.
\end{proof}

\begin{lem}
\label{lem:b}
The bilinear operator $b$ is an inner product on $L^2(\Omega)$ and the corresponding norm $\|u\|_{b}:=(b(u,u))^{1/2}$ is equivalent to the $L^2(\Omega)$ norm in the sense that
\begin{align*}
\beta_{\min}^{1/2}\|u\|_0 \leq \|u\|_{b} \leq \beta_{\max}^{1/2} \|u\|_0 \qquad\text{for all } u\in L^2(\Omega).
\end{align*}
\end{lem}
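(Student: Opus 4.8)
The statement to prove is Lemma~\ref{lem:b}: that $b(u,w)=\int_\Omega \beta uw\dx$ is an inner product on $L^2(\Omega)$ with induced norm satisfying $\beta_{\min}^{1/2}\|u\|_0 \le \|u\|_b \le \beta_{\max}^{1/2}\|u\|_0$.

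This is an elementary, almost immediate consequence of the two-sided bound $0<\beta_{\min}<\beta(\vx)<\beta_{\max}<\infty$.

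Plan:

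The plan is to verify the three inner-product axioms and then establish the norm equivalence, in each case reducing everything to the pointwise bounds on $\beta$.

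First I would check bilinearity and symmetry of $b$: these are immediate from linearity of the integral and commutativity of pointwise multiplication, $\beta uw = \beta wu$. Then for positive definiteness: since $\beta(\vx)\ge\beta_{\min}>0$ a.e., we have $b(u,u)=\int_\Omega \beta u^2\dx \ge \beta_{\min}\int_\Omega u^2\dx = \beta_{\min}\|u\|_0^2 \ge 0$, with equality forcing $\|u\|_0=0$, i.e.\ $u=0$ in $L^2(\Omega)$. (One should also note $b$ is finite on $L^2(\Omega)$ since $|b(u,w)| \le \beta_{\max}\|u\|_0\|w\|_0$ by Cauchy--Schwarz, so it is well-defined.) This establishes that $b$ is an inner product.

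For the norm equivalence, I would simply chain the pointwise estimates $\beta_{\min}\,u(\vx)^2 \le \beta(\vx)\,u(\vx)^2 \le \beta_{\max}\,u(\vx)^2$ for a.e.\ $\vx$, integrate over $\Omega$ to get $\beta_{\min}\|u\|_0^2 \le \|u\|_b^2 \le \beta_{\max}\|u\|_0^2$, and take square roots. There is no real obstacle here — the only point requiring the slightest care is confirming that $b$ takes finite values so that "$\|u\|_b$" makes sense as a norm on all of $L^2(\Omega)$, which follows from the upper bound just as in Lemma~\ref{lem:a}. Accordingly, the proof is essentially a one-line remark, parallel to the proof of Lemma~\ref{lem:a}: it follows immediately from the definition of $b$, the bounds $\beta_{\min}\le\beta\le\beta_{\max}$, and (for finiteness) the Cauchy--Schwarz inequality.
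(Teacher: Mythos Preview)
Your proposal is correct and follows exactly the same approach as the paper, which simply states that the result follows immediately from the definition of $b$; your write-up just spells out the details behind that one-line remark.
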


\begin{proof}
The result follows immediately from the definition of $b$.
\end{proof}

\begin{lem}
\label{lem:bH}
The bilinear operator $b_H'$ acts as an inner product on $U_{H,1}^{D}$ and the corresponding norm $\|u\|_{b_H'}:=(b_H'(u,u))^{1/2}$ is equivalent to the $L^2(\Omega)$ norm in the sense that
\begin{align*}
\beta_{\min}^{1/2}C_b^{-1}\|u\|_0 \leq \|u\|_{b_H'} \leq \beta_{\max}^{1/2}C_b\|u\|_0 \qquad\text{for all } u\in U_{H,1}^D,
\end{align*}
for some positive constant $C_b$ that does not depend on the mesh or the coefficient $\beta$.
\end{lem}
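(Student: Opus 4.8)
The plan is to reduce the claim to the elementary fact that, on the reference element, the nodal (``lumped'') quadrature makes $u\mapsto u^2$ equivalent to $\|u\|_0^2$, and then to transport this to $\cT_H$ by scaling. First I would record that for $u,w\in U_{H,1}^D$ the restriction $(uw)|_e$ is a product of two affine (resp.\ multilinear) polynomials, hence continuous on $\overline e$, so each $I_e(uw)\in U_e$ is well defined; bilinearity and symmetry of $b_H'$ then follow from the linearity of $I_e$ together with $(u_1+u_2)w = u_1 w + u_2 w$, so it only remains to prove the norm equivalence (which in particular delivers positive definiteness).

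For a fixed $e\in\cT_H$ with affine reference map $F_e\colon\hat e\to e$, a change of variables gives $\int_e I_e(u^2)\dx = |\det DF_e|\int_{\hat e} I_{\hat e}(\hat u^2)\,\dd\hat x$ and $\|u\|_{0;e}^2 = |\det DF_e|\,\|\hat u\|_{0;\hat e}^2$ for $\hat u := u\circ F_e\in U_{\hat e}$, so the Jacobian factor cancels. On the finite-dimensional space $U_{\hat e}$ one has $\int_{\hat e} I_{\hat e}(\hat u^2)\,\dd\hat x = \sum_i \hat u(\hat x_i)^2\int_{\hat e}\hat\phi_i\,\dd\hat x$, where $\hat x_i$ are the vertices of $\hat e$ and $\hat\phi_i$ the associated $P_1$ (resp.\ $Q_1$) shape functions; since the weights $\int_{\hat e}\hat\phi_i\,\dd\hat x$ are strictly positive and a function in $U_{\hat e}$ vanishing at all vertices is identically zero, this expression is the square of a norm on $U_{\hat e}$, hence equivalent to $\|\cdot\|_{0;\hat e}^2$: there exist constants $0<\hat c\le\hat C$ depending only on $\hat e$ and the polynomial degree with $\hat c\,\|\hat u\|_{0;\hat e}^2 \le \int_{\hat e} I_{\hat e}(\hat u^2)\,\dd\hat x \le \hat C\,\|\hat u\|_{0;\hat e}^2$. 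Transporting back, $\hat c\,\|u\|_{0;e}^2 \le \int_e I_e(u^2)\dx \le \hat C\,\|u\|_{0;e}^2$ for every $e\in\cT_H$, with the same $\hat c,\hat C$.

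To finish, I would combine this with $\beta_{\min}\le\beta\le\beta_{\max}$ and the pointwise nonnegativity $I_e(u^2)\ge 0$ on $e$ --- valid because the value of a $P_1$/$Q_1$ interpolant at any point of $e$ is a convex combination of the nonnegative nodal values $u(\vx_i)^2$ --- to obtain $\beta_{\min}\hat c\,\|u\|_{0;e}^2 \le \int_e \beta I_e(u^2)\dx \le \beta_{\max}\hat C\,\|u\|_{0;e}^2$. Summing over $e\in\cT_H$ yields $\beta_{\min}\hat c\,\|u\|_0^2 \le b_H'(u,u) \le \beta_{\max}\hat C\,\|u\|_0^2$, which shows that $b_H'$ is an inner product on $U_{H,1}^D$ and gives the stated equivalence with $C_b := \max\{\hat c^{-1/2},\hat C^{1/2}\}$, a constant depending only on the element geometry and polynomial degree, and not on $\cT_H$ or $\beta$.

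There is no deep obstacle here --- this is the classical ``lumped mass is spectrally equivalent to consistent mass'' argument --- but two points do require care: verifying that the nodal quadrature weights $\int_{\hat e}\hat\phi_i$ are strictly positive (which is what makes $b_H'$ positive definite rather than merely positive semidefinite, and what restricts the reasoning to simplices and axis-parallel cubes), and checking that $I_e(u^2)$ is nonnegative on all of $e$ and not only at the nodes, since the lower bound via $\beta_{\min}$ would otherwise break down.
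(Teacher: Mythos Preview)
Your proof is correct and follows essentially the same route as the paper: establish norm equivalence of the lumped and $L^2$ norms on the reference element by finite-dimensionality, then transport to physical elements by scaling. The paper compresses everything after the reference-element equivalence into the phrase ``standard mapping argument,'' whereas you spell out both the Jacobian cancellation and the passage through $\beta$ via the pointwise nonnegativity of $I_e(u^2)$; this extra detail is welcome and the two points you flag as requiring care are exactly the ones the paper glosses over.
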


\begin{proof}
Let $\he$ be a reference element and define the bilinear operator $\hb(u,w):=\int_{\he} I_{\he}(uw)\;\dd\vx$. Since $\hb$ defines an inner product on the reference space $U_{\he}$, we have that $\|u\|_{\hb}:=(\hb(u,u))^{1/2}$ is a norm on $U_{\he}$. Since $U_{\he}$ is finite dimensional, and since all norms on finite dimensional spaces are equivalent, we have
\begin{equation*}
C_{b}^{-1}\|u\|_{0,\he} \leq \|u\|_{\hb} \leq C_{b}\|u\|_{0,\he} \qquad\text{for all } u\in U_{\he}
\end{equation*}
for some positive constant $C_{b}$, where $\|\cdot\|_{0,\he}$ denotes the $L^2(\he)$ norm. The Lemma then follows from a standard mapping argument.
\end{proof}

\begin{lem}
\label{lem:stab}
The projection operators $\Pi_{H,k}^D$ and $\Pi_H$ are stable in $L^2(\Omega)$ in the following sense:
\begin{subequations}
\begin{align}
\|\Pi_{H,k}^Du\|_{0;e} &\leq C\|u\|_{0;e} &&\text{for all } u\in L^2(e),\, e\in\cT_H,\, k=0,1, \label{eq:stabPiHD} \\
\|\Pi_H u\|_{0;e} &\leq C\|u\|_{0;N(e)} &&\text{for all } u\in L^2(e),\, e\in\cT_H. \label{eq:stabPiH}
\end{align}
\end{subequations}
\end{lem}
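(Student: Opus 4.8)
The plan is to establish the two stability estimates separately, starting from the local $L^2$-stability of the element-wise projections and then combining them for the composite operator $\Pi_H = \Pi_H^{\mathrm{av}} \circ \Pi_{H,1}^D$. For the first bound \eqref{eq:stabPiHD}, the operator $\Pi_{H,k}^D$ acts element-wise because $U_{H,k}^D$ is a discontinuous space, so its restriction to an element $e$ is the weighted $L^2(e)$ projection onto $U_{e,k}$ with weight $\beta$. I would test the defining relation $b(\Pi_{H,k}^Du, w) = b(u,w)$ with $w = \Pi_{H,k}^D u|_e$ (extended by zero), which gives $\|\Pi_{H,k}^Du\|_{b;e}^2 = b_e(u, \Pi_{H,k}^Du) \le \|u\|_{b;e}\|\Pi_{H,k}^Du\|_{b;e}$ by Cauchy--Schwarz, hence $\|\Pi_{H,k}^Du\|_{b;e} \le \|u\|_{b;e}$; then Lemma \ref{lem:b} (applied on $e$, which only uses the pointwise bounds on $\beta$) converts the weighted norm to the plain $L^2(e)$ norm at the cost of a constant $C = (\beta_{\max}/\beta_{\min})^{1/2}$.

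For the second bound \eqref{eq:stabPiH}, I would first control $\Pi_H^{\mathrm{av}}$. From the explicit formula \eqref{eq:defPiHav2}, the nodal value $(\Pi_H^{\mathrm{av}}u)(\vx_i)$ is a convex combination of the values $u_e(\vx_i)$ over elements $e$ touching $\vx_i$, with weights $\int_e \beta\phi_i\,\dx / \int_\Omega \beta\phi_i\,\dx$. Using a reference-element norm equivalence (all norms on the finite-dimensional space $U_{\hat e}$ are equivalent, exactly as in the proof of Lemma \ref{lem:bH}), the $L^2(e)$ norm of a polynomial in $U_e$ is comparable to $H^{d/2}$ times the maximum of its nodal values, and similarly $\int_e \beta\phi_i\,\dx$ is comparable to $\beta_{\min}$ (resp. $\beta_{\max}$) times $H^d$. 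Combining these, $\|\Pi_H^{\mathrm{av}}u\|_{0;e}^2$ is bounded by $C\,H^d$ times the sum of squared nodal values over the vertices of $e$, each of which is a convex-combination value, so by Jensen it is bounded by the maximum over neighbouring elements $e'$ of the squared nodal values of $u_{e'}$, which in turn is bounded by $C\,H^{-d}\|u\|_{0;e'}^2$. Summing over the (uniformly bounded, by mesh regularity) number of such neighbours gives $\|\Pi_H^{\mathrm{av}}u\|_{0;e} \le C\|u\|_{0;N(e)}$ for $u \in U_{H,1}^D$. Finally, $\|\Pi_H u\|_{0;e} = \|\Pi_H^{\mathrm{av}}\Pi_{H,1}^D u\|_{0;e} \le C\|\Pi_{H,1}^D u\|_{0;N(e)} \le C\|u\|_{0;N(e)}$, where the last step applies \eqref{eq:stabPiHD} element-wise on each element of $N(e)$ and uses that $\beta$-weights only contribute mesh-independent constants.

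The main obstacle I anticipate is the bookkeeping in the second estimate: one must carefully track that the ratio of weights $\int_e \beta\phi_i\,\dx / \int_\Omega \beta\phi_i\,\dx$ stays in $[0,1]$ (which is automatic, since it is a genuine convex combination) and that the scaling constants introduced by passing to the reference element via $\|\phi_i\|_{L^1(e)} \sim H^d$ and $\|u|_e\|_{0;e} \sim H^{d/2}\max|u_e(\vx_j)|$ are indeed independent of $H$, $h$, and the fine-scale structure of $\alpha,\beta$ — they depend only on the shape-regularity constants $\kappa,\gamma$ and the ratio $\beta_{\max}/\beta_{\min}$. None of the individual steps is deep, but assembling the neighbourhood sum correctly, so that the right-hand side genuinely involves only $\|u\|_{0;N(e)}$ and not a larger patch, requires using that $\Pi_{H,1}^D$ is element-local (so it does not enlarge the support) before applying the averaging estimate.
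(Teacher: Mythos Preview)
Your argument is correct. For \eqref{eq:stabPiHD} you do essentially what the paper does: exploit that $\Pi_{H,k}^D$ is the $b$-orthogonal projection onto a discontinuous space, hence acts element-wise, and convert between the weighted and unweighted $L^2$ norms via Lemma~\ref{lem:b}.

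For \eqref{eq:stabPiH}, however, you take a genuinely different route. The paper observes that $\Pi_H^{\mathrm{av}}$ is by definition the $b_H'$-orthogonal projection from $U_{H,1}^D$ onto $U_H$, so Lemma~\ref{lem:bH} immediately gives the global bound $\|\Pi_H^{\mathrm{av}} u\|_0 \le C\|u\|_0$; composing with \eqref{eq:stabPiHD} yields $\|\Pi_H u\|_0 \le C\|u\|_0$, and the local estimate then follows from the support property $(\Pi_H u)|_e = (\Pi_H(\chi_{N(e)}u))|_e$, which is read off directly from \eqref{eq:defPiHav2}. Your approach instead bypasses the orthogonal-projection structure of $\Pi_H^{\mathrm{av}}$ entirely and works from the explicit nodal formula \eqref{eq:defPiHav2}, using the convex-combination structure together with reference-element norm equivalences to bound $\|\Pi_H^{\mathrm{av}} u\|_{0;e}$ directly by $\|u\|_{0;N(e)}$. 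Both arguments are sound; the paper's is shorter and more structural (it reuses Lemma~\ref{lem:bH} rather than redoing a scaling argument), while yours is more elementary and makes the dependence on $\beta_{\max}/\beta_{\min}$ and the shape-regularity constants fully explicit. Your final concern about the patch size is handled automatically in the paper's version by the characteristic-function localization, whereas you need the (correct) observation that $\Pi_{H,1}^D$ is element-local before applying your averaging bound.
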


\begin{proof}
From Lemma \ref{lem:b} and the fact that $\Pi_{H,k}^D$ is an orthogonal projection operator with respect to the inner product $b$, it follows that $\|\Pi_{H,k}^D u\|_{0}\leq C\|u\|_0$ for all $u\in L^2(\Omega)$. The first inequality then follows from the fact that $(\Pi_{H,k}^D u)|_e = (\Pi_{H,k}^D (\chi_e u))|_e$, where $\chi_S$ denotes the characteristic function that equals $1$ in the subdomain $S \subseteq \Omega$ and $0$ elsewhere. 

For the second inequality, we first observe that $\Pi_H^\mathrm{av}$ is an orthogonal projection operator with respect to the inner product $b_H'$. From Lemma \ref{lem:bH}, it then follows that $\|\Pi^\mathrm{av}_Hu\|_0 \leq C\|u\|_0$ for all $u\in U_{H,1}^D$.
Using \eqref{eq:stabPiHD} and the definition of $\Pi_H$, we can then derive
\begin{align}
\label{eq:stabPiHD1a}
\|\Pi_Hu\|_0 &= \|\Pi_H^\mathrm{av}(\Pi_{H,1}^Du)\|_0 \leq C\|\Pi_{H,1}^Du\|_0 \leq C\|u\|_0 \qquad\text{for all } u\in L^2(\Omega).
\end{align}
From \eqref{eq:defPiHav2}, it follows that $(\Pi_{H}u)|_e = (\Pi_H(\chi_{N(e)}u))|_e$. Inequality \eqref{eq:stabPiH} then follows from \eqref{eq:stabPiHD1a}.
\end{proof}

\begin{lem}
\label{lem:interp}
We have the interpolation properties
\begin{subequations}
\begin{align}
\| u-\Pi_{H,k}^Du \|_{0;e} &\leq CH|u|_{1;e} &&\text{for all } u\in H_0^1(\Omega),\, e\in\cT_H,\, k=0,1, \label{eq:intPiHD} \\
\| u-\Pi_Hu \|_{0;e} &\leq CH|u|_{1;N(e)} &&\text{for all } u\in H_0^1(\Omega),\, e\in\cT_H. \label{eq:intPiH}
\end{align}
\end{subequations}
\end{lem}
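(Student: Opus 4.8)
The plan is to prove \eqref{eq:intPiH} first and then derive \eqref{eq:intPiHD} as a special case of the same argument (indeed, \eqref{eq:intPiHD} is the more classical of the two, so one could also invoke it as known). The key identity to exploit is that both $\Pi_{H,k}^D$ and $\Pi_H$ reproduce at least constants — in fact $\Pi_H$ reproduces all of $U_H$, since by construction $\Pi_H^{\mathrm{av}}$ is a projection onto $U_H$ and $\Pi_{H,1}^D$ acts as the identity on $U_{H,1}^D \supseteq U_H$, so $\Pi_H v = v$ for $v \in U_H$; similarly $\Pi_{H,k}^D$ fixes $U_{H,k}^D$ and hence constants. So for any $u \in H^1_0(\Omega)$ and any constant $c$ (or, better, any $c \in U_H$ chosen below), one has on a fixed element $e$
\begin{align*}
\|u - \Pi_H u\|_{0;e} = \|(u - c) - \Pi_H(u - c)\|_{0;e} \leq \|u-c\|_{0;e} + \|\Pi_H(u-c)\|_{0;e}.
\end{align*}

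First I would bound the second term using the $L^2$-stability estimate \eqref{eq:stabPiH} from Lemma~\ref{lem:stab}, which gives $\|\Pi_H(u-c)\|_{0;e} \leq C\|u-c\|_{0;N(e)}$. Thus the whole expression is controlled by $C\|u-c\|_{0;N(e)}$. Now I would choose $c$ to be the average of $u$ over the patch $N(e)$ (a constant, hence trivially in $U_H$ restricted appropriately — or more carefully, extend it; since only the $L^2$ norm on $N(e)$ enters, the constant function on $N(e)$ suffices after noting $\Pi_H$ acts locally as in the proof of Lemma~\ref{lem:stab}), and apply the Poincaré–Wirtinger inequality on $N(e)$: since $N(e)$ is a union of a bounded number of shape-regular elements of diameter $\lesssim H$ and is connected, $\|u - c\|_{0;N(e)} \leq C H |u|_{1;N(e)}$, with $C$ depending only on the mesh regularity constants $\kappa,\gamma$. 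Chaining these gives \eqref{eq:intPiH}.

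For \eqref{eq:intPiHD} the argument is identical but simpler: $\Pi_{H,k}^D$ is the $b$-orthogonal (hence, up to the equivalence in Lemma~\ref{lem:b}, essentially $L^2$-bounded) projection and acts elementwise, so with $c$ the average of $u$ over $e$ one gets $\|u - \Pi_{H,k}^D u\|_{0;e} \leq \|u-c\|_{0;e} + \|\Pi_{H,k}^D(u-c)\|_{0;e} \leq C\|u-c\|_{0;e} \leq CH|u|_{1;e}$ by \eqref{eq:stabPiHD} and Poincaré–Wirtinger on the single element $e$.

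The only mild obstacle is bookkeeping around the locality of $\Pi_H$: strictly speaking $\Pi_H$ is defined on $L^2(\Omega)$ and a "constant on $N(e)$" is not a global constant, so one must repeat the truncation trick from the proof of Lemma~\ref{lem:stab} — namely $(\Pi_H u)|_e = (\Pi_H(\chi_{N(e)} u))|_e$ — to reduce to functions supported in $N(e)$ before subtracting the patchwise mean, and one should also note that the number of elements in $N(e)$ and the aspect ratios are uniformly bounded by mesh regularity so that the Poincaré constant on $N(e)$ scales like $H$. None of this is deep; it is the standard Bramble–Hilbert/Deny–Lions type reasoning, and I expect the bulk of the write-up to be these routine localization remarks rather than any genuine difficulty.
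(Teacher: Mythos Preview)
Your approach is essentially the same as the paper's: both use that $\Pi_H$ reproduces $U_H$, subtract a suitable $w_H\in U_H$, invoke the $L^2$-stability from Lemma~\ref{lem:stab}, and finish with a Poincar\'e inequality on $N(e)$. The one point the paper handles more explicitly than you do is the boundary case: a nonzero constant is \emph{not} in $U_H\subset H_0^1(\Omega)$, so when $N(e)$ touches $\partial\Omega$ the paper takes $w_H\equiv 0$ and uses the Friedrichs-type Poincar\'e inequality (exploiting $u\in H_0^1(\Omega)$), while for interior patches it extends the patch mean $\overline{u}_{N(e)}$ to a genuine $w_H\in U_H$ with $w_H|_{N(e)}=\overline{u}_{N(e)}$ --- your ``locality trick'' with $\chi_{N(e)}$ does not by itself fix this, since $(\Pi_H c)|_e$ vanishes at boundary vertices of $e$.
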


\begin{proof}
The projection operator $\Pi_{H,k}^D$ is stable in $L^2(\Omega)$ (see Lemma \ref{lem:stab}) and preserves piece-wise constant functions. Inequality \eqref{eq:intPiHD} then follows from standard interpolation theory, see, e.g., \cite[Chapter 3.1]{Cia78}.

Now, let $\overline{u}_{N(e)}$ denote the average of $u$ on the domain $N(e)$. 
Note that $\Pi_H$ preserves functions in $U_H$, i.e., $\Pi_H w_H = w_H$ for all $w_H\in u_H$. Using the triangle inequality, Lemma \ref{lem:stab}, and the Poincar\'e inequality, we can then obtain, for any $w_H\in U_H$,
\begin{align*}
\|u-\Pi_Hu\|_{0;e} &= \|u-w_H - \Pi_H(u-w_H)\|_{0;e} \\
&\leq \|u-w_H\|_{0;e} + \|\Pi_H(u-w_H)\|_{0;e} \\
&\leq C \|u-w_H\|_{0;N(e)}.
\end{align*}
We can choose $w_H\equiv 0$ when $N(e)$ is attached to the boundary $\partial\Omega$ and choose any function $w_H \in U_H$ with $w_H\vert_{N(e)} = \overline{u}_{N(e)}$ otherwise. Inequality \eqref{eq:intPiH} then follows from the above and the Poincar\'e inequality.
\end{proof}

\begin{lem}
\label{lem:stab2}
The projection operator $\Pi_H$ is stable in $H^1_0(\Omega)$ in the following sense:
\begin{align*}
|\Pi_H u|_{1;e} &\leq C|u|_{1;N(e)} \qquad\text{for all } u\in H_0^1(e),\, e\in\cT_H. \label{eq:stabPiH2}
\end{align*}
\end{lem}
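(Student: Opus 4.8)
The plan is to reduce the $H^1_0(\Omega)$-stability of $\Pi_H$ to its $L^2(\Omega)$-stability from Lemma~\ref{lem:stab}, by means of a standard inverse inequality, after first subtracting from $u$ a finite element function that is locally constant. Fix $e\in\cT_H$ and $u\in H^1_0(\Omega)$. I would choose $w_H\in U_H$ exactly as in the proof of Lemma~\ref{lem:interp}: take $w_H\equiv 0$ when $N(e)$ is attached to $\partial\Omega$, and otherwise take any $w_H\in U_H$ with $w_H|_{N(e)}\equiv\overline{u}_{N(e)}$, the average of $u$ over $N(e)$. In either case $w_H$ is constant on $N(e)\supseteq e$, so that $|w_H|_{1;e}=0$, and the proof of Lemma~\ref{lem:interp} already supplies the bound $\|u-w_H\|_{0;N(e)}\leq CH\,|u|_{1;N(e)}$, which is precisely where the Poincar\'e (respectively Friedrichs) inequality on the patch $N(e)$ enters.

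Next, since $\Pi_H$ reproduces finite element functions, $\Pi_Hw_H=w_H$, so on $e$ one has $\Pi_Hu=\Pi_H(u-w_H)+w_H$ and hence $|\Pi_Hu|_{1;e}\leq|\Pi_H(u-w_H)|_{1;e}$. Because $\Pi_H(u-w_H)\in U_H$, I would then chain a standard inverse inequality, the local $L^2$-stability estimate~\eqref{eq:stabPiH}, and the bound on $\|u-w_H\|_{0;N(e)}$ from the first step to obtain
\begin{equation*}
|\Pi_Hu|_{1;e}\;\leq\;|\Pi_H(u-w_H)|_{1;e}\;\leq\;C\,H^{-1}\|\Pi_H(u-w_H)\|_{0;e}\;\leq\;C\,H^{-1}\|u-w_H\|_{0;N(e)}\;\leq\;C\,|u|_{1;N(e)};
\end{equation*}
here the factor $H^{-1}$ from the inverse estimate is exactly cancelled by the factor $H$ in the Poincar\'e bound, so the final constant is independent of $H$.

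I expect the only genuinely delicate point to be the same one that already appears in Lemma~\ref{lem:interp}: securing the Poincar\'e/Friedrichs estimate $\|u-w_H\|_{0;N(e)}\leq CH\,|u|_{1;N(e)}$ with a constant independent of $H$, which rests on $N(e)$ being a connected union of a uniformly bounded number of shape-regular elements of diameter comparable to $H$ and, in the boundary case, on the vanishing trace of $u$ on $\partial\Omega$. The remaining ingredients --- the inverse inequality, the reproduction property $\Pi_Hw_H=w_H$, and the $L^2$-stability of $\Pi_H$ --- are all standard or already established above, so I do not anticipate further obstacles.
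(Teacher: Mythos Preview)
Your proof is correct but takes a different route from the paper's. The paper introduces the Cl\'ement quasi-interpolation operator $\Pi_H^*$ (which already enjoys local $H^1$-stability and the approximation property $\|u-\Pi_H^*u\|_{0;e}\leq CH|u|_{1;N(e)}$), writes $|\Pi_Hu|_{1;e}\leq |\Pi_Hu-\Pi_H^*u|_{1;e}+|\Pi_H^*u|_{1;e}$, applies the inverse inequality to the first term, and then bounds $\|\Pi_Hu-\Pi_H^*u\|_{0;e}$ via the triangle inequality together with Lemma~\ref{lem:interp} and the Cl\'ement approximation property. You instead recycle the constant $w_H$ already constructed in the proof of Lemma~\ref{lem:interp}, exploit $|w_H|_{1;e}=0$, and go directly through the inverse inequality and the $L^2$-stability~\eqref{eq:stabPiH}. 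Your argument is more self-contained (no external operator needed) and slightly shorter; the paper's version has the mild advantage that the Cl\'ement operator packages the boundary/interior case distinction and the patch Poincar\'e constant once and for all. Both approaches hinge on the same ingredients---an inverse estimate, the reproduction property of $\Pi_H$, and a local Poincar\'e/Friedrichs bound on $N(e)$---and the delicate point you flag (uniform Poincar\'e constant on patches, with the Friedrichs variant when $N(e)$ meets $\partial\Omega$) is shared by both proofs.
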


\begin{proof}
Let $\Pi^*_H$ denote the Cl\'ement quasi-interpolation operator, see \cite{Cle75}. This quasi-interpolation operator satisfies
\begin{align*}
\| u-\Pi_H^*u \|_{0;e} &\leq CH|u|_{1;N(e)} &&\text{for all } u\in H_0^1(\Omega),\, e\in\cT_H, \\
|\Pi_H^*u|_{1;e} &\leq C|u|_{1;N(e)} &&\text{for all } u\in H_0^1(\Omega),\, e\in\cT_H.
\end{align*}
Using the triangle inequality, the inverse inequality, and Lemma \ref{lem:interp}, we then obtain
\begin{align*}
|\Pi_H u|_{1;e} &\leq |(\Pi_Hu - \Pi_H^*u)|_{1;e} + |\Pi_H^*u|_{1;e} \\
&\leq CH^{-1}\| \Pi_Hu - \Pi_H^*u \|_{0;e} + |\Pi_H^*u|_{1;e} \\
&\leq CH^{-1}(\| u - \Pi_Hu \|_{0;e} + \| u - \Pi_H^*u \|_{0;e}) + |\Pi_H^*u|_{1;e} \\
&\leq C|u|_{1;N(e)}. \qedhere
\end{align*}
\end{proof}

From these results, it follows that $\Pi_H$ satisfies conditions \eqref{eq:IH1loc}--\eqref{eq:IH3}. In particular, \eqref{eq:IH1loc} follows from Lemma \ref{lem:interp} and Lemma \ref{lem:stab2}, \eqref{eq:IH2loc} follows from Lemma \ref{lem:stab}, and \eqref{eq:IH3} follows immediately from the definition of $\Pi_H$.

\begin{lem}[Regularity of $u$]\label{lem:regEst}
Let $u$ denote the solution to \eqref{eq:WF}. Also, let $k\geq 1$ and 
\begin{subequations}
\label{eq:regcond}
\begin{align}
\partial_t^i f \in L^{\infty}(0,T;L^2(\Omega)) &\qquad\text{for }i\leq k, \\
\partial_t^if|_{t=0} \equiv 0 &\qquad\text{for }i\leq k-1.\label{eq:regcond_f}
\end{align}
\end{subequations}
Then $\partial_t^i u\in L^{\infty}(0,T;H^1_0(\Omega))$ for $i\leq k$, $\partial_t^{k+1}u \in L^{\infty}(0,T;L^2(\Omega))$, and $\partial_t^i u|_{t=0}\equiv 0$ for $i\leq k+1$. Furthermore, we have the estimates
\begin{align*}
\|\partial_t^{i+1} u\|_{\infty,0} + \|\partial_t^i u\|_{\infty,1} &\leq CT\|\partial_t^i f\|_{\infty,0} \qquad\text{for }i\leq k.
\end{align*}
\end{lem}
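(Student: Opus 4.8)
The statement is a standard energy-type regularity result for the weighted linear wave equation, so the plan is to proceed by induction on $i$ (equivalently on $k$), using repeated time differentiation of the weak formulation \eqref{eq:WF} together with the standard energy identity. First I would treat the base case. Differentiating \eqref{eq:WF} formally once in time gives $\langle \partial_t(\beta\partial_t(\partial_t u)),w\rangle + (\alpha\nabla(\partial_t u),\nabla w) = (\beta\,\partial_t f,w)$, with initial data $\partial_t u|_{t=0}=0$ and, from the PDE at $t=0$ together with $f|_{t=0}\equiv 0$, also $\partial_t^2 u|_{t=0}=0$. Testing the (undifferentiated) equation with $w=\partial_t u$ and integrating in time yields the energy identity
\begin{align*}
\tfrac12\|\partial_t u(\cdot,t)\|_b^2 + \tfrac12\|u(\cdot,t)\|_a^2 = \int_0^t (\beta f,\partial_t u)\ds,
\end{align*}
and Cauchy--Schwarz plus Lemma~\ref{lem:b}, Lemma~\ref{lem:a}, and Grönwall's inequality (or a direct estimate, absorbing the $\|\partial_t u\|_0$ factor) give $\|\partial_t u\|_{\infty,0} + \|u\|_{\infty,1} \leq CT\|f\|_{\infty,0}$, which is the case $i=0$.

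For the inductive step, suppose the claim holds up to index $i-1$. Let $v:=\partial_t^i u$. Differentiating \eqref{eq:WF} $i$ times in time, $v$ solves the same type of problem with right-hand side $\partial_t^i f$ and zero initial data: $v|_{t=0}=0$ follows from the inductive regularity, and $\partial_t v|_{t=0}=\partial_t^{i+1}u|_{t=0}=0$ follows by evaluating the $(i-1)$-times differentiated PDE at $t=0$ and using $\partial_t^{i-1}f|_{t=0}\equiv 0$ (this is exactly where hypothesis \eqref{eq:regcond_f} is needed). Then the same energy identity applied to $v$ — test the $i$-times-differentiated equation with $\partial_t v$ — gives $\tfrac12\|\partial_t v(\cdot,t)\|_b^2 + \tfrac12\|v(\cdot,t)\|_a^2 = \int_0^t(\beta\,\partial_t^i f,\partial_t v)\ds$, and the norm equivalences together with a Grönwall argument yield $\|\partial_t^{i+1}u\|_{\infty,0} + \|\partial_t^i u\|_{\infty,1}\leq CT\|\partial_t^i f\|_{\infty,0}$. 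The membership statements $\partial_t^i u\in L^\infty(0,T;H^1_0(\Omega))$ and $\partial_t^{k+1}u\in L^\infty(0,T;L^2(\Omega))$ are read off from the finiteness of these norms, and $\partial_t^i u|_{t=0}\equiv 0$ for $i\leq k+1$ is what the induction tracks along the way.

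The main obstacle is not the algebra but the justification of the formal time differentiations and of testing with $\partial_t^i u$ at low regularity: a priori $u$ is only known to lie in $L^\infty(0,T;H^1_0)$ with $\partial_t u\in L^\infty(0,T;L^2)$ and $\partial_t(\beta\partial_t u)\in L^\infty(0,T;H^{-1})$, so the quantities $\partial_t^{i}u$ and $\partial_t^{i+1}u$ do not yet have the regularity needed to make the pairings and the energy identity literally meaningful. The clean way around this is the Galerkin (or Faedo–Galerkin) method: carry out the estimates for the finite-dimensional approximations $u_m$ in a sequence of subspaces of $H^1_0(\Omega)$, where all derivatives exist classically, derive the bounds above uniformly in $m$, and then pass to the limit using weak-$*$ compactness in the relevant Bochner spaces; the bounds are preserved by weak-$*$ lower semicontinuity of the norms. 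Since the well-posedness in \cite{LioM72} is itself obtained this way (as noted after \eqref{eq:WF}), one can either cite that construction and note that the higher-order estimates come from the same approximating sequence, or reproduce the differentiated energy estimates at the Galerkin level; I would do the latter briefly and refer to \cite[Chapter~8]{LioM72} for the compactness and limit-passage details. The initial-value assertions $\partial_t^i u|_{t=0}\equiv 0$ follow because $u_m$ can be chosen with zero initial data and the trace-in-time maps are continuous on the spaces in question.
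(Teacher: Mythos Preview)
Your proposal is correct and matches the paper's approach: both argue via the Galerkin approximation from \cite{LioM72}, differentiate the discrete equations in time, and derive the energy estimates for the differentiated system. The only point worth flagging is that the paper explicitly obtains the factor $CT$ by using the Cauchy--Schwarz inequality and then absorbing $\sup_t\|\partial_t v\|_b$ (your ``direct estimate''), rather than Gr\"onwall; if you use a naive Gr\"onwall argument you may pick up an exponential dependence on $T$, so keep the direct absorption argument when writing it up.
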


\begin{proof}
The result follows from the proof of \cite[Theorem 8.1, Remark 8.2]{LioM72} by replacing the inner product $(\cdot,\cdot)$ by the weighted $L^2(\Omega)$ inner product $(\beta\cdot,\cdot)$, by taking higher-order time derivatives of the equations for the discrete approximation, and by using the Cauchy--Schwarz inequality instead of Gronwall's lemma for obtaining the energy estimates.
\end{proof}

\begin{lem}[Regularity of $\tu_H^\ell$]\label{lem:regEstH}
Let $\tu_H^\ell$ be the solution to~\eqref{eq:MLLOD} for $\ell \in \mathbb{N}\cup\{\infty\}$. Also, assume that $f$ satisfies \eqref{eq:regcond} for some $k\geq 1$. Then $\partial_t^i \tu_H^\ell\in L^{\infty}(0,T;H^1_0(\Omega))$ for $i\leq k$, $\partial_t^{k+1}\tu_H^\ell \in L^{\infty}(0,T;L^2(\Omega))$, and $\partial_t^i \tu_H^\ell|_{t=0}\equiv 0$ for $i\leq k+1$. Furthermore, we have the estimates
\begin{align*}
\|\Pi_H\partial_t^{i+1} \tu_H^\ell\|_{\infty,0} + \|\partial_t^i \tu_H^\ell\|_{\infty,1} &\leq CT\|\partial_t^i f\|_{\infty,0} \qquad\text{for }i\leq k.
\end{align*}
\end{lem}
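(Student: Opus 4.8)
The plan is to mirror the argument of Lemma \ref{lem:regEst} (i.e., the proof of \cite[Theorem 8.1, Remark 8.2]{LioM72}) in the discrete setting, taking advantage of the fact that $\tU_H^\ell$ is finite-dimensional so that all existence/regularity issues are trivial and only the energy estimates need care. First I would observe that \eqref{eq:MLLOD} is a linear system of ODEs with constant coefficients in the finitely many coordinates of $\tu_H^\ell$ with respect to the basis $\{\cS_H^\ell\phi_i\}$; the "mass" bilinear form $(v,w)\mapsto b_H'(\Pi_H v,\Pi_H w)$ is, by Lemma \ref{lem:bH} together with the $L^2$-stability of $\Pi_H$ (Lemma \ref{lem:stab}) and the equality $\Pi_H \cS_H^\ell w = w$ for $w\in U_H$, symmetric positive definite on $\tU_H^\ell$. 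Hence the system admits a unique smooth solution for all $t$, and since $f$ and its relevant time derivatives are bounded in time, $\partial_t^i \tu_H^\ell$ exists for all $i$; the assertions $\partial_t^i\tu_H^\ell|_{t=0}\equiv 0$ for $i\leq k+1$ follow by differentiating \eqref{eq:MLLOD} in time $i\leq k-1$ times, evaluating at $t=0$, and using $\partial_t^i f|_{t=0}\equiv 0$ for $i\leq k-1$ together with the zero initial data — exactly as in \cite{LioM72}.

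The core is the energy estimate. For the base case $i=0$, I would test \eqref{eq:MLLOD} with $w = \partial_t\tu_H^\ell(\cdot,t)$ (legitimate since $\tu_H^\ell$ is smooth in $t$ with values in the finite-dimensional $\tU_H^\ell$), obtaining
\begin{align*}
\tfrac12\tfrac{\dd}{\dd t}\Bigl(\|\Pi_H\partial_t\tu_H^\ell\|_{b_H'}^2 + \|\tu_H^\ell\|_a^2\Bigr) = b_H'(\Pi_H f,\Pi_H\partial_t\tu_H^\ell).
\end{align*}
Bounding the right-hand side by $\|\Pi_H f\|_{b_H'}\,\|\Pi_H\partial_t\tu_H^\ell\|_{b_H'} \leq \tfrac12(\|\Pi_H f\|_{b_H'}^2 + \|\Pi_H\partial_t\tu_H^\ell\|_{b_H'}^2)$, integrating in time from $0$ to $t$ using the zero initial conditions, and — as the lemma statement hints by citing "Cauchy--Schwarz instead of Gronwall" — estimating $\int_0^t\|\Pi_H\partial_t\tu_H^\ell\|_{b_H'}^2\ds$ by $t\cdot\sup_{[0,t]}\|\Pi_H\partial_t\tu_H^\ell\|_{b_H'}^2$ and absorbing, one gets $\sup_{[0,T]}(\|\Pi_H\partial_t\tu_H^\ell\|_{b_H'}^2 + \|\tu_H^\ell\|_a^2) \leq CT^2\|\Pi_H f\|_{\infty,b_H'}^2$ for $T$ in a fixed range, or more robustly one partitions $[0,T]$ into subintervals of fixed length and iterates. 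Then Lemmas \ref{lem:a} and \ref{lem:bH}, together with $\|\Pi_H f\|_0 \leq C\|f\|_0$ from Lemma \ref{lem:stab}, convert the weighted norms into the $H^1$- and $L^2$-norms and yield the claimed bound for $i=0$.

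For $i\geq 1$, I would differentiate \eqref{eq:MLLOD} $i$ times in $t$; since $b_H'$, $a$, and $\Pi_H$ are $t$-independent, this gives the same equation with $\tu_H^\ell$ replaced by $\partial_t^i\tu_H^\ell$ and $f$ replaced by $\partial_t^i f$, and with zero initial data for $\partial_t^i\tu_H^\ell$ and $\partial_t^{i+1}\tu_H^\ell$ (using $\partial_t^j f|_{t=0}\equiv 0$, $j\le k-1$, and the initial conditions established above). Applying the $i=0$ argument verbatim to this equation produces $\|\Pi_H\partial_t^{i+1}\tu_H^\ell\|_{\infty,0} + \|\partial_t^i\tu_H^\ell\|_{\infty,1} \leq CT\|\partial_t^i f\|_{\infty,0}$ for $i\leq k$. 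The only mild subtlety — and the step I expect to need the most care — is making the Cauchy--Schwarz-based estimate genuinely independent of $T$'s size (so that the constant $C$ does not blow up exponentially); this is handled by the standard device of splitting $[0,T]$ into finitely many intervals of length $O(\sqrt{\beta_{\min}/\beta_{\max}})$ (or simply of unit length, absorbing the dependence into $C$) and chaining the estimates, which introduces only the linear factor $T$ (resp. $T^2$ before taking square roots) visible in the statement. Everything else is a direct transcription of the continuous argument, with the $L^2$-stability and the projection identity $\Pi_H\cS_H^\ell w = w$ doing the bookkeeping that replaces the plain $L^2$ inner product by $b_H'(\Pi_H\cdot,\Pi_H\cdot)$.
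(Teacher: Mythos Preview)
Your proposal is correct and follows the same route as the paper's (very terse) proof: transcribe the Lions--Magenes energy argument to the finite-dimensional discrete system, differentiate in time, and invoke the norm equivalences of Lemmas~\ref{lem:a} and~\ref{lem:bH} together with the $L^2$-stability of $\Pi_H$. One small simplification worth noting, since you flag the $T$-dependence as the delicate step: rather than Young's inequality plus interval-splitting, the device the paper uses elsewhere (cf.\ the proofs of Lemmas~\ref{lem:errsemidisc} and~\ref{lem:locerr}) is to integrate $\partial_t E = b_H'(\Pi_H f,\Pi_H\partial_t\tu_H^\ell)$, apply Cauchy--Schwarz to the time integral to obtain $E(t)\leq T\,\|\Pi_Hf\|_{\infty,b_H'}\sup_{s}\sqrt{2E(s)}$, then take the supremum over $t$ and divide by $\sup_s\sqrt{E(s)}$; this yields the linear $T$ factor in one shot with no restriction on $T$.
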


\begin{proof}
The proof is analogous to that of Lemma \ref{lem:regEst}, but now we only have to take into account the discrete system and the fact that the norm $\|\cdot\|_{b_H'}$ is equivalent to $\|\cdot\|_0$ for the discrete space $U_H$ as stated in Lemma \ref{lem:b}. 
\end{proof}
	
\subsection{Discretisation error of $b$}

This subsection is devoted to showing that the discretisation error between $b$ and $b_H'$ is reasonably small. Therefore, let $r_H(u,w):= b(u,w)-b_H'(\Pi_Hu,\Pi_Hw)$. We can now derive the following bound.
\begin{lem}
\label{lem:rH}
The discretisation error of $b$ can be estimated by
\begin{align}
\label{eq:boundrH}
|r_H(u,w)| &\leq CH^2 \|u\|_1 \|w\|_1 \qquad\text{for all } u,w\in H_0^1(\Omega).
\end{align}
\end{lem}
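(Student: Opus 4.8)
The plan is to estimate the error $r_H(u,w) = b(u,w) - b_H'(\Pi_H u, \Pi_H w)$ by inserting intermediate terms and exploiting the approximation/stability properties established in Lemmas~\ref{lem:bH}, \ref{lem:stab}, and \ref{lem:interp}. First I would write $b(u,w) - b_H'(\Pi_H u, \Pi_H w) = \bigl(b(u,w) - b(\Pi_{H,1}^D u, \Pi_{H,1}^D w)\bigr) + \bigl(b(\Pi_{H,1}^D u,\Pi_{H,1}^D w) - b_H'(\Pi_H u,\Pi_H w)\bigr)$, or possibly a finer splitting through $b_H'(\Pi_{H,1}^D u, \Pi_{H,1}^D w)$, since $b_H'$ is only defined on $U_{H,1}^D$ and $\Pi_H = \Pi_H^{\mathrm{av}}\circ\Pi_{H,1}^D$. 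For the first group I would use that $\Pi_{H,1}^D$ is the $b$-orthogonal projection onto $U_{H,1}^D$, so that $b(u,w) - b(\Pi_{H,1}^D u, \Pi_{H,1}^D w) = b(u - \Pi_{H,1}^D u, w - \Pi_{H,1}^D w)$; combining the Cauchy--Schwarz inequality for $b$ with the interpolation bound \eqref{eq:intPiHD} (summed over elements) gives a term bounded by $CH^2|u|_1|w|_1$.

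Next I would handle the lumping error, i.e.\ the difference between $b$ and $b_H'$ applied to functions in $U_{H,1}^D$. On each element $e$ and on the reference element, $b|_e$ and the quadrature-based form $\int_e \beta I_e(\cdot\,\cdot)$ differ by a quantity that vanishes when one of the arguments is constant (since $I_e$ reproduces the product exactly in that case, both reducing to $\int_e \beta(\cdot)(\mathrm{const})$). This is the standard Bramble--Hilbert / quadrature-error argument: the element-local bilinear form error is controlled by $CH_e^2 |v|_{1;e}|w|_{1;e}$ for $v,w\in U_{e,1}$ — but there is a subtlety, because after projecting we are comparing $b(\Pi_{H,1}^D u, \Pi_{H,1}^D w)$ with $b_H'(\Pi_H u, \Pi_H w)$, and $\Pi_H u \ne \Pi_{H,1}^D u$ in general. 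I would therefore further split: compare $b$ with $b_H'$ on $\Pi_{H,1}^D u$ (pure quadrature error, bounded via Bramble--Hilbert by $CH^2|\Pi_{H,1}^D u|_1|\Pi_{H,1}^D w|_1$, then use $H^1$-stability of $\Pi_{H,1}^D$ plus an inverse estimate if needed, or more cleanly the bound $|\Pi_{H,1}^D u|_{1;e}\le CH^{-1}\|u - \bar u\|_{0;N(e)}\le C|u|_{1;N(e)}$), and then compare $b_H'(\Pi_{H,1}^D u, \Pi_{H,1}^D w)$ with $b_H'(\Pi_H u, \Pi_H w) = b_H'(\Pi_H^{\mathrm{av}}\Pi_{H,1}^D u, \Pi_H^{\mathrm{av}}\Pi_{H,1}^D w)$, which is an averaging error. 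Since $\Pi_H^{\mathrm{av}}$ is the $b_H'$-orthogonal projection onto $U_H$, this last difference equals $b_H'\bigl((1-\Pi_H^{\mathrm{av}})\Pi_{H,1}^D u,\,(1-\Pi_H^{\mathrm{av}})\Pi_{H,1}^D w\bigr)$, and I would bound $\|(1-\Pi_H^{\mathrm{av}})v\|_0$ for $v\in U_{H,1}^D$ by $CH|v|_{1,\mathrm{pw}}$ (piecewise $H^1$ seminorm of the discontinuous function), which again reduces to $C H |u|_{1;\cdot}$-type bounds via the stability of $\Pi_{H,1}^D$.

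Assembling the three pieces, each contributes $CH^2 |u|_1 |w|_1$, and $|\cdot|_1 \le \|\cdot\|_1$ gives \eqref{eq:boundrH}. I expect the main obstacle to be the bookkeeping around the averaging operator $\Pi_H^{\mathrm{av}}$: establishing the ``jump'' estimate $\|(1-\Pi_H^{\mathrm{av}})v\|_0 \le CH|v|_{1,\mathrm{pw}}$ for discontinuous piecewise-linears requires care because $\Pi_H^{\mathrm{av}}$ uses $\beta$-weighted nodal averages rather than plain $L^2$-projection, so one must verify on the reference configuration (patch of elements around a node) that the $\beta$-weighted averaging annihilates globally continuous functions and then invoke a Bramble--Hilbert argument uniformly in the weights (using only $0<\beta_{\min}\le\beta\le\beta_{\max}$), mirroring the reference-element argument already used in the proof of Lemma~\ref{lem:bH}. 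The quadrature error and the projection error are by contrast entirely standard.
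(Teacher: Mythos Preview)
Your three-term splitting is exactly the paper's decomposition, and your treatment of the first term (the $b$-orthogonality of $\Pi_{H,1}^D$ followed by Cauchy--Schwarz and \eqref{eq:intPiHD}) is identical to the paper's. The quadrature term also works as you outline; the paper phrases the same Bramble--Hilbert idea as ``$r_{H,2}$ vanishes whenever one argument lies in $U_{H,0}^D$'' and then bounds $\|\Pi_{H,1}^Du-\Pi_{H,0}^Du\|_0$ by the triangle inequality through $u$ and \eqref{eq:intPiHD}, thereby avoiding any appeal to $H^1$-stability of $\Pi_{H,1}^D$.

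There is, however, a genuine gap in your handling of the averaging term. The inequality you propose,
\[
\|(1-\Pi_H^{\mathrm{av}})v\|_0 \;\leq\; CH\,|v|_{1,\mathrm{pw}} \qquad\text{for all } v\in U_{H,1}^D,
\]
is false: take $v$ piecewise constant with different values on neighbouring elements. Then $|v|_{1,\mathrm{pw}}=0$, yet $\Pi_H^{\mathrm{av}}v\in U_H$ is continuous and hence $\Pi_H^{\mathrm{av}}v\neq v$, so $\|(1-\Pi_H^{\mathrm{av}})v\|_0>0$. Your Bramble--Hilbert sketch on nodal patches would at best produce a bound in terms of inter-element \emph{jumps} of $v$, not its broken $H^1$ seminorm, and controlling the jumps of $\Pi_{H,1}^Du$ by $H|u|_1$ would require additional trace estimates that you have not set up.

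The paper sidesteps this entirely. After using the $b_H'$-orthogonality of $\Pi_H^{\mathrm{av}}$ exactly as you do, it simply inserts $u$ via the triangle inequality:
\[
\|\Pi_{H,1}^Du - \Pi_H u\|_0 \;\leq\; \|u-\Pi_{H,1}^Du\|_0 + \|u-\Pi_H u\|_0 \;\leq\; CH\,\|u\|_1,
\]
invoking both parts of Lemma~\ref{lem:interp} directly. No jump estimate, no patch argument, no uniformity in $\beta$ needs to be verified --- the $\beta$-dependence is already absorbed in the constants of \eqref{eq:intPiHD} and \eqref{eq:intPiH}. The same trick (triangle inequality through $u$, this time with $\Pi_{H,0}^D$) is what makes the paper's treatment of the quadrature term cleaner than your route via $|\Pi_{H,1}^Du|_{1;e}$.
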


\begin{proof}
We can write
\begin{align*}
r_H(u,w) = r_{H,1}(u,w) + r_{H,2}(\Pi_{H,1}^D u,\Pi_{H,1}^D w) + r_{H,3}(\Pi_{H,1}^D u, \Pi_{H,1}^D w) \qquad\text{for all } u,w \in H_0^1(\Omega),
\end{align*}
where
\begin{align*}
r_{H,1}(u,w)  &:= b(u,w) - b(\Pi_{H,1}^Du, \Pi_{H,1}^Dw), &&u,w\in H_0^1(\Omega),  \\
r_{H,2}(u,w)  &:= b(u, w) - b_H'(u, w), && u,w\in U_{H,1}^D, \\
r_{H,3}(u,w)  &:= b_H'(u,w) - b_H'(\Pi_{H}^\mathrm{av}u, \Pi_{H}^\mathrm{av}w), &&u,w\in U_{H,1}^D.
\end{align*}
From the definition of $\Pi_{H,1}^D$ it follows that 
\begin{align*}
b(\Pi_{H,1}^Du,w)&=b(\Pi_{H,1}^Du,\Pi_{H,1}^Dw)=b(u,\Pi_{H,1}^Dw) \qquad\text{for all } u,w\in H_0^1(\Omega).
\end{align*} 
From this, we obtain
\begin{align*}
|r_{H,1}(u,w)| &= |b(u-\Pi_{H,1}^Du,w-\Pi_{H,1}^Dw)| \\
&\leq C\|u-\Pi_{H,1}^Du\|_0 \|w-\Pi_{H,1}^Dw\|_0 \\
&\leq CH^2\|u\|_1\|w\|_1,
\end{align*}
for all $u,w\in H_0^1(\Omega)$. Here, the second line follows from Lemma \ref{lem:b} and the Cauchy--Schwarz inequality, and the last line follows from Lemma \ref{lem:interp}. 
Furthermore, we have that
\begin{align*}
r_{H,2}(u,w)&=0 \qquad\text{for all } u\in U_{H,1}^D \text{ and }w\in U_{H,0}^D, \\
r_{H,2}(u,w)&=0 \qquad\text{for all } u\in U_{H,0}^D \text{ and }w\in U_{H,1}^D.
\end{align*}
From this, we derive
\begin{align*}
| r_{H,2}(\Pi_{H,1}^D u,\Pi_{H,1}^D w) | &= | r_{H,2}(\Pi_{H,1}^D u - \Pi_{H,0}^D u, \Pi_{H,1}^D w - \Pi_{H,0}^D w) | \\
&\leq C\| \Pi_{H,1}^D u - \Pi_{H,0}^D u \|_0 \| \Pi_{H,1}^D w - \Pi_{H,0}^D w \|_0 \\
&\leq CH^2 \|u\|_1 \|w\|_1
\end{align*}
for all $u,w\in H_0^1(\Omega)$. Here, the second line follows from the triangle inequality, Lemma~\ref{lem:b}, Lemma~\ref{lem:bH}, and the Cauchy--Schwarz inequality, and the last line follows from the triangle inequality and Lemma~\ref{lem:interp}.

From the definition of $\Pi_{H}^\mathrm{av}$, we further have
\begin{align*}
b_H'(\Pi_{H}^\mathrm{av}u, w)&=b_H'(\Pi_{H}^\mathrm{av} u,\ \Pi_{H}^\mathrm{av} w)=b_H'(u, \Pi_{H}^\mathrm{av} w) \qquad\text{for all } u,w\in U_{H,1}^D.
\end{align*}
From this and the fact that $\Pi_H=\Pi_H^\mathrm{av}\circ\Pi_{H,1}^D$, we can derive
\begin{align*}
|r_{H,3}(\Pi_{H,1}^D u, \Pi_{H,1}^D w)| &= |b_{H}'(\Pi_{H,1}^D u - \Pi_Hu, \Pi_{H,1}^D w-\Pi_H w)| \\
&\leq C\| \Pi_{H,1}^D u - \Pi_Hu \|_0 \| \Pi_{H,1}^D w-\Pi_H w \|_0  \\
&\leq CH^2\|u\|_1 \|w\|_1,
\end{align*}
for all $u,w\in H_0^1(\Omega)$, where the second line follows from Lemma~\ref{lem:bH} and the Cauchy--Schwarz inequality, and the last line follows from the triangle inequality and Lemma \ref{lem:interp}.

Combining all these results yields \eqref{eq:boundrH}.
\end{proof}

\subsection{Semi-discrete error analysis}\label{ss:semidiscerr} 

In this subsection, we investigate the error between the exact solution and the semi-discrete non-localized lumped approximation~$\tu_{H}$. The result reads as follows.
\begin{lem}[Semi-discrete error]
\label{lem:errsemidisc}
Let $u$ be the solution to \eqref{eq:WF} and let $\tu_H$ be the semi-discrete solution to \eqref{eq:MLLOD} for $\ell = \infty$. Assume that $\partial_t^k u\in L^{\infty}(0,T;H^1_0(\Omega))$ for $k\leq 3$, $f\vert_{t=0}\equiv 0$, and $\partial_t^k f \in L^{\infty}(0,T;H^1_0(\Omega))$ for $k\leq 1$. Then
\begin{equation*}
\label{eq:errsemidisc}
\begin{aligned}
\|\Pi_H\partial_t(u-\tu_H)\|_{\infty,0} + \|u-&\tu_H\|_{\infty,1} 
\\&
\leq CH^2\big(  \|\partial_t^2u\|_{\infty,1} + \|f\|_{\infty,1} + T\|\partial_t^3u\|_{\infty,1}  + T\| \partial_tf \|_{\infty,1} \big).
\end{aligned}
\end{equation*}
\end{lem}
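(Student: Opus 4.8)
The natural strategy is a standard energy argument for the error $\tilde\rho := \tu_H - \Pi_H^{a}u$, where $\Pi_H^{a}$ denotes the elliptic projection onto $\tU_H$ with respect to $a(\cdot,\cdot)$; equivalently, since $\tU_H$ and $W_H$ are $a$-orthogonal by~\eqref{eq:ortho} and $\tU_H = \cS_H(U_H)$, one has $\Pi_H^{a}u = \cS_H(\Pi_H u)$ and the elliptic projection error $u - \cS_H\Pi_H u$ satisfies the well-known LOD estimate $\|u - \cS_H\Pi_H u\|_1 \leq CH^2\|f\|_0$ (or more precisely, it is controlled by $H^2$ times the $a$-energy of the data, which by the regularity Lemma~\ref{lem:regEst} reduces to the $f$-norms appearing in the statement). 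So the error splits as $u - \tu_H = (u - \cS_H\Pi_H u) + (\cS_H\Pi_H u - \tu_H)$, and only the second term $\tilde\rho$ requires work.

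First I would derive the equation satisfied by $\tilde\rho$. Testing~\eqref{eq:MLLOD} (with $\ell=\infty$) against $w\in\tU_H$ and subtracting the exact equation~\eqref{eq:WF2}, while inserting $\cS_H\Pi_H u$, yields
\begin{align*}
b_H'(\Pi_H\partial_t^2\tilde\rho,\Pi_Hw) + a(\tilde\rho,w) = b_H'(\Pi_H\partial_t^2(\cS_H\Pi_H u),\Pi_Hw) - b(\partial_t^2 u, w) + \big(b(f,w) - b_H'(\Pi_Hf,\Pi_Hw)\big).
\end{align*}
Because $a(\cS_H\Pi_H u - u, w) = 0$ for $w\in\tU_H$ (Galerkin orthogonality of the elliptic projection, using~\eqref{eq:ortho}), the $a$-term drops out in the right-hand side consistency error, and the entire right-hand side is a sum of terms of the form $r_H(\cdot,\cdot)$ plus a term involving $\partial_t^2(\cS_H\Pi_H u - u)$. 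Using $\Pi_H\cS_H = \mathrm{id}$ on $U_H$ and the definition $r_H(v,w) = b(v,w) - b_H'(\Pi_Hv,\Pi_Hw)$, one rewrites the right-hand side as $-r_H(\partial_t^2 u, w) - r_H(f,w)$ — wait, more carefully, as a combination that Lemma~\ref{lem:rH} bounds by $CH^2(\|\partial_t^2 u\|_1 + \|f\|_1)\|w\|_1$, after also using the stability of $\Pi_H$ and $\cS_H$.

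Next, the core of the proof: the energy estimate. I would test the $\tilde\rho$-equation with $w = \partial_t\tilde\rho \in \tU_H$, giving
\begin{align*}
\tfrac12\tfrac{d}{dt}\Big( \|\Pi_H\partial_t\tilde\rho\|_{b_H'}^2 + \|\tilde\rho\|_a^2 \Big) = \langle \text{consistency}, \partial_t\tilde\rho\rangle.
\end{align*}
The right-hand side, which contains terms like $r_H(\partial_t^2 u + f, \partial_t\tilde\rho)$, is handled by integrating by parts in time to move the time derivative off $\partial_t\tilde\rho$ onto the smooth data (this is what converts $\partial_t^2 u, f$ into $\partial_t^3 u, \partial_t f$ and produces the extra factor $T$ in the error bound, via a Cauchy--Schwarz / Young step and integration over $(0,t)$). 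One then absorbs $\|\Pi_H\partial_t\tilde\rho\|_0$ using the norm equivalences in Lemmas~\ref{lem:b}, \ref{lem:bH}, and concludes a bound on $\|\Pi_H\partial_t\tilde\rho\|_{\infty,0} + \|\tilde\rho\|_{\infty,1}$ by $CH^2(\|\partial_t^2 u\|_{\infty,1} + \|f\|_{\infty,1} + T\|\partial_t^3 u\|_{\infty,1} + T\|\partial_t f\|_{\infty,1})$. Combining with the elliptic projection estimate via the triangle inequality gives the claim. The main obstacle I anticipate is bookkeeping the consistency terms carefully — in particular making sure that every occurrence of $b$ versus $b_H'$ versus the projections $\Pi_H, \cS_H$ is accounted for so that the leftover is genuinely a sum of $r_H$-terms (controlled by Lemma~\ref{lem:rH}) and an elliptic-projection time-derivative term (controlled by the $\ell=\infty$ LOD estimate applied to $\partial_t^2 u$), with no uncontrolled cross terms; the time integration-by-parts that produces the $T$-weighted higher-derivative terms also needs the zero initial conditions on $f$ and $u$ to kill boundary contributions at $t=0$.
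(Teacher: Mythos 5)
Your route is workable but genuinely different from the paper's, and it leans on one estimate that is not free. You follow the classical Ritz-projection (Baker/Dupont--Wheeler) splitting $u-\tu_H = (u-\cS_H\Pi_H u) + (\cS_H\Pi_H u - \tu_H)$ and run the energy argument only on the discrete part $\tilde\rho\in\tU_H$. The paper instead avoids any splitting: it observes that for $w\in W_H=\ker\Pi_H$ every term of \eqref{eq:MLLOD} vanishes (using $\Pi_H w=0$ and the orthogonality \eqref{eq:ortho}), so the semi-discrete equation actually holds for \emph{all} $w\in H^1_0(\Omega)$; subtracting it from \eqref{eq:WF2} then gives the clean identity $b_H'(\Pi_H\partial_t^2 e_H,\Pi_H w)+a(e_H,w)=r_H(f-\partial_t^2u,w)$ for the \emph{full} error $e_H=u-\tu_H$, which can be tested directly with $w=\partial_te_H\notin\tU_H$. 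From there the two arguments coincide: the same source term $r_H(f-\partial_t^2u,\cdot)$, the same time integration by parts to shift derivatives onto the data (producing the $T$-weighted $\partial_t^3u$ and $\partial_tf$ terms, with the zero initial data killing the $t=0$ boundary term), and the same division by $\sup_t E_H^{1/2}$. Your approach buys familiarity; the paper's buys the absence of any elliptic projection error to estimate. Note also that your worry about "an elliptic-projection time-derivative term" resolves itself: since $\Pi_H\cS_H=\mathrm{id}$ on $U_H$, one has $\Pi_H(u-\cS_H\Pi_H u)\equiv 0$, hence $\Pi_H\partial_t^2(u-\cS_H\Pi_H u)\equiv 0$, and the consistency error collapses to pure $r_H$-terms — you should make this explicit, because bounding $\partial_t^2(u-\cS_H\Pi_Hu)$ by an elliptic estimate applied to $\partial_t^2u$ would require regularity of $\partial_t^2 f-\partial_t^4u$ that the lemma does not assume.

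The genuine gap is the elliptic projection estimate itself. As written, $\|u-\cS_H\Pi_H u\|_1\leq CH^2\|f\|_0$ is false in general: the standard LOD argument with $L^2$ data gives only $O(H)$ in the energy norm. Since $u-\cS_H\Pi_H u\in W_H$, one gets $\|u-\cS_H\Pi_H u\|_a^2 = b(g,(1-\Pi_H)(u-\cS_H\Pi_H u))$ with $g=f-\partial_t^2u$, and to gain the second power of $H$ you must also subtract an approximation of $g$, i.e.\ you need $g\in H^1_0(\Omega)$ \emph{and} you need to handle the fact that $\Pi_H$ is not the $b$-orthogonal projection (it is only $b_H'$-orthogonal through $\Pi_H^{\mathrm{av}}$), so a residual quadrature term of the type estimated in Lemma~\ref{lem:rH} appears. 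The correct statement is $\|u-\cS_H\Pi_H u\|_1\leq CH^2\|f-\partial_t^2u\|_1$, whose proof uses Lemma~\ref{lem:interp} twice together with the $r_H$-machinery — essentially the same work the paper spends on Lemma~\ref{lem:rH}. Your final norms are the right ones, but this step must be proved, not cited, and with the $H^1$ norm of the data rather than $\|f\|_0$.
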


\begin{proof}
By definition of $W_H$, $\Pi_Hw\equiv 0$ for all $w\in W_H$. From the definition of $\cS_H$, it also follows that $a(\tu_H,w)=0$ for all $w\in W_H$. This implies that \eqref{eq:MLLOD} also holds for test functions $w'\in W_H$, because each individual term vanishes in that case. Since $H_0^1(\Omega)=\tU_H\oplus W_H$, we can therefore write
\begin{align}
\label{eq:MLLOD1a}
b_H'(\Pi_H \partial_t^2\tu_H(\cdot,t), \Pi_Hw) + a(\tu_H(\cdot,t),w) &= b_H'(\Pi_Hf(\cdot,t),\Pi_Hw) 
\end{align}
for all $w\in H_0^1(\Omega)$ and $t\in(0,T)$. This means that equation~\eqref{eq:MLLOD} holds true for all $w\in H_0^1(\Omega)$.

Now, due to the regularity assumption, $u$ satisfies \eqref{eq:WF2}. We subtract \eqref{eq:MLLOD1a} from \eqref{eq:WF2} and set $e_H:=u-\tu_H$. Then we obtain the following equation for the error,
\begin{align}
\label{eq:err}
b_H'(\Pi_H \partial_t^2 e_H(\cdot,t), \Pi_Hw) + a(e_H(\cdot,t), w) &= r_H\big(f(\cdot,t)-\partial_t^2u(\cdot,t),w\big)
\end{align}
for all $w\in H_0^1(\Omega)$ and almost every $t\in(0,T)$.
Note that $\partial_t \tu_H(\cdot, t) \in H^1_0(\Omega)$ by Lemma~\ref{lem:regEstH}. Therefore, we can insert $w=\partial_t e_H(\cdot, t)$ into \eqref{eq:err} and obtain
\begin{align}
\label{eq:err2}
\partial_tE_H(t) &= r_H\big(f(\cdot,t)-\partial_t^2u(\cdot,t),\partial_te_H(\cdot,t)\big) \quad \text{for a.e. }t\in(0,T),
\end{align}
where
\begin{align}\label{eq:energy}
E_H(t) &:= \frac12 b_H'\big(\Pi_H \partial_t e_H(\cdot,t), \Pi_H\partial_te_H(\cdot,t) \big) + \frac12 a\big( e_H(\cdot,t), e_H(\cdot,t) \big).
\end{align}
From Lemma \ref{lem:a} and Lemma \ref{lem:b}, it follows that
\begin{equation}\label{eq:energyEquiv}
\begin{aligned}
E_H^{1/2}(t) &\leq C (\|\Pi_H\partial_te_H(\cdot,t)\|_0 + \|e_H(\cdot,t)\|_1), \\
E_H^{1/2}(t) &\geq C^{-1} (\|\Pi_H\partial_te_H(\cdot,t)\|_0 + \|e_H(\cdot,t)\|_1)
\end{aligned}
\end{equation}
for any $t\in(0,T)$.
Note that $e_H|_{t=0}=\partial_te_H|_{t=0} \equiv 0$ due to the initial conditions of $u$ and $\tu_H$. If we integrate \eqref{eq:err2} over some time interval $(0,t)\subseteq (0,T)$ and apply integration by parts to the right-hand side, we obtain
\begin{align*}
E_H(t) &= r_H\big(f(\cdot,t)-\partial_t^2u(\cdot,t), e_H(\cdot,t)\big) - 
\int_0^{t} r_H\big(\partial_tf(\cdot,t')-\partial_t^3u(\cdot,t'), e_H(\cdot,t')\big) \;\dd t' \\
&\leq CH^2 \left(\|f-\partial_t^2u\|_{\infty,1} + T\|\partial_tf-\partial_t^3u\|_{\infty,1}\right) \|e_H\|_{\infty,1} \\
&\leq CH^2 \left(\|f-\partial_t^2u\|_{\infty,1} + T\|\partial_tf-\partial_t^3u\|_{\infty,1}\right) \left( \sup_{t'\in(0,T)} E_H^{1/2}(t')\right),
\end{align*}
where the second line follows from Lemma \ref{lem:rH} and the last line from \eqref{eq:energyEquiv}.
Taking the supremum over $t \in(0,T)$ and dividing by $\sup_{t\in(0,T)}E_H^{1/2}(t)$ gives
\begin{align}
\sup_{t\in(0,T)}E_H^{1/2}(t) &\leq CH^2 \left(\|f-\partial_t^2u\|_{\infty,1} + T\|\partial_tf-\partial_t^3u\|_{\infty,1}\right).
\end{align}
The lemma then follows from \eqref{eq:energyEquiv} and the triangle inequality.
\end{proof}

The following result now immediately follows from Lemma~\ref{lem:errsemidisc} and Lemma~\ref{lem:regEst}.

\begin{cor}
\label{cor:errsemidisc}
Let $u$ and $\tu_H$ be defined as in Lemma~\ref{lem:errsemidisc}. If $f$ satisfies \eqref{eq:regcond} for $k=3$ and $\partial_t^k f\in L^{\infty}(0,T;H_0^1(\Omega))$ for $k\leq 1$, then
\begin{align*}
\|\Pi_H\partial_t(u-\tu_H)\|_{\infty,0} + \|u-&\tu_H\|_{\infty,1} 
\leq CH^2\big( \|f\|_{\infty,1} + T\| \partial_tf \|_{\infty,1} + T\|\partial_t^2f\|_{\infty,0}  + T^2 \|\partial_t^3 f\|_{\infty,0} \big).
\end{align*}
\end{cor}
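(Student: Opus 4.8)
The plan is to combine the semi-discrete error estimate in Lemma~\ref{lem:errsemidisc} with the regularity results for the exact solution $u$ from Lemma~\ref{lem:regEst}, which simply amounts to bounding the $u$-dependent terms on the right-hand side of Lemma~\ref{lem:errsemidisc} by data terms. First I would verify that the hypotheses of Lemma~\ref{lem:errsemidisc} are satisfied: under the assumption that $f$ fulfils \eqref{eq:regcond} for $k=3$, Lemma~\ref{lem:regEst} (applied with $k=3$) yields $\partial_t^i u \in L^\infty(0,T;H^1_0(\Omega))$ for $i \leq 3$, $f|_{t=0}\equiv 0$ is part of \eqref{eq:regcond_f}, and the extra assumption $\partial_t^k f \in L^\infty(0,T;H^1_0(\Omega))$ for $k\leq 1$ is carried over directly. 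Hence Lemma~\ref{lem:errsemidisc} applies.

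Next I would rewrite the bound from Lemma~\ref{lem:errsemidisc},
\begin{equation*}
\|\Pi_H\partial_t(u-\tu_H)\|_{\infty,0} + \|u-\tu_H\|_{\infty,1} \leq CH^2\big( \|\partial_t^2u\|_{\infty,1} + \|f\|_{\infty,1} + T\|\partial_t^3u\|_{\infty,1} + T\|\partial_tf\|_{\infty,1}\big),
\end{equation*}
and then estimate the two terms involving $u$. The estimate in Lemma~\ref{lem:regEst} reads $\|\partial_t^{i+1}u\|_{\infty,0} + \|\partial_t^i u\|_{\infty,1} \leq CT\|\partial_t^i f\|_{\infty,0}$ for $i\leq k$. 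Taking $i=2$ gives $\|\partial_t^2 u\|_{\infty,1} \leq CT\|\partial_t^2 f\|_{\infty,0}$, and taking $i=3$ gives $\|\partial_t^3 u\|_{\infty,1} \leq CT\|\partial_t^3 f\|_{\infty,0}$ (here the assumption that $f$ satisfies \eqref{eq:regcond} for $k=3$ is exactly what allows $i=3$). Substituting these two bounds and absorbing the extra factor $T$ into the $T\|\partial_t^3 u\|_{\infty,1}$ term (which becomes $T^2\|\partial_t^3 f\|_{\infty,0}$) yields precisely
\begin{equation*}
\|\Pi_H\partial_t(u-\tu_H)\|_{\infty,0} + \|u-\tu_H\|_{\infty,1} \leq CH^2\big( \|f\|_{\infty,1} + T\|\partial_tf\|_{\infty,1} + T\|\partial_t^2 f\|_{\infty,0} + T^2\|\partial_t^3 f\|_{\infty,0}\big),
\end{equation*}
which is the claimed estimate.

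There is no real obstacle here: the corollary is a direct consequence of the two preceding results, and the only bookkeeping required is matching the time-derivative indices and keeping track of the powers of $T$. The one point worth a second glance is ensuring that the regularity hypotheses line up---namely that \eqref{eq:regcond} with $k=3$ indeed delivers both $\partial_t^2 u, \partial_t^3 u \in L^\infty(0,T;H^1_0(\Omega))$ and the corresponding quantitative bounds with $\partial_t^2 f$ and $\partial_t^3 f$ on the right---but this is exactly the content of Lemma~\ref{lem:regEst}. Thus the proof is essentially a one-line invocation of Lemma~\ref{lem:errsemidisc} followed by the substitution of the Lemma~\ref{lem:regEst} estimates for $i=2$ and $i=3$.
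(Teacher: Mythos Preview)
Your proposal is correct and follows precisely the approach indicated in the paper, which merely states that the corollary ``immediately follows from Lemma~\ref{lem:errsemidisc} and Lemma~\ref{lem:regEst}.'' You have simply spelled out the verification of hypotheses and the substitution of the regularity bounds for $i=2$ and $i=3$, which is exactly the intended argument.
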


\subsection{Localization error}\label{ss:locerr} 

As a next step, we investigate how the localization with a parameter $\ell \in \mathbb{N}$ influences the error. Before we prove the corresponding error estimate, we require a localization result that is well known for the LOD method in an elliptic context, see, e.g., \cite{HenP13,Mai20,MalP20}.
\begin{lem}[Elliptic localization error]\label{lem:locerrEll}
Let $v_H \in U_H$ and $\ell \in \mathbb{N}$. Then
\begin{equation*}\label{eq:decay1}
|(\cS_H-\cS_{H}^\ell)v_H|_1 \leq C_\mathrm{loc}\,\ell^{d/2}\exp(-c_\mathrm{dec}\ell)|v_H|_1
\end{equation*}
with constants $c_\mathrm{dec}$ and $C_\mathrm{loc}$ that depend on $\alpha_{\max}/\alpha_{\min}$ but not on the mesh size or the scales at which $\alpha$ or $\beta$ vary. 
\end{lem}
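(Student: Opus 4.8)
\textbf{Proof strategy for Lemma~\ref{lem:locerrEll}.}
The plan is to follow the classical LOD localization argument, adapted to the present $\beta$-dependent quasi-interpolation operator $\Pi_H$. The key observation is that $\Pi_H$ still satisfies the locality, stability, and projection properties \eqref{eq:IH1loc}--\eqref{eq:IH3}, as established in the preliminary subsection, so the abstract decay machinery for the correction operators applies. First I would recall that $\cS_H v_H = v_H - \cQ v_H$ and $\cS_H^\ell v_H = v_H - \cQ^\ell v_H$, so that the quantity to estimate is $|(\cQ - \cQ^\ell) v_H|_1$, where $\cQ = \sum_{e\in\cT_H}\cQ_e$ and $\cQ^\ell = \sum_{e\in\cT_H}\cQ^\ell_e$, with $\cQ_e := \cQ_e^\infty$ the non-localized element corrector.

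The main technical ingredient is the \emph{exponential decay of the element correctors}: for each $e\in\cT_H$, one shows that $|\cQ_e v_H|_{1;\Omega\setminus\Nb^\ell(e)}$ decays like $\exp(-c\ell)$ times $|v_H|_{1;e}$ (or, more precisely, times $\|\alpha^{1/2}\nabla v_H\|_{0;e}$). This is proved by the standard Caccioppoli / cut-off function argument: choose a Lipschitz cut-off $\eta$ that is $0$ on $\Nb^{\ell-2}(e)$ and $1$ outside $\Nb^{\ell-1}(e)$, test the defining equation \eqref{eq:loccor} for $\cQ_e v_H$ (with $\ell=\infty$) against $w=(1-\Pi_H)(\eta\,\cQ_e v_H)\in W_H$, and exploit that $\nabla v_H$ is supported on $e$ together with the local stability/approximation bounds \eqref{eq:IH1loc}--\eqref{eq:IH2loc} of $\Pi_H$ to absorb the commutator terms coming from $\eta$. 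This yields a contraction $|\cQ_e v_H|_{1;\Omega\setminus\Nb^{\ell}(e)}^2 \leq \theta\,|\cQ_e v_H|_{1;\Omega\setminus\Nb^{\ell-c_0}(e)}^2$ with $\theta<1$ depending only on $\alpha_{\max}/\alpha_{\min}$ and $C_\mathrm{I}$; iterating over the ring index gives the exponential bound with rate $c_\mathrm{dec}$. Next, one estimates the difference of a single element corrector and its truncation, $|\cQ_e v_H - \cQ_e^\ell v_H|_1 \leq C\exp(-c_\mathrm{dec}\ell)\|\alpha^{1/2}\nabla v_H\|_{0;e}$, again by a cut-off argument comparing the two Galerkin problems on $W_H$ and $W_H(\Nb^\ell(e))$ and using the decay just obtained. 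Finally, summing over $e$ and using that each point of $\Omega$ lies in only $O(\ell^d)$ of the patches $\Nb^\ell(e)$ (finite overlap, by mesh regularity), a Cauchy--Schwarz argument in the element index produces the extra algebraic factor $\ell^{d/2}$:
\begin{equation*}
|(\cQ-\cQ^\ell)v_H|_1^2 \leq C\,\ell^d \sum_{e\in\cT_H} |\cQ_e v_H - \cQ_e^\ell v_H|_1^2 \leq C\,\ell^d \exp(-2c_\mathrm{dec}\ell)\sum_{e\in\cT_H}\|\alpha^{1/2}\nabla v_H\|_{0;e}^2,
\end{equation*}
and the last sum is $\|\alpha^{1/2}\nabla v_H\|_0^2 \leq \alpha_{\max}|v_H|_1^2$, which gives the claimed bound after taking square roots and absorbing constants.

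The step I expect to be the main obstacle is the Caccioppoli-type decay estimate for the element correctors, i.e.\ the careful bookkeeping of the cut-off commutator terms $\cQ_e v_H - \Pi_H(\eta\,\cQ_e v_H)$ and ensuring that all constants depend only on $\alpha_{\max}/\alpha_{\min}$, $C_\mathrm{I}$, and the mesh regularity, and in particular \emph{not} on $\beta$ or the fine scales. This is where the specific properties of the $\beta$-weighted operator $\Pi_H$ matter: one must verify that \eqref{eq:IH1loc}--\eqref{eq:IH3} are genuinely sufficient (they are — this is exactly the abstract setting of \cite{HenP13,MalP20}) so that the proof goes through verbatim once those properties are in hand. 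Since the excerpt has already shown that $\Pi_H$ satisfies \eqref{eq:IH1loc}--\eqref{eq:IH3} with a constant $C_\mathrm{I}$ independent of the fine scales, the result follows by invoking the established elliptic LOD localization theory; I would therefore present the proof as a reduction to \cite[e.g.]{HenP13,MalP20}, sketching the cut-off argument for completeness and emphasizing only the points where the $\beta$-dependence of $\Pi_H$ must be tracked.
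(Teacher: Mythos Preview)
Your proposal is correct and matches the paper's approach: the paper does not prove Lemma~\ref{lem:locerrEll} at all but simply states it as a known localization result for the LOD, citing \cite{HenP13,Mai20,MalP20}, exactly as you propose to do. Your additional sketch of the Caccioppoli/cut-off decay argument and the observation that only the abstract properties \eqref{eq:IH1loc}--\eqref{eq:IH3} of $\Pi_H$ are needed (so the $\beta$-dependence is irrelevant here) is accurate and goes beyond what the paper provides.
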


\begin{lem}[Localization error]\label{lem:locerr}
Let $\tu_H$ be the solution to~\eqref{eq:MLLOD} for $\ell = \infty$ and $\tu_H^\ell$ the solution to~\eqref{eq:MLLOD} for $\ell \in \mathbb{N}$. Also, assume that $\partial_t^k\tu_H \in L^{\infty}(0,T;H_0^1(\Omega))$ for $k\leq 1$. Then the error $\tu_H - \tu_H^\ell$ fulfils the following error estimate,
\begin{equation*}\label{eq:locerr}
\begin{aligned}
\|\Pi_H\partial_t(\tu_H - \tu_H^\ell)\|_{\infty,0} + \|\tu_H - \tu_H^\ell\|_{\infty,1} &\leq C\,\ell^{d/2}\exp(-c_\mathrm{dec}\ell)\big(\| \tu_H\|_{\infty,1}+T\,\| \partial_t\tu_H\|_{\infty,1}\big).
\end{aligned}
\end{equation*}
\end{lem}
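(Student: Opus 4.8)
The plan is to mimic the energy argument already used for the semi-discrete error in Lemma~\ref{lem:errsemidisc}, but now with the localization defect $(\cS_H - \cS_H^\ell)$ playing the role of the source perturbation. First I would set $e_H^\ell := \tu_H - \tu_H^\ell$ and write down the defect equation satisfied by $e_H^\ell$. Both $\tu_H = \cS_H u_H$ and $\tu_H^\ell = \cS_H^\ell u_H^\ell$ solve~\eqref{eq:MLLOD} with their respective spaces, so subtracting the two weak formulations (after noting, as in the proof of Lemma~\ref{lem:errsemidisc}, that the equation for the non-localized solution extends to all test functions $w \in H_0^1(\Omega)$ because the mass terms involve $\Pi_H$ which annihilates $W_H$) yields an identity of the form
\begin{align*}
b_H'(\Pi_H \partial_t^2 e_H^\ell, \Pi_H w) + a(e_H^\ell, w) = a\big((\cS_H - \cS_H^\ell)\,\tu_H^{\text{something}}, w\big)
\end{align*}
up to terms that vanish. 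The key algebraic point is that $\Pi_H \tu_H = \Pi_H \tu_H^\ell$ fails in general, so one has to track carefully how $\cS_H$ versus $\cS_H^\ell$ enter; the cleanest route is to use that $u_H$ and $u_H^\ell$ satisfy the same $b_H'$-mass equation tested against $U_H$, and that the stiffness terms $a(\cS_H u_H, \cS_H w)$ and $a(\cS_H^\ell u_H^\ell, \cS_H^\ell w)$ differ by a term controlled by $(\cS_H - \cS_H^\ell)$ applied to $u_H$ (or to $\tu_H$), whose $H^1$ norm is bounded via Lemma~\ref{lem:locerrEll} by $C_\mathrm{loc}\,\ell^{d/2}\exp(-c_\mathrm{dec}\ell)|u_H|_1 \le C\,\ell^{d/2}\exp(-c_\mathrm{dec}\ell)\|\tu_H\|_{\infty,1}$, using the stability $\|\cQ^\ell v\|_1 \le C\|v\|_1$ and $\|\cS_H v\|_1 \simeq \|v\|_1$ on $U_H$.

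Next I would introduce the energy $E_H^\ell(t) := \tfrac12 b_H'(\Pi_H \partial_t e_H^\ell, \Pi_H \partial_t e_H^\ell) + \tfrac12 a(e_H^\ell, e_H^\ell)$, which by Lemma~\ref{lem:a} and Lemma~\ref{lem:bH} is equivalent to $(\|\Pi_H \partial_t e_H^\ell\|_0 + \|e_H^\ell\|_1)^2$ up to constants, exactly as in~\eqref{eq:energyEquiv}. Testing the defect equation with $w = \partial_t e_H^\ell(\cdot,t)$ (admissible since $\partial_t \tu_H, \partial_t \tu_H^\ell \in H_0^1(\Omega)$ by Lemma~\ref{lem:regEstH}) gives $\partial_t E_H^\ell(t) = a\big((\cS_H - \cS_H^\ell)\, g(\cdot,t),\, \partial_t e_H^\ell(\cdot,t)\big)$ for an appropriate $g$ built from $\tu_H$. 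I would then integrate over $(0,t)$, integrate by parts in time on the right-hand side to move the time derivative off $e_H^\ell$ (picking up a boundary term at time $t$ and an integral of $\partial_t$ of the defect, hence the appearance of $\partial_t \tu_H$ and the factor $T$), use the zero initial conditions $e_H^\ell|_{t=0} = \partial_t e_H^\ell|_{t=0} \equiv 0$, and bound everything by Cauchy--Schwarz in the $a$-inner product together with Lemma~\ref{lem:locerrEll}. This produces $E_H^\ell(t) \le C\,\ell^{d/2}\exp(-c_\mathrm{dec}\ell)\big(\|\tu_H\|_{\infty,1} + T\|\partial_t \tu_H\|_{\infty,1}\big)\cdot \sup_{t'} (E_H^\ell(t'))^{1/2}$; taking the supremum over $t$, dividing by $\sup (E_H^\ell)^{1/2}$, and invoking the norm equivalence once more yields the claimed estimate.

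The main obstacle I anticipate is the first step: correctly deriving the defect equation and identifying which argument of the localization operator the decay estimate Lemma~\ref{lem:locerrEll} should be applied to. Because the mass bilinear form $b_H'$ sees only $\Pi_H$ of the solutions — and $\Pi_H \cS_H^\ell w = w$ for $w \in U_H$ while $\Pi_H \tu_H \ne \Pi_H \tu_H^\ell$ in general — one must be careful not to generate spurious mass-term contributions; the trick used in Lemma~\ref{lem:errsemidisc} (that~\eqref{eq:MLLOD} for $\ell=\infty$ holds for all $H_0^1$ test functions) handles the non-localized side, but the localized side genuinely lives in $\tU_H^\ell$ only, so the subtraction has to be organized so that the test function $\partial_t e_H^\ell$ is legitimately usable and the residual is purely of the form $a((\cS_H - \cS_H^\ell)(\cdot), \cdot)$. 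A secondary, more routine point is the time-integration-by-parts bookkeeping that converts $\|\partial_t(\cS_H - \cS_H^\ell)\tu_H\|$-type terms into $\|\partial_t \tu_H\|_{\infty,1}$ via the $t$-independence of the correction operators and Lemma~\ref{lem:locerrEll} applied to $\partial_t \tu_H$; everything else is a direct transcription of the semi-discrete energy argument.
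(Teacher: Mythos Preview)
Your overall strategy is right, but there is a real gap precisely at the point you yourself flag as the main obstacle. You correctly note that the localized equation \eqref{eq:MLLOD} only holds for test functions in $\tU_H^\ell$, and that $\partial_t e_H^\ell = \partial_t(\tu_H - \tu_H^\ell)$ lies in neither $\tU_H$ nor $\tU_H^\ell$. Your proposed workaround---passing to the $U_H$-formulation \eqref{eq:MLLOD2} tested against $w_H \in U_H$---is exactly the right move, but you then abandon it when you define the energy: your $E_H^\ell$ uses the stiffness term $a(e_H^\ell, e_H^\ell)$, so to get $\partial_t E_H^\ell$ you must test with $w = \partial_t e_H^\ell$, which brings you right back to the illegitimate test function. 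The defect equation you wrote down, with $a(e_H^\ell,w)$ on the left and a residual of the form $a((\cS_H-\cS_H^\ell)g,w)$ on the right, is not what actually comes out of subtracting the two formulations.

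The paper runs the \emph{entire} energy argument at the $U_H$ level. Setting $\be_H := u_H - u_H^\ell \in U_H$ (with $u_H=\Pi_H\tu_H$, $u_H^\ell=\Pi_H\tu_H^\ell$) and subtracting the two instances of \eqref{eq:MLLOD2} for $w_H \in U_H$ gives
\[
b_H'(\partial_t^2 \be_H, w_H) + a(\cS_H^\ell \be_H, \cS_H^\ell w_H) = a\big((\cS_H^\ell - \cS_H)u_H,\, \cS_H^\ell w_H\big) = a\big((\cS_H^\ell - \cS_H)u_H,\, (\cS_H^\ell - \cS_H) w_H\big),
\]
the last step by the orthogonality \eqref{eq:ortho}. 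The natural energy is therefore $\bE_H^\ell(t) = \tfrac12\|\partial_t \be_H\|_{b_H'}^2 + \tfrac12\|\cS_H^\ell \be_H\|_a^2$, and testing with $w_H = \partial_t \be_H \in U_H$ is now legitimate. Your integration-by-parts-in-time step then goes through, with Lemma~\ref{lem:locerrEll} applied to \emph{both} factors of the residual (yielding a squared decay rate for $\|\cS_H^\ell\be_H\|_{\infty,1}$). Only at the very end does one return to $\tu_H - \tu_H^\ell$ via the splitting $\tu_H - \tu_H^\ell = (\cS_H - \cS_H^\ell)u_H + \cS_H^\ell \be_H$ and a triangle inequality; the first piece, bounded directly by Lemma~\ref{lem:locerrEll}, contributes the dominant $\ell^{d/2}\exp(-c_{\mathrm{dec}}\ell)$ factor in the stated estimate. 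In short: do not try to build the energy from $e_H^\ell$ itself---build it from $\be_H$ and $\cS_H^\ell\be_H$, and recover $e_H^\ell$ afterwards.
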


\begin{proof}
Let $u_H = \Pi_H \tu_H$, $u_H^\ell = \Pi_H \tu_H^\ell$, and $\be_H(\cdot,t) = u_H(\cdot,t) - u_H^\ell(\cdot,t)$. From \eqref{eq:MLLOD2}, it follows that
\begin{equation}\label{eq:locErr}
\begin{aligned}
b_H'(\partial_t^2\be_H(\cdot,t), w_H) + a(\cS_H^\ell \be_H(\cdot,t), \cS_H^\ell w_H) &= a((\cS_H^\ell-\cS_H)u_H(\cdot,t),\cS_H^\ell w_H)\\
& = a((\cS_H^\ell-\cS_H)u_H(\cdot,t),(\cS_H^\ell-\cS_H)w_H)
\end{aligned}
\end{equation}
for all $w_H \in U_H$ and $ t \in (0,T)$, where the second line follows from the orthogonality property~\eqref{eq:ortho}.
Similarly to~\eqref{eq:energy}, we define an adapted energy by
\begin{align*}\label{eq:energyAd}
\bE_H^\ell(t) &:= \frac12 b_H'\big( \partial_t \be_H(\cdot,t), \partial_t\be_H(\cdot,t) \big) + \frac12 a\big( \cS_H^\ell\be_H(\cdot,t), \cS_H^\ell \be_H(\cdot,t) \big).
\end{align*}
From Lemma~\ref{lem:a} and Lemma~\ref{lem:b}, we have that
\begin{equation}\label{eq:energyEquiv2}
\begin{aligned}
(\bE_H^{\ell})^{1/2}(t) &\leq C (\|\partial_t\be_H(\cdot,t)\|_0 + \|\cS_H^\ell \be_H(\cdot,t)\|_1), \\
(\bE_H^{\ell})^{1/2}(t) &\geq C^{-1} (\|\partial_t\be_H(\cdot,t)\|_0 + \|\cS_H^\ell \be_H(\cdot,t)\|_1),
\end{aligned}
\end{equation}
for all $t\in(0,T)$.
By substituting $w_H = \partial_t \be_H(\cdot,t)$ into~\eqref{eq:locErr}, we obtain
\begin{equation*}
\partial_t \bE^\ell_H(t) = a((\cS_H^\ell-\cS_H)u_H(\cdot,t),(\cS_H^\ell-\cS_H)\partial_t\be_H(\cdot,t)).
\end{equation*}
Integration from $0$ to $t$ and integration by parts on the right-hand side lead to
\begin{align*}
\bE_H^\ell(t) &= a((\cS_H-\cS_H^\ell)u_H(\cdot,t),(\cS_H-\cS_H^\ell)\be_H(\cdot,t))\\
&\quad - \int_0^t a((\cS_H-\cS_H^\ell)\partial_tu_H(\cdot,t'),(\cS_H-\cS_H^\ell)\be_H(\cdot,t')) \dt' \\
&\leq C \left( \| (\cS_H-\cS_H^\ell)u_H \|_{\infty,1} + T\|(\cS_H-\cS_H^\ell)\partial_tu_H\|_{\infty,1} \right) \|(\cS_H-\cS_H^\ell)\be_H\|_{\infty,1} \\
&\leq C\,\ell^{d}\exp(-c_\mathrm{dec}2\ell) \left( \| u_H \|_{\infty,1} + T\| \partial_tu_H\|_{\infty,1} \right) \|\be_H\|_{\infty,1},
\end{align*}
where the second to last line follows from Lemma~\ref{lem:a} and the Cauchy--Schwarz inequality and the last line from Lemma~\ref{lem:locerrEll} and the Poincar\'e inequality.
Note that, due to Lemma~\ref{lem:stab} and Lemma~\ref{lem:stab2}, we have
\begin{align*}
\|u_H\|_{\infty,1} &= \|\Pi_H\cS_Hu_H\|_{\infty,1} \leq C\|\cS_Hu_H\|_{\infty,1} = C\|\tu_H\|_{\infty,1} \\
\|\partial_tu_H\|_{\infty,1} &= \|\Pi_H\cS_H\partial_tu_H\|_{\infty,1} \leq C\|\cS_H\partial_tu_H\|_{\infty,1} = C\|\partial_t\tu_H\|_{\infty,1} \\
\|\be_H\|_{\infty,1} &= \|\Pi_H\cS_H^\ell\be_H\|_{\infty,1} \leq C\|\cS_H^\ell\be_H\|_{\infty,1}.
\end{align*}
We therefore get
\begin{align*}
\bE_H^\ell(t) &\leq C\,\ell^{d}\exp(-c_\mathrm{dec}2\ell) \left( \| \tu_H \|_{\infty,1} + T\| \partial_t\tu_H\|_{\infty,1} \right) \|\cS_H^\ell \be_H\|_{\infty,1} \\
&\leq C\,\ell^{d}\exp(-c_\mathrm{dec}2\ell) \left( \| \tu_H \|_{\infty,1} + T\| \partial_t\tu_H\|_{\infty,1} \right) \Big( \sup_{t'\in(0,T)} \bE_H^\ell(t')\Big)^{1/2}
\end{align*}
Taking the supremum over all $t\in(0,T)$ and dividing by $\left(\sup_{t\in(0,T)} \bE_H^\ell(t)\right)^{1/2}$ gives
\begin{align*}
\Big(\sup_{t\in(0,T)} \bE_H^\ell(t)\Big)^{1/2} &\leq C\,\ell^{d}\exp(-c_\mathrm{dec}2\ell) \left( \| \tu_H \|_{\infty,1} + T\| \partial_t\tu_H\|_{\infty,1} \right)
\end{align*}
From \eqref{eq:energyEquiv2}, it then follows that
\begin{align}\label{eq:localErr1}
\|\partial_t\be_H\|_{\infty,0} + \|\cS_H^\ell \be_H \|_{\infty,1} &\leq C\,\ell^{d}\exp(-c_\mathrm{dec}2\ell) \left( \| \tu_H \|_{\infty,1} + T\| \partial_t\tu_H\|_{\infty,1} \right).
\end{align}
Note that $\Pi_H(\partial_t \tu_H-\partial_t\tu_H^\ell) = \partial_t\be_H$ and $\tu_H - \tu_H^\ell = (\cS_H - \cS^\ell_{H}) u_{H} + \cS_H^\ell\be_H$. From the triangle inequality, \eqref{eq:localErr1}, Lemma \ref{lem:locerrEll}, and the Poincar\'e inequality, we therefore get
\begin{align*}
\|\Pi_H(\partial_t \tu_H-&\partial_t\tu_H^\ell)\|_{\infty,0} + \|\tu_H - \tu_H^\ell\|_{\infty,1} \\
&\leq \|\partial_t\be_H\|_{\infty,0} + \|\cS_H^\ell\be_H\|_{\infty,1} + \|(\cS_H - \cS^\ell_{H}) u_{H}\|_{\infty,1}\\
& \leq C\,\ell^{d}\exp(-c_\mathrm{dec}2\ell) \left( \| \tu_H \|_{\infty,1} + T\| \partial_t\tu_H\|_{\infty,1} \right) + C\,\ell^{d/2}\exp(-c_\mathrm{dec}\ell)\|\tu_H\|_{\infty,1}\\
&\leq C\,\ell^{d/2}\exp(-c_\mathrm{dec}\ell) \left( \| \tu_H \|_{\infty,1} + T\| \partial_t\tu_H\|_{\infty,1} \right).
\end{align*}
Here, we employed that in the second to last equation the factor in the second term is dominant compared to the factor in the first term. This concludes the proof.
\end{proof}

The following result now immediately follows from Lemma~\ref{lem:locerr} and Lemma~\ref{lem:regEstH}.

\begin{cor}
\label{cor:locerr}
Let $\tu_H$ and $\tU_H^\ell$ be defined as in Lemma \ref{lem:locerr}. If $f$ satisfies \eqref{eq:regcond} for $k=1$, then 
\begin{align*}
\|\Pi_H\partial_t(\tu_H - \tu_H^\ell)\|_{\infty,0} + \|\tu_H - \tu_H^\ell\|_{\infty,1} &\leq C\,\ell^{d/2}\exp(-c_\mathrm{dec}\ell) \big( T\| f\|_{\infty,0} + T^2 \| \partial_t f\|_{\infty,0} \big).
\end{align*}
\end{cor}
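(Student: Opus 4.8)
The plan is to chain together the two error estimates already established in this subsection and the previous one, namely the localization estimate of Lemma~\ref{lem:locerr} and the regularity estimate of Lemma~\ref{lem:regEstH}. The statement of Corollary~\ref{cor:locerr} is precisely the composition of these two results, so the proof is essentially a one-line substitution followed by bookkeeping of the right-hand side.

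First I would recall that Lemma~\ref{lem:locerr} gives
\begin{align*}
\|\Pi_H\partial_t(\tu_H - \tu_H^\ell)\|_{\infty,0} + \|\tu_H - \tu_H^\ell\|_{\infty,1} &\leq C\,\ell^{d/2}\exp(-c_\mathrm{dec}\ell)\big(\| \tu_H\|_{\infty,1}+T\,\| \partial_t\tu_H\|_{\infty,1}\big),
\end{align*}
under the hypothesis that $\partial_t^k \tu_H \in L^\infty(0,T;H^1_0(\Omega))$ for $k \leq 1$. The task is then to control $\|\tu_H\|_{\infty,1}$ and $\|\partial_t\tu_H\|_{\infty,1}$ in terms of the data $f$. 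Since $f$ satisfies the regularity condition~\eqref{eq:regcond} with $k=1$, Lemma~\ref{lem:regEstH} applies with $\ell = \infty$ and yields in particular $\partial_t^k \tu_H \in L^\infty(0,T;H^1_0(\Omega))$ for $k \leq 1$ (so the hypothesis of Lemma~\ref{lem:locerr} is met) together with the quantitative bounds
\begin{align*}
\|\tu_H\|_{\infty,1} &\leq C T \|f\|_{\infty,0}, \qquad \|\partial_t\tu_H\|_{\infty,1} \leq C T \|\partial_t f\|_{\infty,0},
\end{align*}
obtained from the Lemma~\ref{lem:regEstH} estimate with $i=0$ and $i=1$ respectively.

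Substituting these two bounds into the right-hand side of the Lemma~\ref{lem:locerr} estimate gives
\begin{align*}
\|\Pi_H\partial_t(\tu_H - \tu_H^\ell)\|_{\infty,0} + \|\tu_H - \tu_H^\ell\|_{\infty,1} &\leq C\,\ell^{d/2}\exp(-c_\mathrm{dec}\ell)\big(T\|f\|_{\infty,0} + T^2\|\partial_t f\|_{\infty,0}\big),
\end{align*}
which is exactly the claimed inequality. There is no genuine obstacle here: the only minor points to verify are that the hypothesis $k=1$ in~\eqref{eq:regcond} is the right amount of regularity for both Lemma~\ref{lem:regEstH} (directly) and Lemma~\ref{lem:locerr} (via the conclusion $\partial_t^k \tu_H \in L^\infty(0,T;H^1_0(\Omega))$ for $k\leq 1$), and that the factor of $T$ from each regularity bound combines with the existing powers of $T$ to produce the stated $T\|f\|_{\infty,0} + T^2\|\partial_t f\|_{\infty,0}$. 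Thus the corollary follows immediately.
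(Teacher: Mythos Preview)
Your proposal is correct and matches the paper's approach exactly: the paper states that the corollary follows immediately from Lemma~\ref{lem:locerr} and Lemma~\ref{lem:regEstH}, and you have spelled out precisely that substitution. The verification that the $k=1$ regularity assumption on $f$ both triggers Lemma~\ref{lem:regEstH} (yielding the bounds on $\|\tu_H\|_{\infty,1}$ and $\|\partial_t\tu_H\|_{\infty,1}$) and supplies the hypothesis of Lemma~\ref{lem:locerr} is correct.
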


\subsection{Temporal error}
\label{ss:temperr}

As a final step in the error analysis, we estimate the error between the fully discrete approximation in~\eqref{eq:MLLODfull} and the semi-discrete approximation in~\eqref{eq:MLLOD2}. 
\begin{lem}[Temporal error]\label{lem:tempErr}
Let $\tu_H^{\ell}$ and $\tu_H^{\ell,n}$ denote the solutions of~\eqref{eq:MLLOD} and~\eqref{eq:MLLODfull}, respectively, and assume that the
CFL condition~\eqref{eq:CFL} holds.  
Also assume that $\partial_t^k\tu_H^\ell \in L^{\infty}(0,T;H^1_0(\Omega))$ for $k\leq 3$, $\partial_t^4\tu_H^\ell \in L^{\infty}(0,T;L^2(\Omega))$, and $\partial_t^k\tu_H^\ell |_{t=0} \equiv 0$ for $k\leq 3$. Then
\begin{equation*}\label{eq:timeerror1}
\begin{aligned}
\max_{1\leq n\leq N_t} \|\Pi_HD_{\Delta t}(\tu^\ell_H(\cdot,t^n) - \tu_H^{\ell,n})\|_0 + & \max_{0\leq n\leq N_t} \| \tu^\ell_H(\cdot,t^n) - \tu_H^{\ell,n} \|_1 \leq \\
& \qquad C \Delta t^2 \left( T \|\Pi_H\partial_t^4\tu_H\|_{\infty,0} + T \| \partial_t^3\tu_H\|_{\infty,1} \right),
\end{aligned}
\end{equation*}
where we use the notation $D_{\Delta t}v(\cdot,t^n) = \Delta^{-1}(v(\cdot,t^{n})-v(\cdot,t^{n-1}))$ and $D_{\Delta t}v^n = \Delta t^{-1}(v^{n}-v^{n-1})$.
\end{lem}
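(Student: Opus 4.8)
The plan is to follow the standard approach for the convergence analysis of a leapfrog scheme applied to a linear second-order ODE system: write the method for $\tu_H^\ell$ as a system in the coefficient space (via the reformulation~\eqref{eq:MLLODfull2}), introduce the consistency error obtained by plugging the exact semi-discrete solution $\tu_H^\ell(\cdot,t^n)$ into the discrete scheme, and then close a discrete energy estimate for the difference $\theta^n := \tu_H^\ell(\cdot,t^n) - \tu_H^{\ell,n}$. Concretely, I would first note that both $\tu_H^\ell$ and $\tu_H^{\ell,n}$ live in $\tU_H^\ell$, and that the natural variable to track is $u_H^\ell := \Pi_H\tu_H^\ell$ in $U_H$, since the mass term $b_H'$ is coercive only on $U_{H,1}^D\supset U_H$ (Lemma~\ref{lem:bH}), whereas on $\tU_H^\ell$ the bilinear form $b_H'(\Pi_H\cdot,\Pi_H\cdot)$ is only a semi-inner product. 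So the right object is $\beta^n := u_H^\ell(\cdot,t^n) - u_H^{\ell,n} \in U_H$, with $\theta^n = \cS_H^\ell\beta^n$.

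The key steps, in order. \emph{(i) Consistency.} Using $\partial_t^2 \tu_H^\ell(\cdot,t^n) - D_{\Delta t}^2 \tu_H^\ell(\cdot,t^n) = \tau^n$ where $D_{\Delta t}^2 v(\cdot,t^n) := \Delta t^{-2}(v(\cdot,t^{n+1}) - 2v(\cdot,t^n) + v(\cdot,t^{n-1}))$ and $\|\tau^n\|_0 \leq C\Delta t^2 \|\partial_t^4\tu_H^\ell(\cdot,\cdot)\|_{\infty,0}$ on the relevant subinterval (Taylor expansion with integral remainder, using the assumed $\partial_t^4\tu_H^\ell \in L^\infty(0,T;L^2(\Omega))$), subtract~\eqref{eq:MLLODfull2} from the semi-discrete equation~\eqref{eq:MLLOD2} evaluated at $t^n$ to get, for all $w \in U_H$,
\begin{equation*}
b_H'(D_{\Delta t}^2 \beta^n, w) + a(\cS_H^\ell\beta^n, \cS_H^\ell w) = b_H'(\Pi_H\tau^n, w).
\end{equation*}
\emph{(ii) Discrete energy.} Test with $w = D_{\Delta t}\beta^{n+1} + D_{\Delta t}\beta^n = \Delta t^{-1}(\beta^{n+1} - \beta^{n-1})$ (the discrete analogue of $\partial_t\beta$), which is the canonical test function that turns the leapfrog stencil into a telescoping difference. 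Define the discrete energy
\begin{equation*}
\mathcal{E}^n := \tfrac12\|D_{\Delta t}\beta^{n}\|_{b_H'}^2 + \tfrac12 a(\cS_H^\ell\beta^{n}, \cS_H^\ell\beta^{n-1}),
\end{equation*}
or rather the symmetrized variant $\tfrac12\|D_{\Delta t}\beta^n\|_{b_H'}^2 + \tfrac14(a(\cS_H^\ell\beta^n,\cS_H^\ell\beta^n) + a(\cS_H^\ell\beta^{n-1},\cS_H^\ell\beta^{n-1})) - \tfrac{\Delta t^2}{4}\|\cS_H^\ell D_{\Delta t}\beta^n\|_a^2$; the point is that $\mathcal{E}^{n+1} - \mathcal{E}^n$ equals $\Delta t\, b_H'(\Pi_H\tau^n, D_{\Delta t}\beta^{n+1} + D_{\Delta t}\beta^n)$, which can be summed over $n$. \emph{(iii) Positivity / CFL.} Show $\mathcal{E}^n$ is equivalent (with constants independent of $H,\Delta t$) to $\|D_{\Delta t}\beta^n\|_0^2 + \|\cS_H^\ell\beta^n\|_1^2$; here is exactly where the CFL condition~\eqref{eq:CFL} enters, through the inverse inequality~\eqref{eq:invIneq} combined with Lemma~\ref{lem:locerrEll} (to bound $\|\cS_H^\ell w\|_a^2 \leq (1 + C_\mathrm{loc}^2\ell^d\exp(-c_\mathrm{dec}2\ell))\alpha_{\max}C_\mathrm{inv}^2 H^{-2}\|w\|_0^2$ on $U_H$) and Lemma~\ref{lem:bH}, so that the negative term $-\tfrac{\Delta t^2}{4}\|\cS_H^\ell D_{\Delta t}\beta^n\|_a^2$ is dominated with a margin $\delta$ by $\tfrac12\|D_{\Delta t}\beta^n\|_{b_H'}^2$. \emph{(iv) Summation and discrete Gronwall.} Sum the energy increments, bound the right-hand side via Cauchy--Schwarz as $\Delta t\sum_n \|\Pi_H\tau^n\|_0 \cdot (\mathcal{E}^n)^{1/2}$ using $\|\Pi_H\tau^n\|_0 \leq C\|\tau^n\|_0$ (Lemma~\ref{lem:stab}) and stability of $b_H'$, and — since $\beta^0 = \beta^1 = 0$ so $\mathcal{E}^1 = 0$ — apply a discrete Gronwall (or, since the problem is linear and the summed data term is just $\leq CT\Delta t^2\|\partial_t^4\tu_H\|_{\infty,0}$ times $\sup_m(\mathcal{E}^m)^{1/2}$, simply divide through by $\sup_m(\mathcal{E}^m)^{1/2}$). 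Finally convert back: $\|D_{\Delta t}\beta^n\|_0$ controls $\|\Pi_H D_{\Delta t}\theta^n\|_0$ directly, and for the $H^1$ part use $\|\theta^n\|_1 = \|\cS_H^\ell\beta^n\|_1$ together with the identity $\|\Pi_H\partial_t^4\tu_H\|_{\infty,0} \leq C\|\Pi_H\partial_t^4\tu_H\|_{\infty,0}$ and $\|\partial_t^4\tu_H^\ell\|$-type quantities rewritten in terms of $\tu_H$ via the stability bounds (Lemma~\ref{lem:stab}, Lemma~\ref{lem:stab2}) as in the localization-error proof, to land the right-hand side in the stated form $C\Delta t^2(T\|\Pi_H\partial_t^4\tu_H\|_{\infty,0} + T\|\partial_t^3\tu_H\|_{\infty,1})$; the $\partial_t^3$ term appears because controlling $a(\cS_H^\ell\beta^n,\cS_H^\ell\beta^{n-1})$ against the energy, or alternatively handling the cross-term in the symmetrized energy, produces a contribution governed by the third time derivative when one Taylor-expands carefully (equivalently, one may test with a slightly different discrete velocity and the $\partial_t^3$ term enters the consistency error of the $H^1$-component).

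I expect the main obstacle to be step (iii), the coercivity of the discrete energy under the CFL condition, done carefully enough that the constant is explicitly the one in~\eqref{eq:CFL} with the margin $\delta$ — in particular correctly tracking the factor $(1 + C_\mathrm{loc}^2\ell^d\exp(-c_\mathrm{dec}2\ell))$ coming from replacing the inverse inequality bound for $\|\cS_H^\ell w_H\|_a$ (which, unlike $\|w_H\|_a$ for $w_H \in U_H$, is not directly an $H^{-1}$-scaled $L^2$ bound) by $\|\cS_H w_H\|_a + \|(\cS_H^\ell - \cS_H)w_H\|_a$ and invoking Lemma~\ref{lem:locerrEll} on the second piece, while noting $\|\cS_H w_H\|_a \leq \|w_H\|_a$ by the orthogonality~\eqref{eq:ortho}. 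A secondary subtlety is that $b_H'(\Pi_H\cdot,\Pi_H\cdot)$ degenerates on $\tU_H^\ell$, so all energy manipulations must be carried out with the genuine norm $\|\cdot\|_{b_H'}$ on the $U_H$-variables $\beta^n$ and only translated back to $\theta^n = \cS_H^\ell\beta^n$ at the very end via $\Pi_H\cS_H^\ell = \mathrm{id}$ on $U_H$.
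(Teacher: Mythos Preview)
Your overall architecture matches the paper's proof closely: derive the error equation, test with the discrete time-derivative, introduce the leapfrog energy, verify positivity under the CFL condition via the splitting $\|\cS_H^\ell w_H\|_a^2 = \|\cS_H w_H\|_a^2 + \|(\cS_H^\ell-\cS_H)w_H\|_a^2$ together with $\|\cS_H w_H\|_a \leq \|w_H\|_a$ from~\eqref{eq:ortho} and Lemma~\ref{lem:locerrEll}, then sum and divide through by $\sup_m(\mathcal{E}^m)^{1/2}$. That part is correct and essentially identical to the paper.

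There is, however, a genuine gap in step~(iv). You claim $\beta^0=\beta^1=0$, but this is false: while $u_H^{\ell,0}=u_H^{\ell,1}=0$ by definition of the scheme and $u_H^\ell(\cdot,0)=0$ by the initial condition, the semi-discrete solution at time $t^1$ is \emph{not} zero, so $\beta^1 = \Pi_H\tu_H^\ell(\cdot,t^1) \neq 0$ and $\mathcal{E}^1 \neq 0$. This nonzero initial energy must be bounded separately, and \emph{this} is precisely where the $\partial_t^3$ term enters the final estimate. Using the hypothesis $\partial_t^k\tu_H^\ell|_{t=0}\equiv 0$ for $k\leq 3$, a Taylor expansion around $t=0$ gives
\[
\|\Pi_H D_{\Delta t}\beta^1\|_0 = \Delta t^{-1}\|\Pi_H\tu_H^\ell(\cdot,t^1)\|_0 \leq C\Delta t^3\|\Pi_H\partial_t^4\tu_H^\ell\|_{\infty,0},
\qquad
\|\cS_H^\ell\beta^1\|_1 = \|\tu_H^\ell(\cdot,t^1)\|_1 \leq C\Delta t^3\|\partial_t^3\tu_H^\ell\|_{\infty,1}.
\]
Your explanation that the $\partial_t^3$ term arises from ``controlling $a(\cS_H^\ell\beta^n,\cS_H^\ell\beta^{n-1})$ against the energy'' or from a cross-term in the symmetrized energy is incorrect; those terms are handled entirely by the CFL-based coercivity and produce no additional consistency contribution. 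Once you fix the treatment of $\mathcal{E}^1$, the rest of your argument goes through and coincides with the paper's.
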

\begin{proof}
Let $e_H^n := \tu^\ell_H(\cdot,t^n) - \tu_H^{\ell,n}$ and observe that $e_H^n$ solves
\begin{equation}\label{eq:proof_temp_1}
\begin{aligned}
b_H'(\Delta t^{-2}&\Pi_H(e_H^{n+1}-2e_H^n+e_H^{n-1}),\Pi_H w_H) + a(e_H^n,w_H) \\&= 
b_H'(-\Pi_H\partial_t^2\tu_H^\ell(\cdot,t^n) + \Delta t^{-2}\Pi_H (\tu_H^\ell(\cdot,t^{n+1}) - 2\tu_H^\ell(\cdot,t^{n}) + \tu_H^\ell(\cdot,t^{n-1}) ),\Pi_H w_H)\\& =: b_H'(F^n_{\tu_H^\ell},\Pi_H w_H)
\end{aligned}
\end{equation}
for all $w_H \in \tU_H^\ell$. 
We now follow the ideas of~\cite{Chr09,Jol03}. Choosing the test function $w_H = e_H^{n+1}-e_H^{n-1}$ in~\eqref{eq:proof_temp_1}, we obtain
\begin{align}
\label{eq:eHn}
E_H^{\ell,n+1} - E_H^{\ell,n} &= \frac12 \Delta t\, b_H'(F^{n}_{\tu_H^\ell}, \Pi_H D_{\Delta t}e_H^{n+1} +  \Pi_H D_{\Delta t}e_H^{n})
\end{align}
for all $1\leq n\leq N_t-1$, where the discrete energy $E_H^n$ is given by
\begin{equation*}
E_H^{\ell,n} := \frac12 b_H'\big(\Pi_H D_{\Delta t}e_H^n, \Pi_H D_{\Delta t}e_H^n \big) + \frac12 a\big( e_H^{n-1}, e_H^n \big).
\end{equation*}
Next, we have to show that this energy is always positive.

First, note that, for any $v_H\in U_H$,
\begin{align*}
a(v_H,v_H) &= a((v_H -\cS_Hv_H)+\cS_Hv_H, (v_H -\cS_Hv_H)+\cS_Hv_H)  \\
&= a(v_H -\cS_Hv_H, v_H -\cS_Hv_H) + a(\cS_Hv_H,\cS_Hv_H) \\
&\geq a(\cS_Hv_H,\cS_Hv_H),
\end{align*}
where the second step follows from the orthogonality property~\eqref{eq:ortho}. We can then derive, for any $v_H\in U_H$,
\begin{align*}
a(\cS_{H}^\ell v_H, \cS_H^\ell v_H) &= a((\cS_{H}^\ell - \cS_H)v_H + \cS_Hv_H, (\cS_{H}^\ell - \cS_H)v_H + \cS_Hv_H) \\
&= a((\cS_{H}^\ell-\cS_H) v_H, (\cS_{H}^\ell-\cS_H) v_H) + a(\cS_H v_H, \cS_H v_H) \\
&\leq a((\cS_{H}^\ell-\cS_H) v_H, (\cS_{H}^\ell-\cS_H) v_H) + a(v_H, v_H) \\
&\leq \alpha_{\max} ( |(\cS_{H}^\ell-\cS_H) v_H|_1^2 + |v_H|_1^2) \\
&\leq \alpha_{\max} ( C_\mathrm{loc}^2 \, \ell^{d}\exp(-c_\mathrm{dec}2\ell) + 1 )|v_H|_1^2,
\end{align*}
where the second line follows from the orthogonality property~\eqref{eq:ortho} and the last line follows from the localization result in Lemma~\ref{lem:locerrEll}.
Therefore, using the inverse inequality (see, e.g., \cite{Sch98,GraHS05,Geo08})
\begin{equation}\label{eq:invIneq}
|v_H|_1 \leq C_\mathrm{inv}H^{-1}\|v_H\|_0
\end{equation}
and Lemma~\ref{lem:b}, we obtain
\begin{align*}
a(\cS_{H}^\ell v_H, \cS_H^\ell v_H) \leq C_\ell\, \alpha_{\max} C^2_\mathrm{inv}H^{-2} \|v_H\|_0^2 \leq C_\ell\, \frac{\alpha_{\max}}{\beta_{\min}} C_b^2 C^2_\mathrm{inv}H^{-2} b_H'(v_H,v_H)
\end{align*}
for all $v_H\in U_H$, where we abbreviate $C_\ell = (C_\mathrm{loc}^2\,\ell^{d}\exp(-c_\mathrm{dec}2\ell) + 1)$. This immediately implies that
\begin{align*}\label{eq:proof_temp_2}
a(\tilde v_H, \tilde v_H) \leq C_\ell\, \frac{\alpha_{\max}}{\beta_{\min}} C_b^2 C^2_\mathrm{inv}H^{-2} b_H'(\Pi_H\tilde v_H,\Pi_H\tilde v_H) \qquad\text{for all }\tilde v_H\in\tU_H^\ell.
\end{align*}
Using this estimate and the CFL condition~\eqref{eq:CFL}, we compute
\begin{equation*}\label{eq:proof_temp_3}
\begin{aligned}
E_H^{\ell,n} &= \frac12 \|\Pi_H D_{\Delta t}e_H^n\|_{b_H'}^{2} - \frac{1}{4}\,a( e_H^n - e_H^{n-1}, e_H^n - e_H^{n-1}) + \frac{1}{4}\,a( e_H^n, e_H^n) + \frac{1}{4}\,a( e_H^{n-1}, e_H^{n-1}) \\
&= \frac12 \|\Pi_H D_{\Delta t}e_H^n\|_{b_H'}^{2} - \frac{1}{4}\Delta t^2 \,a( D_{\Delta t}e_H^n , D_{\Delta t}e_H^n) + \frac{1}{4}\,a( e_H^n, e_H^n) + \frac{1}{4}\,a( e_H^{n-1}, e_H^{n-1}) \\
& \geq \frac12 \left( 1 - \frac12 C_\ell\,\frac{\alpha_{\max}}{\beta_{\min}} C_b^2 C^2_\mathrm{inv}H^{-2}\Delta t^2 \right) \|\Pi_H D_{\Delta t}e_H^n\|_{b_H'}^2  + \frac{1}{4} \|e_H^n\|_a^2 + \frac{1}{4}\|e_H^{n-1}\|_a^2 \\
& \geq \frac{\delta}{2}\, \|\Pi_H D_{\Delta t}e_H^n\|_{b_H'}^2 + \frac{1}{4} \|e_H^n\|_a^2 + \frac{1}{4}\|e_H^{n-1}\|_a^2 \geq 0
\end{aligned}
\end{equation*}
for all $1\leq n \leq N_t$. From this, Lemma \ref{lem:a}, and Lemma \ref{lem:b}, it follows that
\begin{subequations}
\label{eq:equivEhn}
\begin{align}
\big( E_h^{\ell,n}\big)^{1/2} &\leq C(\| \Pi_H D_{\Delta t} e_H^n\|_0 + \| e_H^n \|_1 + \| e_H^{n-1} \|_1), \\
\big( E_h^{\ell,n}\big)^{1/2} &\geq C^{-1}(\| \Pi_H D_{\Delta t} e_H^n\|_0 + \| e_H^n \|_1 + \|e_H^{n-1}\|_1),
\end{align}
\end{subequations}
for all $1\leq n \leq N_t$.
Using \eqref{eq:eHn}, the Cauchy--Schwarz inequality, the triangle inequality, Lemma~\ref{lem:b}, and~\eqref{eq:equivEhn}, we now obtain
\begin{align*}
E_H^{\ell,n+1} - E_H^{\ell,n} &\leq \frac12 \Delta t\, \|F^n_{\tu_H^\ell}\|_{b_H'} \,\big(\|\Pi_H D_{\Delta t}e_H^{n+1}\|_{b_H'} +  \|\Pi_H D_{\Delta t}e_H^{n}\|_{b_H'}\big)\\
&\leq C \Delta t\, \|F^n_{\tu_H^\ell}\|_{0} \big(\big(E_H^{\ell,n+1}\big)^{1/2} + \big(E_H^{\ell,n}\big)^{1/2}\big) \\
&\leq C \Delta t \left( \max_{1\leq n'\leq N_t-1} \|F^{n'}_{\tu_H^\ell}\|_{0} \right) \left( \max_{1\leq n'\leq N_t} \big( E_H^{\ell,n'}\big)^{1/2} \right)
\end{align*}
for all $1\leq n \leq N_t-1$.
Using this recursive relation and \eqref{eq:equivEhn}, we can obtain
\begin{align*}
E_H^{\ell,n+1} &\leq E_h^{\ell,1} + C n \Delta t \left( \max_{1\leq n'\leq N_t-1} \|F^{n'}_{\tu_H^\ell}\|_{0} \right) \left( \max_{1\leq n'\leq N_t} \big( E_H^{\ell,n'}\big)^{1/2} \right) \\
&\leq \left( \big(E_h^{\ell,1}\big)^{1/2} + C T \left( \max_{1\leq n'\leq N_t-1} \|F^{n'}_{\tu_H^\ell}\|_{0} \right) \right) \left( \max_{1\leq n'\leq N_t} \big( E_H^{\ell,n'}\big)^{1/2} \right)\\
&\leq C \left( \|\Pi_H D_{\Delta t} e_H^1\|_0 + \|e_H^1\|_1 + C T \left( \max_{1\leq n' \leq N_t-1} \|F^{n'}_{\tu_H^\ell}\|_0 \right) \right) \left( \max_{1\leq n'\leq N_t} \big( E_H^{\ell,n'}\big)^{1/2} \right)
\end{align*} 
for all $0\leq n\leq N_t-1$. Taking the maximum over all $0\leq n\leq N_t-1$ and dividing by $\max_{1\leq n\leq N_t} \big( E_H^{\ell,n}\big)^{1/2}$ then gives 
\begin{equation*}
\max_{1\leq n\leq N_t} \big( E_H^{\ell,n}\big)^{1/2}  \leq C \left( \|\Pi_H D_{\Delta t} e_H^1\|_0 + \|e_H^1\|_1 + T \left( \max_{1\leq n \leq N_t-1} \|F^{n}_{\tu_H^\ell}\|_0 \right) \right).
\end{equation*}
From \eqref{eq:equivEhn}, it then follows that 
\begin{equation}\label{eq:timeerror2}
\begin{aligned}
\max_{1\leq n\leq N_t} \| \Pi_H D_{\Delta t} e_H^n\|_0 + & \max_{0\leq n\leq N_t} \| e_H^n \|_1  \\&\leq 
C \left( \| \Pi_H D_{\Delta t} e_H^1\|_0 + \|e_H^1\|_1 + T \left( \max_{1\leq n \leq N_t-1} \|F^{n}_{\tu_H^\ell}\|_0 \right) \right).
\end{aligned}
\end{equation}
It remains to estimate $\|e_H^1\|_1$ and $\|F^n_{\tu_H^\ell}\|_{0}$. Using the assumption that $\partial_t^k\tu_H^{\ell}|_{t=0}\equiv 0$ for $k\leq 3$ as well as a third-order and a second-order Taylor expansion around $t=0$, we obtain 
\begin{align*}
\| \Pi_H D_{\Delta t}e_H^1\|_0 &= \frac{1}{\Delta t}\| \Pi_H \tu_H^{\ell}(\cdot,t^1) \|_0 \leq C\Delta t^3 \|\Pi_H\partial_t^4 \tu_H^{\ell}\|_{\infty,0}, \\
\| e_H^1\|_1 &= \|\tu_H^\ell(\cdot,t^1)\|_1 \leq C\Delta t^3 \| \partial_t^3\tu_H^{\ell}\|_{\infty,1}.
\end{align*}
Using a third-order Taylor expansion around $t=t^n$, we also get, for all $1\leq n \leq N_t-1$, 
\begin{align*}
\|F^{n}_{\tu_H^\ell}\|_0 &= \|\Pi_H\partial_t^2\tu_H^\ell(\cdot,t^n)- \Delta t^{-2}\Pi_H(\tu_H^\ell(\cdot,t^{n+1}) - 2\tu_H^\ell(\cdot,t^{n}) + \tu_H^\ell(\cdot,t^{n-1}))\|_0 \\
&\leq C\,\Delta t^2\| \Pi_H \partial_t^4 \tu_H^\ell\|_{\infty,0}.
\end{align*}
Together with \eqref{eq:timeerror2}, these last three estimates prove the assertion.
\end{proof}

The following result now directly follows from Lemma~\ref{lem:tempErr} and Lemma~\ref{lem:regEstH}.

\begin{cor}
\label{cor:tempErr}
Let $\tu_H^\ell$ and $\tu_H^{\ell,n}$ be defined as in Lemma \ref{lem:tempErr}. If $f$ satisfies \eqref{eq:regcond} for $k=3$, then
\begin{align*}
\max_{1\leq n\leq N_t} \|\Pi_HD_{\Delta t}(\tu^\ell_H(\cdot,t^n) - \tu_H^{\ell,n})\|_0 + \max_{0\leq n\leq N_t} \| \tu^\ell_H(\cdot,t^n) - \tu_H^{\ell,n} \|_1 &\leq C \Delta t^2 T^2 \|\partial_t^3f\|_{\infty,0}.
\end{align*}
\end{cor}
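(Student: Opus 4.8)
The plan is to obtain this estimate by feeding the a priori regularity bound of Lemma~\ref{lem:regEstH} into the temporal error estimate of Lemma~\ref{lem:tempErr}. First I would check that every hypothesis of Lemma~\ref{lem:tempErr} is available under the present assumptions. Since $f$ satisfies~\eqref{eq:regcond} with $k=3$, Lemma~\ref{lem:regEstH} (applied with that $k$) delivers exactly the regularity required there, namely $\partial_t^k\tu_H^\ell\in L^\infty(0,T;H_0^1(\Omega))$ for $k\leq 3$, $\partial_t^4\tu_H^\ell\in L^\infty(0,T;L^2(\Omega))$, and the vanishing initial conditions $\partial_t^k\tu_H^\ell|_{t=0}\equiv 0$ for $k\leq 3$; the CFL condition~\eqref{eq:CFL} is understood to hold as part of the setting inherited from Lemma~\ref{lem:tempErr}.

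Next I would simply invoke Lemma~\ref{lem:tempErr}, which bounds the left-hand side of the claimed inequality by $C\,\Delta t^2\,T\big(\|\Pi_H\partial_t^4\tu_H^\ell\|_{\infty,0}+\|\partial_t^3\tu_H^\ell\|_{\infty,1}\big)$. The only remaining task is to control the two $\tu_H^\ell$-norms appearing here in terms of data, and this is precisely the second assertion of Lemma~\ref{lem:regEstH}: choosing $i=3$ there gives $\|\Pi_H\partial_t^4\tu_H^\ell\|_{\infty,0}+\|\partial_t^3\tu_H^\ell\|_{\infty,1}\leq CT\|\partial_t^3 f\|_{\infty,0}$. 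Substituting this bound and collecting the two factors of $T$ yields $C\,\Delta t^2\,T^2\,\|\partial_t^3 f\|_{\infty,0}$, which is the asserted estimate.

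I do not expect a genuine obstacle in this argument; it is a routine composition of two already established results. The only point requiring care is the bookkeeping: one must verify that~\eqref{eq:regcond} with $k=3$ is precisely the hypothesis strength needed to activate both lemmas at once (a weaker regularity assumption on $f$ would not suffice to control $\partial_t^4\tu_H^\ell$), and one should observe that the norms written on the right-hand side of Lemma~\ref{lem:tempErr} are themselves covered by Lemma~\ref{lem:regEstH}, which is stated for $\tu_H^\ell$ with $\ell\in\mathbb{N}\cup\{\infty\}$ and hence applies equally to the non-localized solution.
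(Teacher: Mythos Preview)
Your argument is correct and matches the paper's approach exactly: the paper states that the corollary ``directly follows from Lemma~\ref{lem:tempErr} and Lemma~\ref{lem:regEstH}'' without further elaboration, and your proposal spells out precisely this composition, verifying the hypotheses of Lemma~\ref{lem:tempErr} via Lemma~\ref{lem:regEstH} with $k=3$ and then bounding the resulting $\tu_H^\ell$-norms by applying the estimate of Lemma~\ref{lem:regEstH} with $i=3$.
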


\begin{figure}
	\centering
	\scalebox{0.92}{
\begin{tikzpicture}

\begin{axis}[
colorbar,
colorbar style={ylabel={}},
colormap/viridis,
hide x axis,
hide y axis,
point meta max=4,
point meta min=0.5,
tick align=outside,
tick pos=left,
x grid style={white!69.0196078431373!black},
xmin=-0.5, xmax=127.5,
xtick style={color=black},
y grid style={white!69.0196078431373!black},
ymin=-0.5, ymax=127.5,
ytick style={color=black}
]
\addplotgraphicsnatural
[includegraphics cmd=\pgfimage,xmin=-0.5, xmax=127.5, ymin=-0.5, ymax=127.5] {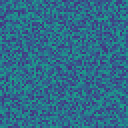};
\end{axis}

\end{tikzpicture}
	}
	\hspace{2em}
	\scalebox{0.92}{
\begin{tikzpicture}

\begin{axis}[
colorbar,
colorbar style={ylabel={}},
colormap/viridis,
hide x axis,
hide y axis,
point meta max=4,
point meta min=0.5,
tick align=outside,
tick pos=left,
x grid style={white!69.0196078431373!black},
xmin=-0.5, xmax=127.5,
xtick style={color=black},
y grid style={white!69.0196078431373!black},
ymin=-0.5, ymax=127.5,
ytick style={color=black}
]
\addplotgraphicsnatural
[includegraphics cmd=\pgfimage,xmin=-0.5, xmax=127.5, ymin=-0.5, ymax=127.5] {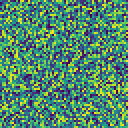};
\end{axis}

\end{tikzpicture}
	}
	\caption
	{\small Coefficients $\alpha$ (left) and $\beta$ (right) for Example~1.} 
	\label{fig:coeff1}
\end{figure}

\subsection{Proof of Theorem~\ref{thm:fullerr}}
\label{ss:proofthm}

\begin{proof}[Proof of Theorem~\ref{thm:fullerr}]
Let $\tu_H^\ell$ be the solution of~\eqref{eq:LOD} and $\tu_H = \tu_H^\infty$ the non-localized solution of~\eqref{eq:LOD}. 
Define the errors $e_H:= u-\tu_H$, $\be_H:=\tu_H-\tu_H^\ell$, and $e_H^n:=\tu_H^\ell(\cdot,t^n) - \tu_H^{\ell,n}$. Note that
\begin{align*}
u(\cdot,t^n)-\tu_H^{\ell,n} &= e_H(\cdot,t^n) + \be_H(\cdot,t^n) + e_H^n, \\
D_{\Delta t}u - D_{\Delta t}\tu_H^{\ell,n} &= D_{\Delta t}e_H(\cdot,t^n) + D_{\Delta t}\be_H(\cdot,t^n) + D_{\Delta t}e_H^n
\end{align*}
for any $1 \leq n \leq N_t$. 
Furthermore, since
\begin{align*}
D_{\Delta t}e_H(\cdot,t^n) &= \Delta t^{-1} \int_{t^{n-1}}^{t^n} \partial_t e_H(\cdot,\tau) \,\dd\tau, \qquad
D_{\Delta t}\be_H(\cdot,t^n) = \Delta t^{-1} \int_{t^{n-1}}^{t^n} \partial_t\be_H(\cdot,\tau) \,\dd\tau,
\end{align*}
we have
\begin{align*}
\| \Pi_H D_{\Delta t}e_H(\cdot,t^n) \|_0 &\leq \| \Pi_H \partial_t e_H \|_{\infty,0}, \qquad
\| \Pi_H D_{\Delta t}\be_H(\cdot,t^n) \|_0 \leq \| \Pi_H \partial_t \be_H \|_{\infty,0}.
\end{align*}
Finally, from Corollary \ref{cor:errsemidisc}, Corollary \ref{cor:locerr}, and Corollary \ref{cor:tempErr}, it follows that
\begin{align*}
\|\Pi_H\partial_te_H\|_{\infty,0} + \| e_H\|_{\infty,1} &\leq CH^2(\| f \|_{\infty,1} + T \| \partial_t f \|_{\infty,1} + T \| \partial_t^2 f \|_{\infty,0} + T^2 \| \partial_t^3 f \|_{\infty,0}), \\
\|\Pi_H\partial_t\be_H\|_{\infty,0} + \|\be_H\|_{\infty,1} &\leq  C\, \ell^{d/2}\exp(-c_\mathrm{dec}\ell) (T\| f \|_{\infty,0} + T^2 \| \partial_t f \|_{\infty,0}), \\
\max_{1\leq n\leq N_t}\|\Pi_H D_{\Delta t}e_H^n \|_0 + \max_{0\leq n\leq N_t} \|e_H^n\|_1 &\leq C\Delta t^2 T^2 \| \partial_t^3 f \|_{\infty,0}.
\end{align*}
The theorem now follows from the triangle inequality and the estimates above.
\end{proof}

\section{Numerical examples}\label{s:numerics}

In this section, we present numerical experiments to confirm the theoretical results shown in the previous subsections. The computations are performed with Python using an adapted version of the software \texttt{gridlod}~\cite{HelK19}. 

Before we turn to the experiments, we first discuss a last important aspect in order to obtain a fully practical method in the next subsection.

\subsection{Discretisation at the fine scale}

It is important to note that we have assumed so far that the corrections $\cQ_e^{\ell}v_H$, for $e \in \cT_H$ and $v_H \in U_H$, as well the global correction $\cQ^\ell v_H$ can be computed exactly. In practice, however, the corresponding problems given in~\eqref{eq:loccor} need to be discretised as well. A very straight-forward approach in this regard is to replace the space $H^1_0(\Omega)$ in the above construction by a classical first-order finite element space $U_h$ with some fine mesh size $h$ that resolves oscillations of the coefficients. We emphasize that the above error analysis follows verbatim. More precisely, let $u_h$ be the solution of~\eqref{eq:FEM} on the scale $h$ and let $u_{H,h}^{\ell,n},\,n\geq 0$ be the solution of~\eqref{eq:MLLODfull} where the corresponding correction operator $\cS_{H,h}^\ell$ is defined in the kernel space $W_{H,h} = \ker \Pi_H\vert_{U_h}$. Then the result in Theorem~\ref{thm:fullerr} instead provides an error estimate of the form
\begin{equation*}
\|D_{\Delta t}u_h(\cdot,t^n) - D_{\Delta t}u_{H,h}^{\ell,n}\|_0 + \|u_h(\cdot,t^n) - \cS_{H,h}^{\ell}u_{H,h}^{\ell,n}\|_1
\leq C\,(H^2 + \Delta t^2 + \ell^{d/2}\exp(-c_\mathrm{dec}\ell)) \cdot \texttt{data}.
\end{equation*}
This is the error that we measure for the experiments in the following subsections. That is, we investigate the error between the LOD solution in~\eqref{eq:MLLODfull} and a fine enough finite element approximation. 

\subsection{Example 1: random coefficients}

\begin{figure}
	\centering
	\scalebox{.85}{
\begin{tikzpicture}

\begin{axis}[
legend cell align={left},
legend style={fill opacity=0.8, draw opacity=1, text opacity=1, at={(0.03,0.97)}, anchor=north west, draw=white!80!black},
legend pos = south east,
log basis x={10},
log basis y={10},
tick align=outside,
tick pos=left,
x grid style={white!69.0196078431373!black},
xlabel={mesh size \(\displaystyle H\)},
xmin=0.0131390064883393, xmax=0.594603557501361,
xmode=log,
xtick style={color=black},
y grid style={white!69.0196078431373!black},
ylabel={rel error in $\|\cdot\|_{\infty,1}$},
ymin=0.000163412946118649, ymax=1.11960669956248,
ymode=log,
ytick style={color=black}
]
\addplot [semithick, myBlue, mark=square, mark size=1, mark options={solid}]
table {%
0.5 0.471127149256371
0.25 0.137076430876148
0.125 0.035182123308742
0.0625 0.0091818255320835
0.03125 0.00400344971604165
0.015625 0.00403187089477385
};
\addlegendentry{\scriptsize LOD$_\beta$ $\ell = 2$}
\addplot [semithick, myRed, mark=triangle, mark size=1, mark options={solid}]
table {%
0.5 0.753895215861838
0.25 0.258272270679268
0.125 0.0685412395084464
0.0625 0.0171970554428598
0.03125 0.00456840823588087
0.015625 0.00407042635275452
};
\addlegendentry{\scriptsize MLLOD$_\beta$ $\ell = 2$}
\addplot [semithick, myBlue, mark=square, mark size=2.5, mark options={solid}]
table {%
0.5 0.471127149256371
0.25 0.135855516467263
0.125 0.0345315387064656
0.0625 0.00856182678721979
0.03125 0.00189561665721857
0.015625 0.00166863328027615
};
\addlegendentry{\scriptsize LOD$_\beta$ $\ell = 3$}
\addplot [semithick, myRed, mark=triangle, mark size=3, mark options={solid}]
table {%
0.5 0.753895215861838
0.25 0.258962327832991
0.125 0.069262986807045
0.0625 0.0174326823973546
0.03125 0.0044391340170682
0.015625 0.00174500954917601
};
\addlegendentry{\scriptsize MLLOD$_\beta$ $\ell = 3$}
\addplot [semithick, myBlue, mark=square, mark size=4, mark options={solid}]
table {%
0.5 0.471127149256371
0.25 0.135855516467263
0.125 0.0343928535767906
0.0625 0.00858335390635077
0.03125 0.00195679660081537
0.015625 0.000343744057087114
};
\addlegendentry{\scriptsize LOD$_\beta$ $\ell = 4$}
\addplot [semithick, myRed, mark=triangle, mark size=5, mark options={solid}]
table {%
0.5 0.753895215861838
0.25 0.258962327832991
0.125 0.0693420826409513
0.0625 0.0174894948596606
0.03125 0.00448530080021982
0.015625 0.00122096084832821
};
\addlegendentry{\scriptsize MLLOD$_\beta$ $\ell = 4$}
\addplot [semithick, black, dashed]
table {%
	0.5 0.25
	0.25 0.0625
	0.125 0.015625
	0.0625 0.00390625
	0.03125 0.0009765625
	0.015625 0.000244140625
};
\addlegendentry{\scriptsize order 2}
\end{axis}

\end{tikzpicture}
	}
	\scalebox{.85}{
\begin{tikzpicture}

\begin{axis}[
legend cell align={left},
legend style={fill opacity=0.8, draw opacity=1, text opacity=1, at={(0.03,0.97)}, anchor=north west, draw=white!80!black},
legend pos = south east,
log basis x={10},
log basis y={10},
tick align=outside,
tick pos=left,
x grid style={white!69.0196078431373!black},
xlabel={mesh size \(\displaystyle H\)},
xmin=0.0131390064883393, xmax=0.594603557501361,
xmode=log,
xtick style={color=black},
y grid style={white!69.0196078431373!black},
ylabel={rel error in $\|\cdot\|_{\infty,1}$},
ymin=0.000163412946118649, ymax=1.11960669956248,
ymode=log,
ytick style={color=black}
]
\addplot [thick, myBlue, mark=square, mark size=4, mark options={solid}]
table {%
0.5 0.473306973969103
0.25 0.135645142066204
0.125 0.0379196557486266
0.0625 0.0158923086255158
0.03125 0.00937487252274897
0.015625 0.00551241296893147
};
\addlegendentry{\scriptsize LOD $\ell = 4$}
\addplot [thick, myRed, mark=triangle, mark size=4, mark options={solid}]
table {%
0.5 0.754429306289769
0.25 0.25744931651557
0.125 0.0692414107688945
0.0625 0.0186920876335358
0.03125 0.00939808116908686
0.015625 0.00555346056613528
};
\addlegendentry{\scriptsize MLLOD $\ell = 4$}
\addplot [thick, myGreen, mark=o, mark size=4, mark options={solid}]
table {%
0.5 0.561654326815114
0.25 0.293722093015143
0.125 0.204123521816957
0.0625 0.175963666188048
0.03125 0.155558264211382
0.015625 0.0730269721256925
};
\addlegendentry{\scriptsize FEM}
\addplot [semithick, black, dashed]
table {%
	0.5 0.5
	0.25 0.25
	0.125 0.125
	0.0625 0.0625
	0.03125 0.03125
	0.015625 0.015625
};
\addplot [semithick, black, dashed]
table {%
	0.5 0.25
	0.25 0.0625
	0.125 0.015625
	0.0625 0.00390625
	0.03125 0.0009765625
	0.015625 0.000244140625
};
\addlegendentry{\scriptsize order 1 and 2}
\end{axis}

\end{tikzpicture}
	}
	\caption{\small Relative errors for Example~1 with respect to $H$ for different spatial discretisation approaches. Left: Correctly weighted LOD approach and its lumped version. Right: Classical finite element approximation and naively weighted LOD approach (both lumped and non-lumped).}
	\label{fig:ex1}
\end{figure}
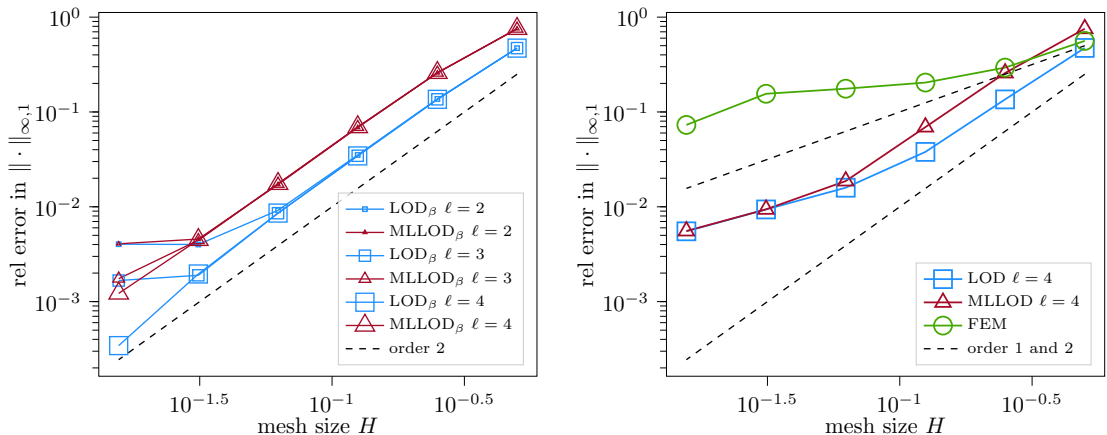

For the first experiment, we consider coefficients $\alpha$ and $\beta$ that are obtained as independently and uniformly distributed random values on a finite element mesh $\cT_\varepsilon$ with mesh size $\varepsilon = 2^{-6}$. The values are chosen in $[1,2.5]$ and $[0.5,4]$ for $\alpha$ and $\beta$, respectively, and the actual samples are depicted in Figure~\ref{fig:coeff1}. We choose the right-hand side
\begin{equation*}
f(x,t) = \sin(\pi\,x_1)\,\sin(\pi\,x_2)\,\cos(0.5\pi\,t),
\end{equation*}
the final time $T = 1$, and compute a reference solution with a classical leapfrog scheme combined with a first-order finite element method on a reference mesh $\cT_h$ with mesh size $h = 2^{-7}$ that resolves the fine oscillations in the coefficients. Although the function $f$ does not fulfil the condition~\eqref{eq:regcond_f}, this is not critical for the convergence behavior as emphasized in Remark~\ref{rem:initial}.
The step size is chosen as $\Delta t = 0.25\,h$, which is a rather sharp condition for the stability of the method. The same relation is also employed for the mass lumped LOD method. The relative errors in the $L^\infty(0,T;H^1(\Omega))$ norm for this approach are presented in Figure~\ref{fig:ex1} (left, {\protect \tikz{ \draw[line width=1.5pt, myRed] (0,0) -- (0.13,0.23) -- (0.26, 0) -- cycle;}} ) for different values of $\ell$. The curves show the expected second-order convergence with a commencing stagnation if $\ell$ is not increased appropriately. For comparison, we also present the errors for the method without mass lumping in Figure~\ref{fig:ex1} (left, {\protect \tikz{ \draw[line width=1.5pt, myBlue] (0,0) rectangle (0.23,0.23);}}). The method behaves similarly but overall leads to slightly smaller errors. 
To investigate the speed-up that can be achieved with the lumping strategy, we present computation times (in seconds) for different values of $H$ in the following table.\\[-2ex]
\newcolumntype{P}{R{1.4cm}}
\begin{center}
\begin{tabular}{rPPP}
	$H$ & $2^{-4}$ & $2^{-5}$ & $2^{-6}$ 
	\\\toprule
	offline assembly & 111.510 & 441.733 & 1819.002
	\\\midrule
	lumped LOD$_\beta$ (online) & 0.008 & 0.038 & 0.417
	\\
	non-lumped LOD$_\beta$ (online) & 0.267 & 4.561 & 73.120
	\\
	\textbf{speed-up factor} & 33.375 & 120.026 & 175.348
\end{tabular}\\[1.5ex]
\end{center}
Both the non-lumped and the lumped version need to compute LOD matrices in an \emph{offline phase} and therefore require similar computation times. The main advantage of the lumped version, however, is in the \emph{online phase}, because of its fully explicit nature. There, the lumped method achieves a significant speed-up compared to the non-lumped method and is therefore especially beneficial when simulating on longer time intervals or for multiple different right-hand sides for instance.

To illustrate the advantage of our approach with an appropriate weighting of $\beta$ in the definition of the interpolation operator $\Pi_H$, we further present the errors for a naive LOD approach without a coefficient-dependent averaging. In Figure~\ref{fig:ex1} (right) we present the errors obtained with ({\protect \tikz{ \draw[line width=1.5pt, myRed] (0,0) -- (0.13,0.23) -- (0.26, 0) -- cycle;}}) and without ({\protect \tikz{ \draw[line width=1.5pt, myBlue] (0,0) rectangle (0.23,0.23);}}) mass lumping as well as with a standard finite element approach ({\protect \tikz{ \draw[line width=1.5pt, myGreen] circle (0.6ex);}}). One observes that the finite element method shows a stagnation of the error in the \emph{pre-asymptotic} regime where the oscillations of the coefficients are not yet resolved. The results of the two multiscale approaches first indicate a second-order rate for coarse mesh sizes but then show a reduced convergence rate of only order 1 due to the sub-optimal averaging strategy. 

\subsection{Example 2: structured coefficients} 

\begin{figure}
	\centering
	\scalebox{0.92}{
\begin{tikzpicture}

\begin{axis}[
colorbar,
colorbar style={ylabel={}},
colormap/viridis,
hide x axis,
hide y axis,
point meta max=18,
point meta min=1,
tick align=outside,
tick pos=left,
x grid style={white!69.0196078431373!black},
xmin=-0.5, xmax=127.5,
xtick style={color=black},
y grid style={white!69.0196078431373!black},
ymin=-0.5, ymax=127.5,
ytick style={color=black}
]
\addplotgraphicsnatural
[includegraphics cmd=\pgfimage,xmin=-0.5, xmax=127.5, ymin=-0.5, ymax=127.5] {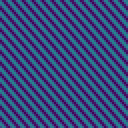};
\end{axis}

\end{tikzpicture}
	}
	\hspace{2em}
	\scalebox{0.92}{
\begin{tikzpicture}

\begin{axis}[
colorbar,
colorbar style={ylabel={}},
colormap/viridis,
hide x axis,
hide y axis,
point meta max=18,
point meta min=1,
tick align=outside,
tick pos=left,
x grid style={white!69.0196078431373!black},
xmin=-0.5, xmax=127.5,
xtick style={color=black},
y grid style={white!69.0196078431373!black},
ymin=-0.5, ymax=127.5,
ytick style={color=black}
]
\addplotgraphicsnatural
[includegraphics cmd=\pgfimage,xmin=-0.5, xmax=127.5, ymin=-0.5, ymax=127.5] {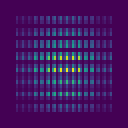};
\end{axis}

\end{tikzpicture}
	}
	\caption
	{\small Coefficients $\alpha$ (left) and $\beta$ (right) for Example~2.} 
	\label{fig:coeff2}
\end{figure}

In a second example, we investigate the behavior of the method for more structured coefficients $\alpha$ and $\beta$ as depicted in Figure~\ref{fig:coeff2}. Again, these coefficients are piece-wise constant on a mesh $\cT_\varepsilon$ with $\varepsilon = 2^{-6}$. We set $T=1$,
\begin{equation*}
f(x,t) = \sin(3\pi\,x_1)\,x_2(1-x_2)\,t^2,
\end{equation*}
and we compute a reference solution on the scale $h = 2^{-7}$. The CFL condition is slightly stronger for this example due to the larger bounds of the coefficient and the maximal choice for the time step size therefore reads $\Delta t = 0.15\,H$ for some given mesh size $H$.
The results are very similar to the ones in Example~1. With a correct weighting in the definition of the interpolation operator $\Pi_H$ and correctly adjusted localization parameter $\ell$, both the lumped and the non-lumped version of the LOD approach again show a second-order convergence behaviour (cf.~Figure~\ref{fig:ex2} (left)), while the $\beta$-independent definition of $\Pi_H$ leads to a reduced convergence rate, already for coarse mesh sizes (cf.~Figure~\ref{fig:ex2} (right)). Moreover, the finite element approach again shows basically no improvement when decreasing $H$ and only starts to show a linear behaviour when the scale on which the coefficients vary is resolved.

\subsection{Example 3: larger contrast}	

Our theoretical results include a dependence of the constants~$C$ and~$c_\mathrm{dec}$ in~\eqref{eq:finalerr} on the contrasts $\max\{\alpha_{\max},\beta_{\max}\}/\min\{\alpha_{\min},\beta_{\min}\}$ and $\alpha_{\max}/\alpha_{\min}$, respectively. To investigate how the mass lumped LOD approach performs for larger contrast, we revisit Example~1 and rescale the coefficients $\alpha$ and $\beta$ (cf.~Figure~\ref{fig:coeff1}) both to the range $[0.01,100]$.  
In this case, the CFL condition reads $\Delta t = 0.01\, H$. The corresponding errors are presented in Figure~\ref{fig:ex3}. 
The results indicate the same convergence rates as for Example~1 in Figure~\ref{fig:ex1} and the errors are roughly of the same size, indicating that the dependence of the error on the contrasts as mentioned in Theorem~\ref{thm:fullerr} is most likely not severe. Concerning the exponentially decaying localization term, one can observe that the threshold where the localization error dominates appears to be larger than in Figure~\ref{fig:ex1}. However, an increase of $\ell$ compared to Example~1 seems not to be required to preserve the second-order convergence rates in this case. 

\section{Conclusion}\label{s:conclusion}

In this work, we have introduced and analysed a special coefficient-dependent mass lumping strategy that, combined with the multiscale technique Localized Orthogonal Decomposition, leads to a method with optimal second-order convergence rates in space and time while being fully explicit. In the construction of both the multiscale approach and the diagonalization strategy of the mass matrix, the key feature is the use of an appropriate interpolation operator. We have provided a full error analysis of the mass lumped multiscale method and investigated its performance in numerical experiments. 

\begin{figure}
	\centering
	\scalebox{.85}{
\begin{tikzpicture}

\begin{axis}[
legend cell align={left},
legend style={fill opacity=0.8, draw opacity=1, text opacity=1, at={(0.03,0.97)}, anchor=north west, draw=white!80!black},
legend pos = south east,
log basis x={10},
log basis y={10},
tick align=outside,
tick pos=left,
x grid style={white!69.0196078431373!black},
xlabel={mesh size \(\displaystyle H\)},
xmin=0.0131390064883393, xmax=0.594603557501361,
xmode=log,
xtick style={color=black},
y grid style={white!69.0196078431373!black},
ylabel={rel error in $\|\cdot\|_{\infty, 1}$},
ymin=0.000155799411140302, ymax=3.04929256781561,
ymode=log,
ytick style={color=black}
]
\addplot [semithick, myBlue, mark=square, mark size=1, mark options={solid}]
table {%
0.5 1.03310232813742
0.25 0.397061380273217
0.125 0.149281073480482
0.0625 0.0337882926020129
0.03125 0.013668553350683
0.015625 0.00924003882989052
};
\addlegendentry{\scriptsize LOD$_\beta$ $\ell = 2$}
\addplot [semithick, myRed, mark=triangle, mark size=1, mark options={solid}]
table {%
0.5 1.51577524729986
0.25 0.785559626219533
0.125 0.143595284598156
0.0625 0.0435535328393217
0.03125 0.0153583238803019
0.015625 0.0101412892997338
};
\addlegendentry{\scriptsize MLLOD$_\beta$ $\ell = 2$}
\addplot [semithick, myBlue, mark=square, mark size=2.5, mark options={solid}]
table {%
0.5 1.03310232813742
0.25 0.39701665582728
0.125 0.147992428065542
0.0625 0.0324084964256991
0.03125 0.00787215831725271
0.015625 0.00319911163833066
};
\addlegendentry{\scriptsize LOD$_\beta$ $\ell = 3$}
\addplot [semithick, myRed, mark=triangle, mark size=3, mark options={solid}]
table {%
0.5 1.51577524729986
0.25 0.786474259694586
0.125 0.142172931387324
0.0625 0.0437387452099015
0.03125 0.0113985366219013
0.015625 0.00555117707552303
};
\addlegendentry{\scriptsize MLLOD$_\beta$ $\ell = 3$}
\addplot [semithick, myBlue, mark=square, mark size=4, mark options={solid}]
table {%
0.5 1.03310232813742
0.25 0.39701665582728
0.125 0.148031372983917
0.0625 0.0322046584474502
0.03125 0.00775420583989854
0.015625 0.00144827901933979
};
\addlegendentry{\scriptsize LOD$_\beta$ $\ell = 4$}
\addplot [semithick, myRed, mark=triangle, mark size=5, mark options={solid}]
table {%
0.5 1.51577524729986
0.25 0.786474259694586
0.125 0.142075367377604
0.0625 0.0436838733827158
0.03125 0.0114376404571089
0.015625 0.00468353457868396
};
\addlegendentry{\scriptsize MLLOD$_\beta$ $\ell = 4$}
\addplot [semithick, black, dashed]
table {%
0.5 0.25
0.25 0.0625
0.125 0.015625
0.0625 0.00390625
0.03125 0.0009765625
0.015625 0.000244140625
};
\addlegendentry{\scriptsize order 2}
\end{axis}

\end{tikzpicture}
	}
	\scalebox{.85}{
\begin{tikzpicture}

\begin{axis}[
legend cell align={left},
legend style={fill opacity=0.8, draw opacity=1, text opacity=1, at={(0.03,0.97)}, anchor=north west, draw=white!80!black},
legend pos = south east,
log basis x={10},
log basis y={10},
tick align=outside,
tick pos=left,
x grid style={white!69.0196078431373!black},
xlabel={mesh size \(\displaystyle H\)},
xmin=0.0131390064883393, xmax=0.594603557501361,
xmode=log,
xtick style={color=black},
y grid style={white!69.0196078431373!black},
ylabel={rel error in $\|\cdot\|_{\infty, 1}$},
ymin=0.000155799411140302, ymax=3.04929256781561,
ymode=log,
ytick style={color=black}
]
\addplot [thick, myBlue, mark=square, mark size=4, mark options={solid}]
table {%
0.5 1.10070955624898
0.25 0.377269202917797
0.125 0.178973597277727
0.0625 0.068338229205322
0.03125 0.0450177913905213
0.015625 0.0188621808860656
};
\addlegendentry{\scriptsize LOD $\ell = 4$}
\addplot [thick, myRed, mark=triangle, mark size=4, mark options={solid}]
table {%
0.5 1.5997494511748
0.25 0.727173914855425
0.125 0.169391458855053
0.0625 0.0731538788228532
0.03125 0.0457358917678072
0.015625 0.0193740556078491
};
\addlegendentry{\scriptsize MLLOD $\ell = 4$}
\addplot [thick, myGreen, mark=o, mark size=4, mark options={solid}]
table {%
0.5 1.10228599214564
0.25 0.607252181424766
0.125 0.519776462743556
0.0625 0.461938800694026
0.03125 0.381209889198713
0.015625 0.165183054848447
};
\addlegendentry{\scriptsize FEM}
\addplot [semithick, black, dashed]
table {%
	0.5 0.5
	0.25 0.25
	0.125 0.125
	0.0625 0.0625
	0.03125 0.03125
	0.015625 0.015625
};
\addplot [semithick, black, dashed]
table {%
0.5 0.25
0.25 0.0625
0.125 0.015625
0.0625 0.00390625
0.03125 0.0009765625
0.015625 0.000244140625
};
\addlegendentry{\scriptsize order 1 and 2}
\end{axis}

\end{tikzpicture}
	}
	\caption{\small Relative errors for Example~2 with respect to $H$ for different spatial discretisation approaches. Left: Correctly weighted LOD approach and its lumped version. Right: Classical finite element approximation and naively weighted LOD approach (both lumped and non-lumped).}
	\label{fig:ex2}
\end{figure}
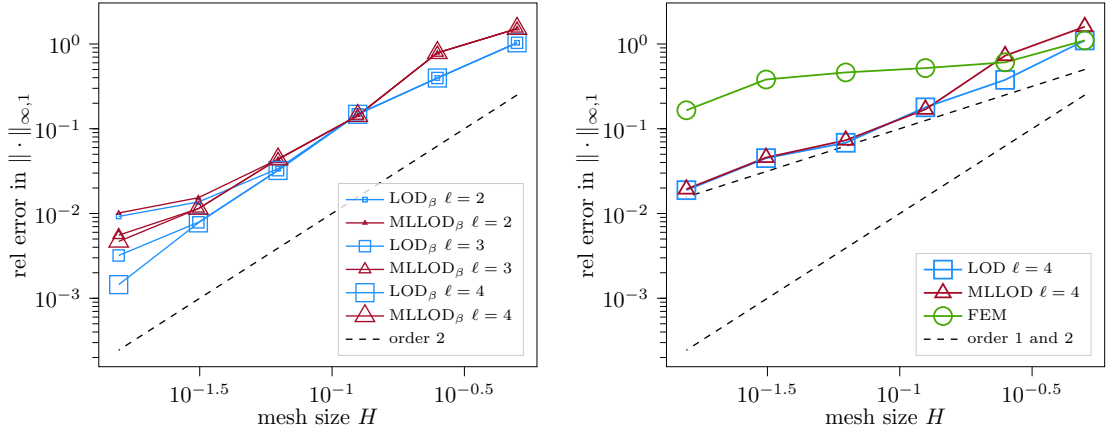

\subsection*{Acknowledgement} 
Sjoerd Geevers has been supported by the Austrian Science Fund (FWF) through the project M 2949-N.
Roland Maier gratefully acknowledges support by the G\"oran Gustafsson Foundation for Research in Natural Sciences and Medicine.

\begin{figure}
	\centering
	\scalebox{.85}{
\begin{tikzpicture}

\begin{axis}[
legend cell align={left},
legend style={fill opacity=0.8, draw opacity=1, text opacity=1, at={(0.03,0.97)}, anchor=north west, draw=white!80!black},
legend pos = south east,
log basis x={10},
log basis y={10},
tick align=outside,
tick pos=left,
x grid style={white!69.0196078431373!black},
xlabel={mesh size \(\displaystyle H\)},
xmin=0.0131390064883393, xmax=0.594603557501361,
xmode=log,
xtick style={color=black},
y grid style={white!69.0196078431373!black},
ylabel={rel error in $\|\cdot\|_{\infty,1}$},
ymin=0.000163412946118649, ymax=1.31960669956248,
ymode=log,
ytick style={color=black}
]
\addplot [semithick, myBlue, mark=square, mark size=1, mark options={solid}]
table {%
0.5 0.472146873273859
0.25 0.138765701219376
0.125 0.0359163602592009
0.0625 0.0105278742652093
0.03125 0.00772360107739184
0.015625 0.0150940553567211
};
\addlegendentry{\scriptsize LOD$_\beta$ $\ell = 2$}
\addplot [semithick, myRed, mark=triangle, mark size=1, mark options={solid}]
table {%
0.5 0.81516840996814
0.25 0.279892337751307
0.125 0.0750212706745128
0.0625 0.0185594112082872
0.03125 0.00775111799090609
0.015625 0.0151669884635794
};
\addlegendentry{\scriptsize MLLOD$_\beta$ $\ell = 2$}
\addplot [semithick, myBlue, mark=square, mark size=2.5, mark options={solid}]
table {%
0.5 0.472146873273859
0.25 0.137687899644363
0.125 0.0351819258937354
0.0625 0.00929256237423465
0.03125 0.00256516091214096
0.015625 0.00604751380737312
};
\addlegendentry{\scriptsize LOD$_\beta$ $\ell = 3$}
\addplot [semithick, myRed, mark=triangle, mark size=3, mark options={solid}]
table {%
0.5 0.81516840996814
0.25 0.280627719414843
0.125 0.0759001244615391
0.0625 0.0192040999135074
0.03125 0.0052975937533368
0.015625 0.00614079974622849
};
\addlegendentry{\scriptsize MLLOD$_\beta$ $\ell = 3$}
\addplot [semithick, myBlue, mark=square, mark size=4, mark options={solid}]
table {%
0.5 0.472146873273859
0.25 0.137687899644363
0.125 0.0350662549400256
0.0625 0.00928117204025722
0.03125 0.00243104567519143
0.015625 0.00128135231877693
};
\addlegendentry{\scriptsize LOD$_\beta$ $\ell = 4$}
\addplot [semithick, myRed, mark=triangle, mark size=5, mark options={solid}]
table {%
0.5 0.81516840996814
0.25 0.280627719414843
0.125 0.075979465842555
0.0625 0.0192783919034896
0.03125 0.00541650705697636
0.015625 0.00205012304021143
};
\addlegendentry{\scriptsize MLLOD$_\beta$ $\ell = 4$}
\addplot [semithick, black, dashed]
table {%
	0.5 0.25
	0.25 0.0625
	0.125 0.015625
	0.0625 0.00390625
	0.03125 0.0009765625
	0.015625 0.000244140625
};
\addlegendentry{\scriptsize order 2}
\end{axis}

\end{tikzpicture}
	}
	\scalebox{.85}{
\begin{tikzpicture}

\begin{axis}[
legend cell align={left},
legend style={fill opacity=0.8, draw opacity=1, text opacity=1, at={(0.03,0.97)}, anchor=north west, draw=white!80!black},
legend pos = south east,
log basis x={10},
log basis y={10},
tick align=outside,
tick pos=left,
x grid style={white!69.0196078431373!black},
xlabel={mesh size \(\displaystyle H\)},
xmin=0.0131390064883393, xmax=0.594603557501361,
xmode=log,
xtick style={color=black},
y grid style={white!69.0196078431373!black},
ylabel={rel error in $\|\cdot\|_{\infty,1}$},
ymin=0.000163412946118649, ymax=1.31960669956248,
ymode=log,
ytick style={color=black}
]
\addplot [thick, myBlue, mark=square, mark size=4, mark options={solid}]
table {%
0.5 0.475085210904259
0.25 0.138237817068249
0.125 0.0420004998712294
0.0625 0.0219282661443159
0.03125 0.0141308834840253
0.015625 0.00959849692864377
};
\addlegendentry{\scriptsize LOD $\ell = 4$}
\addplot [thick, myRed, mark=triangle, mark size=4, mark options={solid}]
table {%
0.5 0.814873451061285
0.25 0.278348394776289
0.125 0.0753122705692571
0.0625 0.0222448747765649
0.03125 0.0149059198508067
0.015625 0.0103365053852925 
};
\addlegendentry{\scriptsize MLLOD $\ell = 4$}
\addplot [thick, myGreen, mark=o, mark size=4, mark options={solid}]
table {%
0.5 0.678171061268977
0.25 0.489457068209658
0.125 0.445005727623949
0.0625 0.430955695319444
0.03125 0.402454305704023
0.015625 0.203305840030885 
};
\addlegendentry{\scriptsize FEM}
\addplot [semithick, black, dashed]
table {%
	0.5 0.5
	0.25 0.25
	0.125 0.125
	0.0625 0.0625
	0.03125 0.03125
	0.015625 0.015625
};
\addplot [semithick, black, dashed]
table {%
	0.5 0.25
	0.25 0.0625
	0.125 0.015625
	0.0625 0.00390625
	0.03125 0.0009765625
	0.015625 0.000244140625
};
\addlegendentry{\scriptsize order 1 and 2}
\end{axis}

\end{tikzpicture}
	}
	\caption{\small Relative errors for Example~3 with respect to $H$ for different spatial discretisation approaches. Left: Correctly weighted LOD approach and its lumped version. Right: Classical finite element approximation and naively weighted LOD approach (both lumped and non-lumped).}
	\label{fig:ex3}
\end{figure}
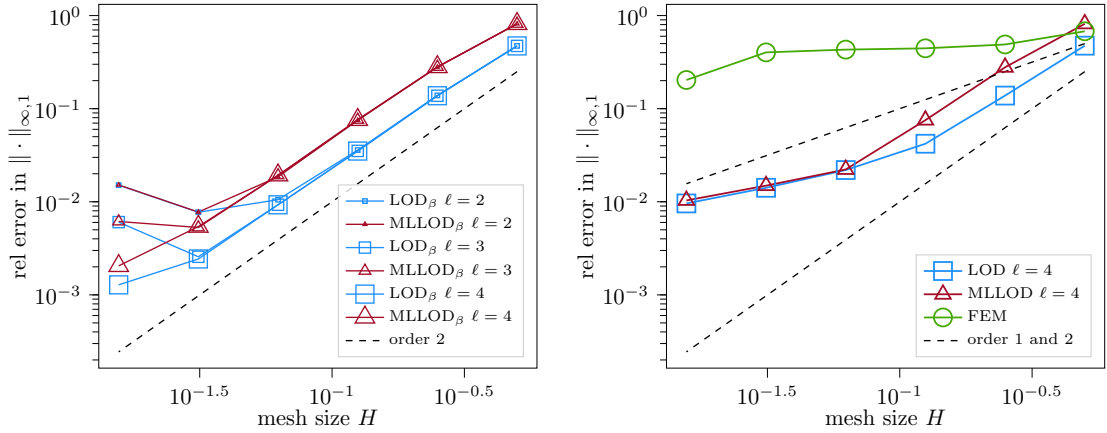

\end{document}